\documentclass{amsart}
\usepackage[dvipsnames]{xcolor}
\usepackage{amssymb}
\usepackage{mathrsfs}
\usepackage{tikz-cd}
\usepackage{graphicx}
\usepackage{mathtools}
\usepackage{hyperref}
\usepackage{array}
\usepackage{microtype}
\usepackage{pinlabel}

	\newtheorem{theorem}[subsection]{Theorem}
	\newtheorem{corollary}[subsection]{Corollary}
	\newtheorem{proposition}[subsection]{Proposition}
	\newtheorem{lemma}[subsection]{Lemma}
\theoremstyle{definition}
	\newtheorem{construction}[subsection]{Construction}
	\newtheorem{notation}[subsection]{Notation}
	\newtheorem{example}[subsection]{Example}
	\newtheorem{definition}[subsection]{Definition}
\theoremstyle{remark}
	\newtheorem{remark}[subsection]{Remark}

\DeclareFontFamily{U}{matha}{}
\DeclareFontShape{U}{matha}{m}{n}{
  <-5.5>    matha5
  <5.5-6.5> matha6 
  <6.5-7.5> matha7
  <7.5-8.5> matha8
  <8.5-9.5> matha9
  <9.5-11>  matha10
  <11->     matha12
}{}
\DeclareSymbolFont{matha}{U}{matha}{m}{n}
\DeclareFontSubstitution{U}{matha}{m}{n}
\DeclareFontFamily{U}{mathx}{\hyphenchar\font45}
\DeclareFontShape{U}{mathx}{m}{n}{<-> mathx10}{}
\DeclareSymbolFont{mathx}{U}{mathx}{m}{n}
\DeclareFontSubstitution{U}{mathx}{m}{n}

\DeclareMathDelimiter{\ldbrack}{4}{matha}{"76}{mathx}{"30}
\DeclareMathDelimiter{\rdbrack}{5}{matha}{"77}{mathx}{"38}

\newcommand{\initialE}{\varnothing}
\renewcommand{\emptyset}{\varnothing}

\DeclareMathOperator{\id}{id}

\newcommand{\shortdot}{{\cdot}}

\newcommand{\coloredopfont}{\mathscr} 
\newcommand{\CoCo}{\mathfrak{CoCo}}
\newcommand{\as}{\mathscr{A}\mkern-3mu{s}}

\newcommand{\underlying}[1]{\lvert #1\rvert}

\newcommand{\liststyle}{\underline}

\newcommand{\freeq}{\mathcal{F}_{{\calQ}}}
\newcommand{\colorsa}{\mathsf{A}}
\newcommand{\colorsb}{\mathsf{B}}
\newcommand{\colorsc}{\mathsf{C}}
\newcommand{\colorsd}{\mathsf{D}}
\newcommand{\algebras}[1]{#1\text{-}\mathrm{alg}}
\newcommand{\bimod}[2]{#1\text{-}\mathrm{mod}\text{-}#2}

\newcommand{\bv}{\mathcal{BV}}
\newcommand{\ger}{\mathcal{G}}

\newcommand{\twohoms}[2]{[#1, #2]}
\newcommand{\threehoms}[3]{\ldbrack #2, #3 \rdbrack}

\newcommand{\initial}[1]{\mathring{#1}}

\newcommand{\calQ}{\mkern-1.25mu\mathcal{Q}}
\newcommand{\initialQ}{\mkern-1.25mu\initial{\mathcal{Q}}}

\newcommand{\cp}{\mathbin{\mkern-.5mu\triangleleft}}
\newcommand{\vprof}[2]{\tbinom{#1}{#2}}

\newcommand{\oprm}{\mathrm{op}}
\newcommand{\ua}{\liststyle{a}}
\newcommand{\ub}{\liststyle{b}}
\newcommand{\uc}{\liststyle{c}}

\newcommand{\uu}{\liststyle{u}}
\newcommand{\uv}{\liststyle{v}}
\newcommand{\uw}{\liststyle{w}}
\newcommand{\listsopa}{\mathsf{S}_{\colorsa}^{\oprm}}
\newcommand{\listsopb}{\mathsf{S}_{\colorsb}^{\oprm}}

\newcommand{\listsa}{\mathsf{S}_{\colorsa}}
\newcommand{\listsb}{\mathsf{S}_{\colorsb}}
\newcommand{\listsc}{\mathsf{S}_{\colorsc}}

\newcommand{\obj}{\mathrm{obj}}
\newcommand{\alg}{\mathrm{alg}}

\newcommand{\ml}{\smash{\mathrm{mul}}\vphantom{non}}
\newcommand{\notml}{\mathrm{non}}

\DeclareMathOperator{\nbhd}{nb}

\title{Coextension of scalars in operad theory}
\author[G.~C.~Drummond-Cole]{Gabriel C. Drummond-Cole}
\thanks{The first author was supported by IBS-R003-D1.}
\address{Center for Geometry and Physics, Institute for Basic Science (IBS), Pohang 37673, Republic of Korea}
\email{gabriel.c.drummond.cole@gmail.com}
\urladdr{https://drummondcole.com/gabriel/academic/}
\author[P.~Hackney]{Philip Hackney}
\thanks{The second author acknowledges the support of Australian Research Council Discovery Project grant DP160101519.}
\address{Department of Mathematics\\University of Louisiana at Lafayette\\ Lafayette, LA, United States of America}
\email{philip@phck.net} 
\urladdr{http://phck.net}
\keywords{adjoint functors, operads, colored operads}
\subjclass[2020]{%
18M60,
18A40, 
55P48,
18M85}
\date{June 28, 2019; revised August 10, 2021}

\begin{document}
\begin{abstract}
The functor between operadic algebras given by restriction along an operad map generally has a left adjoint.
We give a necessary and sufficient condition for the restriction functor to admit a right adjoint. 
The condition is a factorization axiom which roughly says that operations in the codomain operad can be written essentially uniquely as operations in arity one followed by operations in the domain operad.
\end{abstract}
\maketitle

\section{Introduction}
A map between (potentially colored) operads yields an associated restriction functor between their respective categories of algebras. The restriction functor is right adjoint to a functor which is a free extension along the operad map. 
Usually the restriction functor is not a \emph{left} adjoint. 
However, in some interesting examples, it is. 
Here we provide a necessary and sufficient condition for the existence of a right adjoint, and give an explicit construction of the right adjoint in the case that the criterion is satisfied.

The criterion is the existence and uniqueness of a certain kind of factorization. 
The main theorems below are stated in considerably more generality, but for the purposes of the introduction, we restrict to the monochrome version.
For the time being, assume that the ground category is a standard one, for instance abelian groups, topological spaces, spectra, or sets.
We use the evocative notation $\cp$ for the usual composition product of collections (see Definition~\ref{definition sub prod} and Notation~\ref{notation: cp sub Q}).
\begin{definition}
\label{definition: categorical extension, monochrome}
Let $\phi:\mathcal{P}\to {\calQ}$ be a map of (monochrome) operads.
We say $\phi$ is a \emph{monoidal extension} if the composition
\[
\mathcal{P}\cp_{\mathcal{P}(1)}{\calQ}(1)\to{\calQ}\cp_{{\calQ}}{\calQ}\cong {\calQ}
\]
is an isomorphism. 
\end{definition}
\begin{theorem}
\label{theorem: main theorem, monochrome}
Let $\phi:\mathcal{P}\to{\calQ}$ be a map of (monochrome) operads. 
The restriction functor $\phi^*$ from ${\calQ}$-algebras to $\mathcal{P}$-algebras is a left adjoint if and only if $\phi$ is a monoidal extension.
\end{theorem}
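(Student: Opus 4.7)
The plan is to prove the two implications separately. For the sufficient direction I would construct the right adjoint explicitly, modeled on the coinduction functor $\mathrm{Hom}_R(S,-)$ along a map of rings. For the necessary direction I would extract the monoidal extension isomorphism from the colimit-preservation property of any left adjoint, applied to a canonical presentation.

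For ($\Leftarrow$), given a $\mathcal{P}$-algebra $A$, define $R(A)$ to be the $\calQ$-algebra whose underlying object is the set of $\mathcal{P}(1)$-equivariant maps from $\calQ(1)$ to $A$, with $\calQ(1)$ viewed as a left $\mathcal{P}(1)$-module via $\phi$. The $\calQ$-algebra structure is dictated by the hypothesis: given $q \in \calQ(n)$ and $f_1, \ldots, f_n \in R(A)$, evaluate $q(f_1, \ldots, f_n)$ at $v \in \calQ(1)$ by choosing a factorization $v \cdot q = \phi(p)(u_1, \ldots, u_n)$ in $\calQ(n)$ (possible by monoidal extension) and setting the value to $p(f_1(u_1), \ldots, f_n(u_n))$ via the $\mathcal{P}$-algebra structure on $A$. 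Well-definedness rests on the essential uniqueness of the factorization together with the equivariance of the $f_i$. The $\calQ$-algebra axioms and the natural bijection
\[
\mathrm{Hom}_{\algebras{\calQ}}(B, R(A)) \;\cong\; \mathrm{Hom}_{\algebras{\mathcal{P}}}(\phi^* B, A),
\]
with counit $\phi^* R(A) \to A$ given by evaluation at $1 \in \calQ(1)$ and unit $B \to R(\phi^* B)$ sending $b$ to $v \mapsto v \cdot b$, then follow from direct operadic bookkeeping.

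For ($\Rightarrow$), suppose $\phi^*$ admits a right adjoint, so preserves all colimits. Consider the $\mathcal{P}$-algebra coequalizer $C$ of the two parallel maps $F_{\mathcal{P}}(\calQ(1) \otimes \mathcal{P}(1)) \rightrightarrows F_{\mathcal{P}}(\calQ(1))$ determined by the left $\mathcal{P}(1)$-action on $\calQ(1)$ and by the right $\mathcal{P}(1)$-action on $F_{\mathcal{P}}(\calQ(1))$; the underlying object of $C$ is $\mathcal{P} \cp_{\mathcal{P}(1)} \calQ(1)$. The canonical $\mathcal{P}$-algebra map $F_{\mathcal{P}}(\calQ(1)) \to \phi^* F_{\calQ}(\mathbb{1})$, transposed from the inclusion $\calQ(1) \hookrightarrow \calQ \cp \mathbb{1}$ of generators, coequalizes the parallel pair and factors through $C$, and the induced map $C \to \phi^* F_{\calQ}(\mathbb{1})$ has underlying the monoidal extension comparison. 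Recognizing $\phi^* F_{\calQ}(\mathbb{1})$ as this same coequalizer in $\algebras{\mathcal{P}}$, using that $\phi^*$ commutes with the necessary colimits from its presentation as a left adjoint, forces the comparison map to be an isomorphism.

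The main obstacle lies in the reverse direction: pinning down precisely the colimit diagram whose preservation by $\phi^*$ encodes the factorization, and transferring an isomorphism of $\mathcal{P}$-algebras to an isomorphism of underlying collections in every arity. The forward construction, by contrast, is essentially a careful exercise in operadic notation once the formula for $R(A)$ is guessed by analogy with coinduction of modules along a ring homomorphism.
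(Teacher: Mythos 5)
Your sufficiency direction is essentially the paper's: the right adjoint is coinduction $A \mapsto \twohoms{\calQ(1)}{A}^{\mathcal{P}(1)}$ with the $\calQ$-action defined by factoring $v \cdot q$ through $\mathcal{P}$, the unit is $b \mapsto (v \mapsto v\cdot b)$, and the counit is evaluation at the identity; this is exactly Construction~\ref{construction:right adjoint} and Remark~\ref{remark: adjoint formula for action} specialized to the monochrome case. The only caveat is that your argument is element-wise, so as written it covers $\mathsf{Set}$-like ground categories; the paper's diagrammatic version (via the $\underlying{\mathcal{P}}$-$\underlying{\calQ}$ bimodule $\underlying{\calQ}$ and the adjunction of Corollary~\ref{cor: underlying adjunction}) is what handles spectra and general cosmoi.

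The necessity direction has a genuine gap, and it is precisely the obstacle you flag at the end. Testing $\phi^*$ against a presentation of the free $\calQ$-algebra on \emph{one} generator cannot recover the extension morphism, only its arity-wise $\Sigma_n$-coinvariants. Concretely, the underlying object of your coequalizer $C$ is not the collection $\mathcal{P}\cp_{\mathcal{P}(1)}\calQ(1)$ but rather $\coprod_n \bigl(\mathcal{P}(n)\otimes_{\mathcal{P}(1)^{\otimes n}}\calQ(1)^{\otimes n}\bigr)_{\Sigma_n}$, and likewise the underlying object of $\phi^*F_{\calQ}(\mathbf{1})$ is $\coprod_n \calQ(n)_{\Sigma_n}$. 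A $\Sigma_n$-equivariant map inducing an isomorphism on coinvariants need not be an isomorphism (already in $\mathsf{Set}$: the unique map from a free $\Sigma_2$-orbit to a fixed point is a bijection on orbit sets), so even after completing the bookkeeping you would conclude strictly less than the monoidal extension condition. This is why the paper instead tests $\phi^*$ on the $n$-fold \emph{coproduct} of free algebras on one generator each (Lemmas~\ref{lemma: comparison map is homogeneous}--\ref{multilinear summand of comparison is extension}): the summand of $\coprod^{\alg}_i\freeq(b_i)$ in which each generator appears exactly once is $\calQ(n)\otimes_{\Sigma_n}\Sigma_n\cong\calQ(n)$ with its full symmetric group action, the comparison map respects this ``multilinear'' decomposition, and the hypothesis that coproducts in $\mathcal{E}$ are conservative (satisfied by all the ``standard'' ground categories in the statement, and genuinely needed --- see Remark~\ref{remark ground category}) lets one extract the extension morphism as a direct summand of the comparison isomorphism. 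Your proposal uses neither the multi-generator test object nor the conservativity hypothesis, and without both the argument does not close. A secondary omission: when $\mathcal{P}(0)$ and $\calQ(0)$ are nontrivial, coproducts of algebras do not have the naive underlying objects, which is why the paper first reduces to positive operads via preservation of initial objects (Lemmas~5.2 and~\ref{lemma: restriction to no arity zero}); any corrected necessity argument must account for this as well.
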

This result is, in fact, constructive. See Theorem~\ref{theorem: sufficient} and Section~\ref{section: examples}.

An illustrative special case (see Example~\ref{example: arity one}) is when $\phi$ is a homomorphism $R\to S$ of rings. 
It is well known that the restriction from $S$-modules to $R$-modules admits both a left adjoint (extension of scalars) and a right adjoint (coextension of scalars).
A ring homomorphism is automatically a monoidal extension, as the map $R\otimes_R S \to S\otimes_S S \cong S$ is an isomorphism of abelian groups.

This paper is organized as follows. 
In Section~\ref{section: notation} we fix notation for and describe the structure of colored collections and colored operads. 
Once these necessities are out of the way, in Section~\ref{section: theorem statements} we state the main theorem, including the construction of the right adjoint.  
Section~\ref{section: examples} contains several examples, including 
the inclusion of the Gerstenhaber operad into the Batalin--Vilkovisky operad, 
the inclusion of the colored operad governing operads into the colored operad governing cyclic operads, and 
the inclusion of the associative operad into the colored operad governing those operads which are concentrated in positive arity. 
In Sections~\ref{section: necessary} and~\ref{section: sufficient}, we prove necessity and sufficiency of our condition.

This paper is weakly inspired by an example in our paper~\cite{DrummondColeHackney:CFERIQMSAAIIC} which gives a sufficient condition for the existence of a right-induced model structure in the special case where the right adjoint is also a left adjoint. There, we gave the example of the forgetful functor from cyclic operads to operads (see Section~\ref{section: bijection on colors examples} below). The existence of a right adjoint to this forgetful functor, originally due to Templeton~\cite{Templeton:SGO}, surprised us as well as a number of experts with whom we discussed it. This forgetful functor is in fact a restriction functor along a map of colored operads, and we became curious about what features of the governing operad map enabled the existence of the right adjoint.

Ward~\cite[Proposition 7.9]{Ward:6OFGO}, gave a sufficient condition for the existence of this kind of right adjoint (assuming some restrictions on the ground category). 
Ward's motivations are different; his conditions are significantly more restrictive but they ensure not only the existence of the adjoint but its coincidence with a different functor which exists independently.  
Our condition here is both necessary and sufficient and applies in more general ground categories. 
See Theorems~\ref{theorem: necessary} and~\ref{theorem: sufficient} for the full details.

The analogous question for Lawvere theories was studied by Wraith in \cite[\S9]{Wraith:AT}, but the characterization there is quite different to ours.
It would be interesting to compare them in the area of overlap, namely when considering maps of monochrome operads in sets.

We are not aware of other references in the literature that study this question.

\subsection*{Conventions} 
We will use the notation $(\mathcal{E},\otimes, \mathbf{1})$ for a bicomplete symmetric monoidal category, which we abbreviate as $\mathcal{E}$.
All such $\mathcal{E}$ will be assumed closed and we write $\twohoms{-}{-}$ for the internal hom.
We use $\mathbb{N}$ to denote the set of nonnegative integers.
For $n\in \mathbb{N}$, we write $[n]$ for the set $\{1,2,\dots, n\}$ and $\Sigma_n$ for the symmetric group on $n$ letters, $\Sigma_n = \operatorname{Aut}([n])$.

\subsection*{Acknowledgments}
The authors would like to thank Michael Batanin, Rune Haugseng, Robin Koytcheff, Damien Lejay, Marcy Robertson, Ben Ward, and Donald Yau for useful discussions. 
We would also like to thank Gavin Wraith for pointing out his earlier related work.
We are grateful to the referee, whose comments helped us improve the clarity and precision of this paper.

\section{Operads and collections}
\label{section: notation}
In this section we establish conventions for colored operads. 
The parts for fixed colors are fairly standard.
Some of the machinery related to the change of colors is less standard, although not much harder.

\subsection{Collections}
\begin{definition}\label{def: collections}
Suppose that $\colorsa$ is a set of \emph{colors}, and let $\listsopa$ denote the groupoid whose objects are lists $\ua \coloneqq a_1, \dots, a_n$ (where $n$ varies and $a_i \in \colorsa$) and whose morphisms are
\[
	\ua = a_1, \dots, a_n \xrightarrow{\sigma} a_{\sigma(1)},\dots, a_{\sigma(n)} \eqqcolon \ua \sigma
\]
where $\sigma\in \Sigma_n$ is a permutation.
\begin{itemize}
\item If $\colorsb$ is another color set, then an \emph{$(\colorsa,\colorsb)$-collection} $X$ is an object in the functor category $\mathcal{E}^{\listsopa \times \colorsb}$.
\item
The maps of $(\colorsa,\colorsb)$-collections are natural transformations between the functors.
\item
If $\colorsa=\emptyset$, we call an $(\emptyset,\colorsb)$-collection a \emph{$\colorsb$-object}, and write $X_b$ for $X(\,\,;b)$. 
\item We call an object $(a_1,\ldots, a_n; b)$ in $\listsopa\times\colorsb$ an \emph{$(\colorsa,\colorsb)$-profile} or just a profile if $\colorsa$ and $\colorsb$ are clear from context.
We will alternately write such a profile as $(\ua;b)$ or $\vprof{\ua}{b}$, so if $X$ is an $(\colorsa,\colorsb)$-collection we will write $X\vprof{\ua}{b}$ or $X(\ua; b)$ for the value of $X$ at the indicated profile.
\end{itemize}
\end{definition}

Concretely, an $(\colorsa,\colorsb)$-collection $X$ consists of: 
\begin{enumerate}
\item for each (possibly empty) list $(a_1,\ldots, a_n)$ of colors in the color set $\colorsa$ and each color $b$ in $\colorsb$, an object $X(a_1,\ldots, a_n;b)$ of $\mathcal{E}$, and
\item for each element $\sigma$ of the symmetric group $\Sigma_n$, color $b$ in $\colorsb$, and tuple $(a_1,\ldots, a_n)$ in $\colorsa$, a morphism $\sigma^*$ in $\mathcal{E}$ of the form
\[
X(a_1,\ldots, a_n;b)\to X(a_{\sigma(1)},\ldots,a_{\sigma(n)};b)
\]
such that $\id^* = \id$ and $\sigma^*\tau^*=(\tau\sigma)^*$ for all $\sigma$ and $\tau$ in $\Sigma_n$.
\end{enumerate}
A map $X\to Y$ consists of, for each color $b$ in $\colorsb$ and each list $(a_1,\ldots, a_n)$ of colors in $\colorsa$, a morphism in $\mathcal{E}$ from $X(a_1,\ldots, a_n;b)$ to $Y(a_1,\ldots, a_n;b)$ such that the evident $\Sigma_n$ equivariance conditions are satisfied.

\begin{notation}\label{notation: collection associated to a function}
Let $f:\colorsa\to\colorsb$ be a map between color sets. 
We can build two collections out of $f$ as follows.
We build an $(\colorsa,\colorsb)$-collection also called $f$:
\[
f(a_1,\ldots,a_n;b)=
\begin{cases}
\mathbf{1}
&
\text{if }n=1\text{ and }f(a_1)=b,
\\
\initialE
&
\text{otherwise,}
\end{cases}
\]
where $\initialE$ is the initial object of $\mathcal{E}$.
We also have a $(\colorsb,\colorsa)$-collection $\bar{f}$
\[
\bar{f}(b_1,\ldots,b_n;a)=
\begin{cases}
\mathbf{1}
&
\text{if }n=1\text{ and }f(a)=b_1,
\\
\initialE
&
\text{otherwise.}
\end{cases}
\]
When $f$ is invertible, the collections $f^{-1}$ and $\bar{f}$ are canonically isomorphic.

By convention we use the notation $\mathbf{1}_{\colorsa}$ for the $(\colorsa,\colorsa)$-collection $\id_\colorsa$.
\end{notation}

More generally, if $p$ is any span of sets from $\colorsa$ to $\colorsb$, then there is an associated $(\colorsa,\colorsb)$-collection concentrated in arity one. 
Writing $p : \mathsf{U} \to \colorsa \times \colorsb$, this collection is given in profile $(a;b)$ by $\coprod_{p^{-1}(a,b)} \mathbf{1} = p^{-1}(a,b) \cdot \mathbf{1}$ (where the $\cdot$ denotes copower).
Then $f$ comes from the span $(\id, f) : \colorsa \to \colorsa \times \colorsb$ while $\bar{f}$ comes from the span $(f,\id) : \colorsa \to \colorsb \times \colorsa$.

Above we used the groupoid $\listsopa$ of lists of colors.
We also have the groupoid $\listsa$ whose morphisms are $\sigma : \ua \to \sigma \ua \coloneqq a_{\sigma^{-1}(1)}, \dots, a_{\sigma^{-1}(n)}$.
Notice there are functors from $\listsa$ to the discrete category $\mathbb{N} = \{0,1,2,\dots \}$ that take a list $\ua$ to its length.
Both categories $\listsopa$ and $\listsa$ are strict monoidal categories (in fact, free strict symmetric monoidal categories).
The following definition is essentially adapted from \cite[\S2]{Kelly:OOJPM}, and appears in the colored case when $\colorsa=\colorsb$ in \cite[\S3.1]{Yau:IOMCGE}.
\begin{definition}[Day powers]
\label{definition day powers}
Let $Y$ be an $(\colorsa, \colorsb)$-collection.
\begin{itemize}
\item For each $\ub \in \listsb$ of length $m$ and each $\ua \in \listsopa$, there is a functor 
\[
	F : \left\lgroup \prod_{j=1}^m \listsopa \right\rgroup^{\oprm} \times \left\lgroup \prod_{j=1}^m \listsopa \right\rgroup \to \mathcal{E}
\]
having value
\[
	F(\{\uu_j\}, \{\uv_j\}) = \listsopa	(\uu_1\uu_2\dots\uu_m; \ua) \cdot \bigotimes_{j=1}^m Y\vprof{\uv_j}{b_j}.
\]
Here the $\cdot$ denotes copower and $\uu_1\uu_2\dots\uu_m$ is the concatenation of the lists.
\item If $\ua$ is a list of elements in $\colorsa$ and $\ub$ is a list of elements in $\colorsb$, we define an object 
\[
	Y^{\ub}(\ua) \coloneqq \int^{\{\uw_j\} \in \prod\limits_{j=1}^m \listsopa} \listsopa (\uw_1\uw_2\dots\uw_m; \ua) \cdot \bigotimes_{j=1}^m Y\vprof{\uw_j}{b_j}
\]
of $\mathcal{E}$ as the coend of the bifunctor $F$.\footnote{This superscript notation matches with the $S^m$ and $(S^m)k$ appearing in \cite{Kelly:OOJPM}.}
There is an evident naturality in the $\ua$ variable given by postcomposition; there is also a naturality in the $\ub$ variable which makes this a functor $\listsopa \times \listsb \to \mathcal{E}$.
\end{itemize}
\end{definition}

\begin{definition}[Kelly]
\label{definition sub prod}
Suppose that $X$ is a $(\colorsb, \colorsc)$-collection and $Y$ is an $(\colorsa, \colorsb)$-collection.
The \emph{composition product} of $X$ and $Y$ is defined to be the $(\colorsa, \colorsc)$-collection 
\[
	(X \cp Y) \vprof{\ua}{c} = \int^{\ub \in \listsb} X\vprof{\ub}{c} \otimes Y^{\ub}(\ua).
\]
\end{definition}

\begin{remark}
The functor $(-) \cp Y$ goes from $(\colorsb, \colorsc)$-collections to $(\colorsa, \colorsc)$-collections and has a right adjoint, temporarily denoted $\{Y, - \}$.
If $Z$ is an $(\colorsa, \colorsc)$-collection, then the $(\colorsb, \colorsc)$-collection $\{ Y, Z \}$ is given by the end
\[
	\{ Y, Z \} \vprof{\ub}{c} = \int_{\ua \in \listsopa} \twohoms{Y^{\ub}(\ua)}{Z\vprof{\ua}{c}}
\]
where square brackets denote the internal hom of $\mathcal{E}$.
We will frequently need that $(-) \cp Y$ is a left adjoint in what follows, but we will never explicitly use the formula for $\{ Y, Z \}$.
\end{remark}

In general $X \cp (-)$ is not a left adjoint functor (see \cite[p.7]{Kelly:OOJPM}), but is when $X$ is concentrated in arity one (see Lemma~\ref{one right adjoint}).

\begin{lemma}
\label{lemma: Day power when Y is arity one}
Suppose $Y$ is concentrated in arity one. Then $Y^{\ub}(\ua)$ is naturally isomorphic to
\[
\coprod_{\sigma \in \Sigma_m} \bigotimes_{j=1}^m Y(a_{\sigma^{-1}(j)}; b_j).
\]
\end{lemma}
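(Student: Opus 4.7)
The plan is to decompose the indexing groupoid of the coend by arity and to observe that only one component contributes nontrivially.

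Writing $\listsopa^{(k)} \subset \listsopa$ for the full subgroupoid of lists of length $k$, so that $\listsopa^{(1)}$ is the discrete category $\colorsa$, the groupoid $\listsopa$ splits as $\coprod_k \listsopa^{(k)}$, and hence
\[
\prod_{j=1}^m \listsopa = \coprod_{(k_1,\ldots,k_m) \in \mathbb{N}^m} \prod_{j=1}^m \listsopa^{(k_j)}.
\]
Since coends split as coproducts over a disjoint union of indexing categories (the defining coequalizer decomposes componentwise), I would write $Y^{\ub}(\ua)$ as a coproduct indexed by $(k_1,\ldots,k_m)\in\mathbb{N}^m$ of coends over the factors $\prod_j \listsopa^{(k_j)}$.

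Because $Y$ is concentrated in arity one, $Y\vprof{\uw_j}{b_j} = \initialE$ whenever $|\uw_j| \neq 1$. In a closed symmetric monoidal category tensor preserves colimits in each variable, so $\initialE \otimes X \cong \initialE$; hence $\bigotimes_j Y\vprof{\uw_j}{b_j}$ is initial whenever any $k_j \neq 1$, and copowers of sets with an initial object remain initial. Only the tuple $(1,\ldots,1)$ therefore contributes, and on that summand $\prod_j \listsopa^{(1)} \cong \colorsa^m$ is discrete, so the coend collapses to a coproduct
\[
\coprod_{(w_1,\ldots,w_m) \in \colorsa^m} \listsopa((w_1,\ldots,w_m);\ua) \cdot \bigotimes_{j=1}^m Y(w_j; b_j).
\]

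To finish, I would compute that $\listsopa((w_1,\ldots,w_m); \ua)$ is empty unless $|\ua| = m$, in which case it consists of those $\sigma \in \Sigma_m$ with $w_{\sigma(i)} = a_i$, equivalently $w_j = a_{\sigma^{-1}(j)}$. Interchanging the coproduct over $(w_j)$ with the coproduct over $\sigma$ indexing the hom-set, and using that for each $\sigma$ the constraint pins down a unique tuple $(w_j) = (a_{\sigma^{-1}(j)})$, the expression reduces to $\coprod_{\sigma \in \Sigma_m} \bigotimes_{j=1}^m Y(a_{\sigma^{-1}(j)};b_j)$, as claimed. The only mildly delicate point is verifying that both the splitting of the coend over a disjoint union and the vanishing of the irrelevant summands behave as expected; both reduce to distributivity of copowers and tensors over colimits, which holds since $\mathcal{E}$ is closed symmetric monoidal. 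Naturality in $\ua$ and $\ub$ is immediate from the construction.
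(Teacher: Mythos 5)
Your proof is correct and follows essentially the same route as the paper's: restrict the coend to the arity-one part of the indexing groupoid (which the paper does implicitly "by definition," while you justify it via vanishing of the other summands), collapse the resulting coend over a discrete category to a coproduct, and identify $\listsopa((w_1,\dots,w_m);\ua)$ with the permutations $\sigma$ satisfying $w_j = a_{\sigma^{-1}(j)}$. The extra care you take with the splitting over arities and the initial-object argument is a sound elaboration of a step the paper leaves tacit.
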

\begin{proof}
By definition,
\[
	Y^{\ub}(\ua)\cong \int^{\{w_j\} \in \prod\limits_{j=1}^m \colorsa} \listsopa (w_1w_2\dots w_m; \ua) \cdot \bigotimes_{j=1}^m Y\vprof{w_j}{b_j}
\]
and the indexing category is discrete so this is isomorphic to
\begin{align*}
			\coprod_{\uw \in \colorsa^{\times m}} \coprod_{\listsa (\ua; \uw)} \bigotimes_{j=1}^m Y\vprof{w_j}{b_j}.
\end{align*}
We can identify $\listsa (\ua; \uw)$ with the set of $\sigma\in \Sigma_m$ so that $w_j = a_{\sigma^{-1}(j)}$.
The set of pairs $(\uw, \sigma) \in \colorsa^{\times m} \times \Sigma_m$ satisfying $w_j = a_{\sigma^{-1}(j)}$ is just in bijection with $\Sigma_m$.
\end{proof}
On the other hand, if $X$ is concentrated in arity one and we are investigating $X\cp (-)$, then we are only concerned with the case of $m=1$.
In general we have
\[
	Y^b(\ua) = \int^{\uw \in \listsopa} \listsopa (\uw; \ua) \cdot  Y\vprof{\uw}{b} \cong Y\vprof{\ua}{b}
\]
and we see that 
\[
	(X \cp Y)\vprof{\ua}{c} = \int^{b \in \colorsb} X\vprof{b}{c} \otimes Y^{b}(\ua) = \coprod_{b\in \colorsb} X\vprof{b}{c} \otimes Y\vprof{\ua}{b}.
\]
\begin{notation}
\label{notation for angle brackets}
Suppose that $X$ is a $(\colorsb,\colorsc)$-collection concentrated in arity one and let $Z$ be an $(\colorsa,\colorsc)$-collection.
Define $\langle X, Z \rangle$ to be the $(\colorsa,\colorsb)$-collection given by
\[
	\langle X, Z \rangle \vprof{\ua}{b} = \prod_{c\in \colorsc} \twohoms{X\vprof{b}{c}}{Z\vprof{\ua}{c}}.
\]
\end{notation}
\begin{lemma}
\label{one right adjoint}
Suppose that $X$ is a $(\colorsb,\colorsc)$-collection concentrated in arity one and let $\colorsa$ be a set of colors.
The functor $\langle X, - \rangle$ from $(\colorsa,\colorsc)$-collections to $(\colorsa,\colorsb)$-collections is right adjoint to $X\cp(-)$.
\end{lemma}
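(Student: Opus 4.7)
The plan is to exhibit the natural bijection $\text{Hom}(X \cp Y, Z) \cong \text{Hom}(Y, \langle X, Z \rangle)$ for every pair $(Y, Z)$, by unpacking each side profile-by-profile and chaining together three standard universal properties: the coproduct, the internal hom, and the product. Since the isomorphism will be visibly natural in $Y$ and $Z$, this gives the adjunction.

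First I would use the formula derived immediately before Notation~\ref{notation for angle brackets}: because $X$ is concentrated in arity one,
\[
(X \cp Y)\vprof{\ua}{c} \cong \coprod_{b \in \colorsb} X\vprof{b}{c} \otimes Y\vprof{\ua}{b},
\]
so a map $X \cp Y \to Z$ of $(\colorsa, \colorsc)$-collections is the same as a family of morphisms $\coprod_b X\vprof{b}{c} \otimes Y\vprof{\ua}{b} \to Z\vprof{\ua}{c}$ in $\mathcal{E}$, one for each profile, natural in $\ua \in \listsopa$ and $c \in \colorsc$. By the universal property of the coproduct, this is the same as a family of maps $X\vprof{b}{c} \otimes Y\vprof{\ua}{b} \to Z\vprof{\ua}{c}$ indexed by $(\ua, b, c)$, with the same naturality conditions.

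Next, applying the tensor-hom adjunction for the closed structure on $\mathcal{E}$ transforms each such map into a morphism $Y\vprof{\ua}{b} \to \twohoms{X\vprof{b}{c}}{Z\vprof{\ua}{c}}$. Finally, the universal property of the product gathers these over $c \in \colorsc$ into a single morphism
\[
Y\vprof{\ua}{b} \to \prod_{c \in \colorsc} \twohoms{X\vprof{b}{c}}{Z\vprof{\ua}{c}} = \langle X, Z \rangle\vprof{\ua}{b},
\]
which is exactly the profile-wise data of a map $Y \to \langle X, Z\rangle$ of $(\colorsa, \colorsb)$-collections.

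The only potential obstacle is checking that $\Sigma_n$-equivariance in $\ua$ is preserved under this chain of bijections. But each of the three adjunctions is applied pointwise in $\ua$, and the $\Sigma_n$-action enters only through $Y\vprof{\ua}{b}$ and $Z\vprof{\ua}{c}$ (since $X$ carries no arity-one action on the $\ua$ variable). Consequently the bijection restricts to equivariant families on both sides. Naturality of the bijection in $Y$ and in $Z$ is immediate from naturality of each of the three universal properties being invoked, which completes the verification that $\langle X, - \rangle$ is right adjoint to $X \cp (-)$.
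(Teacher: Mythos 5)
Your argument is correct: the paper states Lemma~\ref{one right adjoint} without proof, evidently regarding it as routine, and your chain of universal properties (coproduct, tensor--hom adjunction of the closed structure on $\mathcal{E}$, product over the discrete set $\colorsc$) applied to the formula $(X \cp Y)\vprof{\ua}{c} \cong \coprod_{b} X\vprof{b}{c} \otimes Y\vprof{\ua}{b}$ is exactly the intended verification. Your remark that $\Sigma_n$-equivariance transfers because each bijection is applied pointwise in $\ua$ and is natural in the $\mathcal{E}$-objects involved correctly handles the only point that needs care.
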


When $\colorsa = \colorsb = \colorsc$, Definition~\ref{definition sub prod} agrees with the colored circle product $\circ$ from 
\cite[2.2.3]{HackneyRobertsonYau:RLPCO}, the colored symmetric circle product $\circ^{\mathsf{S}}$ from \cite[\S 3.1]{Yau:IOMCGE}, and the $\Box$-product from \cite[7.2]{BergerMoerdijk:RCORHA}.
Of course the category of $(\colorsa,\colorsa)$-collections equipped with this product is a monoidal category (see \cite[\S 3]{Kelly:OOJPM}, \cite[Proposition 2.1.8]{Yau:IOMCGE}, or the sources above).
Essentially the same proof of that fact shows that the collection of $(\colorsa,\colorsb)$-collections as $\colorsa$ and $\colorsb$ varies forms a bicategory. 
Indeed, there are natural associator and unitor isomorphisms for $\cp$ and $\mathbf{1}_\colorsa$ and we have:
\begin{definition}
The bicategory of colored collections $\CoCo$ has data defined as follows.
\begin{itemize}
\item The $0$-cells are sets of colors.
\item The $1$-cells from color $\colorsa$ to color $\colorsb$ are $(\colorsa, \colorsb)$-collections.
\item The $2$-cells from an $(\colorsa,\colorsb)$-collection $X$ to an $(\colorsa,\colorsb)$-collection $Y$ are maps of collections from $X$ to $Y$.
\item The horizontal composition is $\cp$.
\item The identity morphism for color $\colorsa$ is $\mathbf{1}_{\colorsa}$.
\end{itemize}
The associators and unitors will not be described explicitly.
\end{definition}

Recall from Notation~\ref{notation: collection associated to a function} that every function determines a collection in two different ways.
When considered as 1-cells of the bicategory $\CoCo$ these two collections are adjoint.

\begin{example}
\label{example: adjunctions of colors}
Let $f:\colorsa\to\colorsb$ be a function between color sets. Then the compositions of the collections $f$ and $\bar{f}$ are as follows:
\begin{align*}
\left(f \cp \bar{f}\right)(b_1,\ldots, b_n; b)
&\cong
\begin{cases}
\coprod_{a\in f^{-1}(b)} \mathbf{1} 
&\text{if }n=1\text{ and }b_1=b,
\\
\initialE & \text{otherwise.}
\end{cases}
\\
\left(\bar{f} \cp f\right)(a_1,\ldots, a_n; a)
&\cong
\begin{cases}
\mathbf{1} 
&\text{if }n=1\text{ and }f(a_1)=f(a),
\\
\initialE & \text{otherwise.}
\end{cases}
\end{align*}
\end{example}

Notice that these collections may also be obtained by first composing the span $(\id,f)$ with its reverse $(f,\id)$ and then taking the corresponding collection.

\begin{definition}
\label{definition: adjunction of colors}
The \emph{canonical counit} $\epsilon_f$ of $f$ is the map of collections $f \cp \bar{f}\to \mathbf{1}_\colorsb$ induced in each profile by the fold map $\coprod \mathbf{1}\to \mathbf{1}$. 
The \emph{canonical unit} $\eta_f$ of $f$ is the inclusion of collections $\mathbf{1}_{\colorsa}\to \bar{f} \cp {f}$. 
\end{definition}
\begin{lemma} 
\label{lemma color map adjunctions}
If $f$ is a map of colors $\colorsa\to\colorsb$, then the one-cell $f$ in the bicategory $\CoCo$ is left adjoint to the one-cell $\bar{f}$.
As a consequence, we have the following induced adjunctions:
\begin{itemize}
\item The functor $f\cp -$ from $(\colorsc,\colorsa)$-collections to $(\colorsc,\colorsb)$-collections is left adjoint to the functor $\bar{f}\cp-$ in the opposite direction.
\item The functor $-\cp \bar{f}$ from $(\colorsa,\colorsc)$-collections to $(\colorsb,\colorsc)$-collections is left adjoint to the functor $-\cp f$ in the opposite direction.
\end{itemize}
These functors are equivalences of categories if and only if $f$ is invertible.
\end{lemma}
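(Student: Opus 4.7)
The plan is to verify directly that $(\eta_f,\epsilon_f)$ is a unit-counit pair for $f \dashv \bar{f}$ in $\CoCo$, deduce the two displayed adjunctions of functors by whiskering, and finally characterize when this adjunction is an equivalence.

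For the triangle identities, observe that $f$, $\bar{f}$, $f \cp \bar{f}$, and $\bar{f} \cp f$ are all concentrated in arity one by Example~\ref{example: adjunctions of colors}. Hence by the simplification of $\cp$ displayed immediately after Lemma~\ref{lemma: Day power when Y is arity one}, every composition product appearing in the triangle identities reduces to a coproduct of copies of $\mathbf{1}$. A direct computation yields
\[
(f \cp \bar{f} \cp f)\vprof{a}{b} \cong
\begin{cases}
\coprod_{a' \in f^{-1}(b)} \mathbf{1} & \text{if } f(a) = b, \\
\initialE & \text{otherwise},
\end{cases}
\]
and in this description the map $f \cp \eta_f$ includes $f\vprof{a}{b} = \mathbf{1}$ as the summand indexed by $a$ itself, while $\epsilon_f \cp f$ folds the coproduct to $\mathbf{1}$; the composite is the identity on $f$. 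The second triangle is entirely symmetric.

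The two induced adjunctions of functors follow formally from the bicategorical structure: horizontal composition with a fixed $1$-cell is a pseudofunctor, so for any color set $\colorsc$ the whiskerings $\eta_f \cp (-)$ and $\epsilon_f \cp (-)$ (respectively $(-) \cp \eta_f$ and $(-) \cp \epsilon_f$) give the unit and counit of the corresponding adjunction on functor categories.

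For the equivalence statement, recall that an adjunction is an adjoint equivalence precisely when its unit and counit are both invertible, and these reduce at the monoidal units $\mathbf{1}_\colorsa$ and $\mathbf{1}_\colorsb$ to $\eta_f$ and $\epsilon_f$ themselves. If $f$ is invertible then Example~\ref{example: adjunctions of colors} shows directly that $\eta_f$ and $\epsilon_f$ are isomorphisms. Conversely, $\eta_f$ being an isomorphism forces the condition $f(a') = f(a) \Leftrightarrow a' = a$, so $f$ is injective; and $\epsilon_f$ being an isomorphism forces $\coprod_{a \in f^{-1}(b)} \mathbf{1} \cong \mathbf{1}$ for each $b \in \colorsb$, which requires $f^{-1}(b)$ to be a singleton and so yields bijectivity. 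The main subtlety is the last step, where concluding $|f^{-1}(b)|=1$ from an isomorphism of coproducts of units requires a mild non-degeneracy of $\mathcal{E}$; this is the one place where we use anything beyond formal bicategory theory, and it is immediate in all standard examples.
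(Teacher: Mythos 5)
Your proof is correct and follows essentially the same route as the paper's: the paper simply asserts that $\eta_f$ and $\epsilon_f$ satisfy the triangle identities and are isomorphisms if and only if $f$ is invertible, while you fill in the explicit computation of $f \cp \bar{f} \cp f$ and the whiskering step. Your closing observation that the ``only if'' direction needs $\initialE \not\cong \mathbf{1}$ (and that finite coproducts of $\mathbf{1}$ detect cardinality) is a point the paper glosses over, but it does not change the argument.
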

\begin{proof}
The canonical unit and counit are compatible: the compositions 
\[f\xrightarrow{f \cp \eta_f} f \cp \bar{f} \cp f\xrightarrow{\epsilon_f\cp f}{f}\]
and
\[
\bar{f}\xrightarrow{\eta_f\cp \bar{f}} \bar{f} \cp f \cp \bar{f}\xrightarrow{\bar{f} \cp\, \epsilon_f}\bar{f}
\]
are identity maps. 
The canonical unit and counit are isomorphisms if and only if $f$ is invertible.
\end{proof}


\subsection{Operads}
We now give a definition of (colored) operad which is convenient for our purposes, as well as several descriptions of maps of such.
\begin{definition}
\label{definition: operads}
A colored operad is a monad (as in~\cite[5.4.1]{Benabou:IB}) in the bicategory of colored collections 
(see also~\cite{Street:FTM}, which we will use below for morphisms of monads). 
In other words, it is a choice of color set $\colorsa$ and a monoid in the monoidal category consisting of $(\colorsa,\colorsa)$-collections along with $\cp$ and $\mathbf{1}_{\colorsa}$. 
More explicitly, the data is given by an $\colorsa$-colored collection $\mathcal{P}$ and maps of collections $\mu_\mathcal{P} : \mathcal{P}\cp\mathcal{P}\to\mathcal{P}$ (called \emph{composition}) and $\eta_\mathcal{P} : \mathbf{1}_\colorsa\to \mathcal{P}$ (called the \emph{unit}) which satisfy associativity and unit constraints.
This definition essentially appeared in the monochrome case in \cite{Kelly:OOJPM,Smirnov:HTC}, while the colored case appeared in \cite[\S 7.3]{BergerMoerdijk:RCORHA}.

A map of colored operads $(\colorsa,\mathcal{P})\to(\colorsb,{\calQ})$ is a pair $(f,\phi)$ where $f$ is a function from $\colorsa$ to $\colorsb$ and $\phi$ is a $2$-cell in $\CoCo$ from $f \cp \mathcal{P}$ to ${\calQ}\cp f$ so that the following two diagrams commute (up to suppressed associators and unitors).
\[
\begin{tikzcd}
& f \cp \mathcal{P}\ar[dd,"\phi"]
&f \cp \mathcal{P}\cp\mathcal{P}\ar[dd,swap,"f \cp \mu_\mathcal{P}"]
\ar[r,"\phi\cp\mathcal{P}"]
&{\calQ}\cp f \cp \mathcal{P}\ar[r,"{\calQ}\cp\phi"]
&{\calQ}\cp{\calQ}\cp f\ar[dd,"\mu_{\calQ}\cp f"]
\\
f\ar[ur,"f \cp \eta_\mathcal{P}"]\ar[dr,swap,"\eta_{\calQ}\cp f"]&&
\\
& {\calQ}\cp f&f \cp \mathcal{P}\ar[rr,swap,"\phi"]
&&{\calQ}\cp f
\end{tikzcd}
\]
In other words, it is a pair $(f,\phi)$ such that $(f,\phi)$ is a \emph{monad opfunctor}~\cite[\S4]{Street:FTM} or \emph{colax map of monads}~\cite[\S6.1]{Leinster:HOHC}.
\end{definition}
\begin{remark}[Warning]
Not all monad opfunctors are maps of operads, because the one-cell $f$ must be of a particular form, i.e., must come from a map of color sets. 
As pointed out to us by Rune Haugseng, there is a double categorical framework enhancing $\CoCo$ which takes a little more setup in which the presentation is more uniform.
\end{remark}
Unpacking the definition further using the adjunctions of Lemma~\ref{lemma color map adjunctions}, the data of $\phi$ consists of the following. 
For each profile $(a_1,\ldots,a_k;a)$ of colors in $\colorsa$, we are given a map in $\mathcal{E}$ from $\mathcal{P}(a_1,\ldots,a_k;a)$ to ${\calQ}(f(a_1),\ldots, f(a_k);f(a))$. 
This map is called the \emph{component of $\phi$} at $(a_1,\ldots, a_k;a)$. 
Commutativity of the triangle says that the component of $\phi$ at $(a;a)$ should take the unit at $a$ to the unit at $f(a)$.
Commutativity of the pentagon says that this collection of maps should intertwine the composition of $\mathcal{P}$ and the composition of ${\calQ}$.

\begin{example}
If $\colorsa$ is any set of colors, then the $\colorsa$-colored collection $\mathbf{1}_{\colorsa}$ is an operad with $\eta$ the identity morphism and $\mu$ an instance of the unitor isomorphism.
\end{example}

\begin{remark}[Alternative presentations of maps of colored operads]\label{remark: alt pres col op map}
Let's give two equivalent definitions of a map from \((\colorsa,\mathcal{P})\) to  \((\colorsb,{\calQ})\).
In both cases we will have pair consisting of a function $f: \colorsa\to \colorsb$ and also a 2-cell in $\CoCo$.
That these are equivalent to the original definition is an exercise using the fact that the 1-cell $f$ is left adjoint to the $1$-cell $\bar{f}$ in the bicategory $\CoCo$ by Lemma~\ref{lemma color map adjunctions}.
\begin{enumerate}
\item The 2-cell $\chi$ goes from $\mathcal{P}$ to $\bar{f} \cp {\calQ} \cp f$ and the following two diagrams commute.
\begin{gather*} 
\begin{tikzcd}[column sep=large,ampersand replacement=\&]
\mathbf{1}_{\colorsa} \rar{\eta_{\mathcal{P}}} \dar{\eta_f} \& \mathcal{P} \dar{\chi} \\
\bar{f} \cp {f} \rar{\bar{f} \cp \eta_{{\calQ}} \cp f} \& \bar{f} \cp {\calQ} \cp {f} 
\end{tikzcd} 
\\
\begin{tikzcd}[ampersand replacement=\&]
\mathcal{P} \cp \mathcal{P} 
\dar{\mu_{\mathcal{P}}} \rar{\chi\cp \chi}
\& \bar{f} \cp {\calQ} \cp f \cp \bar{f} \cp {\calQ} \cp f 
\rar{\bar{f} \cp {\calQ} \cp \epsilon_f \cp {\calQ} \cp f }
\&[+2em] \bar{f} \cp {\calQ} \cp {\calQ} \cp f \dar{\bar{f} \cp \mu_{{\calQ}} \cp f}
\\
\mathcal{P} \arrow[rr,"\chi"] \&  \& \bar{f} \cp {\calQ} \cp f
\end{tikzcd}
\end{gather*}
\item The 2-cell $\psi$ goes from $\mathcal{P} \cp \bar{f}$ to $\bar{f} \cp {\calQ}$ and the following two diagrams commute.
\[ \begin{tikzcd}
& \mathcal{P} \cp \bar{f} \ar[dd,"\psi"]
\\
\bar{f}\ar[ur,"\eta_\mathcal{P} \cp \bar{f}"]\ar[dr,swap,"\bar{f} \cp \eta_{\calQ}"]
\\
& \bar{f} \cp {\calQ}
\end{tikzcd} 
\qquad \qquad
\begin{tikzcd}
\mathcal{P} \cp \mathcal{P} \cp \bar{f} \rar{\mathcal{P} \cp \psi} \dar{\mu_{\mathcal{P}} \cp \bar{f}}
 & \mathcal{P} \cp \bar{f} \cp {\calQ} \rar{\psi \cp {\calQ}} & \bar{f} \cp {\calQ} \cp {\calQ} \dar{\bar{f} \cp \mu_{{\calQ}}}
\\
\mathcal{P} \cp \bar{f} \arrow[rr,"\psi"] && \bar{f} \cp {\calQ}
\end{tikzcd}
\]
\end{enumerate}
The first of these redefinitions is just saying that a colored operad map $(\colorsa,\mathcal{P})\to(\colorsb,{\calQ})$ is the same thing as a colored operad map $(\colorsa,\mathcal{P})\to(\colorsa,\bar{f}\cp{\calQ}\cp f)$ which is the identity on color sets.
\end{remark}

Note that there is little distinction between the three versions of colored operad map when $f$ is the identity function on $\colorsa$.

\subsection{Modules and algebras}
Suppose that $(\colorsa,\mathcal{P})$ and $(\colorsb,{\calQ})$ are two colored operads.
As with monads in any bicategory, there are good notions of left $\mathcal{P}$-modules, right ${\calQ}$-modules, and $\mathcal{P}\text{-}{\calQ}$ bimodules.
\begin{definition}
Let $(\colorsa,\mathcal{P})$ and $(\colorsb,{\calQ})$ be colored operads.
\begin{itemize}
\item 
A left $\mathcal{P}$-module consists of a $(\colorsc, \colorsa)$-collection $X$ for some color set $\colorsc$ together with a map of $(\colorsc,\colorsa)$-collections $\lambda: \mathcal{P} \cp X \to X$ so that the diagrams
\[ 
\begin{tikzcd}
\mathcal{P} \cp \mathcal{P} \cp X \rar{\mu_{\mathcal{P}}\cp X} \dar{\mathcal{P} \cp \lambda} & \mathcal{P} \cp X\dar{\lambda}  \\
\mathcal{P} \cp X \rar{\lambda} & X 
\end{tikzcd} 
\qquad
\begin{tikzcd}
\mathbf{1}_{\colorsa} \cp X \arrow[dr,"\cong"] \rar{\eta_{\mathcal{P}}\cp X} & \mathcal{P} \cp X \dar{\lambda} \\
& X
\end{tikzcd}
\]
commute.
We also call such a $\lambda$ a left $\mathcal{P}$-action.
\item
Likewise, a right ${\calQ}$-module consists of a $(\colorsb,\colorsc)$-collection $X$ for some color set $\colorsc$ 
together with a map of $(\colorsb,\colorsc)$-collections $\rho:  X \cp {\calQ} \to X$ so that the diagrams
\[ 
\begin{tikzcd}
X \cp {\calQ} \cp {\calQ} \rar{X\cp \mu_{{\calQ}}} \dar{\rho \cp {\calQ} } & X \cp {\calQ} \dar{\rho}  \\
X \cp {\calQ} \rar{\rho} & X 
\end{tikzcd} 
\qquad
\begin{tikzcd}
X \cp \mathbf{1}_{\colorsb} \arrow[dr,"\cong"] \rar{X \cp \eta_{\calQ}} & X \cp {\calQ} \dar{\rho} \\
& X
\end{tikzcd}
\]
commute.
We also call such a $\rho$ a right ${\calQ}$-action.
\item 
A $\mathcal{P}\text{-}{\calQ}$ bimodule is a $(\colorsb,\colorsa)$-collection which is both a left $\mathcal{P}$-module and a right ${\calQ}$-module so that the square
\[ \begin{tikzcd}
\mathcal{P} \cp X \cp {\calQ} \rar{\mathcal{P} \cp \rho} \dar{\lambda \cp {\calQ}} & \mathcal{P} \cp X \dar{\lambda} \\
X \cp {\calQ} \rar{\rho} & X 
\end{tikzcd} \]
commutes.
Write $\bimod{\mathcal{P}}{{\calQ}}$ for the category of bimodules.
\item 
An \emph{algebra over $\mathcal{P}$} is an $(\emptyset,\colorsa)$-collection $\mathcal{A}$ (that is, an $\colorsa$-object), equipped with the structure of a left $\mathcal{P}$-module.
We write $\algebras{\mathcal{P}}$ for the category of $\mathcal{P}$-algebras.
\end{itemize}
\end{definition}
\begin{remark}
\leavevmode
\begin{enumerate}
\item 
Concretely, the data of an algebra $\mathcal{A}$ is given by an $\colorsa$-indexed family of $\mathcal{E}$-objects $\mathcal{A}_a \coloneqq \mathcal{A}(\,\,;a)$ along with maps
\[
\mathcal{P}(a_1,\ldots, a_n;a)\otimes \mathcal{A}_{a_1}\otimes\cdots\otimes \mathcal{A}_{a_n} \to \mathcal{A}_a
\]
which satisfy associativity, unitality, and equivariance constraints.
\item 
Note that $\algebras{\mathcal{P}}$ is nothing but $\bimod{\mathcal{P}}{{\calQ}}$ for $\mathcal{Q}$ the initial colored operad, that is, the unique operad with an empty color set.
Many of our results are stated for $\algebras{\mathcal{P}}$, but hold more generally for $\bimod{\mathcal{P}}{\mathcal{R}}$ where $\mathcal{R}$ is some operad which stays fixed throughout the discussion.
\item
Every $(\colorsc, \colorsa)$-collection $X$ is automatically a right $\mathbf{1}_{\colorsc}$-module, in a way that is compatible with any left $\mathcal{P}$-module structure on $X$.
\end{enumerate}
\end{remark}
\begin{notation}\label{notation: cp sub Q}
Suppose given three colored operads $\mathcal{P}$, ${\calQ}$, and $\mathcal{R}$.
Then there is an induced functor
\[
\cp_{{\calQ}} : \bimod{\mathcal{P}}{{\calQ}} \times \bimod{{\calQ}}{\mathcal{R}} \to \bimod{\mathcal{P}}{\mathcal{R}}
\]
given at the level of collections as a reflexive coequalizer
\[ \begin{tikzcd}
X \cp {\calQ} \cp Y \rar[shift left, "X \cp \lambda_Y "] \rar[shift right, "\rho_X\cp Y" swap] & X \cp Y \rar & X \cp_{{\calQ}} Y.
\end{tikzcd} \]
\end{notation}
The special case when ${\calQ} = \mathbf{1}_{\colorsb}$ gives $X\cp_{\mathbf{1}_{\colorsb}}Y \cong  X\cp Y$.

\begin{proposition}
\label{proposition associativity}
If ${\calQ}$ and $\mathcal{R}$ are two colored operads, then
\[
	(X \cp_{{\calQ}} Y)\cp_{\mathcal{R}} Z \cong X \cp_{{\calQ}} (Y\cp_{\mathcal{R}} Z).
\]
\end{proposition}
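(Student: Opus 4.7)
The plan is to exhibit both sides of the claimed isomorphism as the colimit of a common reflexive commuting square of parallel pairs on the five-fold composite $X \cp {\calQ} \cp Y \cp \mathcal{R} \cp Z$, then to invoke the classical interchange of reflexive coequalizers.

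First I would record two preservation facts. For any collection $V$, the functor $(-) \cp V$ is a left adjoint by the remark following Definition~\ref{definition sub prod}, so it preserves all colimits; in particular $(-) \cp \mathcal{R} \cp Z$ sends the reflexive coequalizer defining $X \cp_{\calQ} Y$ (as in Notation~\ref{notation: cp sub Q}) to a reflexive coequalizer. Second, and more subtly, $W \cp (-)$ preserves reflexive coequalizers for any collection $W$: unwinding Definition~\ref{definition sub prod}, $W \cp Y$ is built from copowers, the tensor $\otimes$, and coends applied in the $Y$ variable, and in any bicomplete closed symmetric monoidal $\mathcal{E}$ the tensor product preserves reflexive coequalizers in each variable, hence diagonally, by the standard interchange argument. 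So $X \cp (-)$ and $X \cp {\calQ} \cp (-)$ both preserve the reflexive coequalizer defining $Y \cp_{\mathcal{R}} Z$.

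Next I would consider the commuting square of reflexive parallel pairs
\[
\begin{tikzcd}
X \cp {\calQ} \cp Y \cp \mathcal{R} \cp Z \rar[shift left] \rar[shift right] \dar[shift left] \dar[shift right] & X \cp Y \cp \mathcal{R} \cp Z \dar[shift left] \dar[shift right] \\
X \cp {\calQ} \cp Y \cp Z \rar[shift left] \rar[shift right] & X \cp Y \cp Z
\end{tikzcd}
\]
in which the horizontal pairs are induced by the right ${\calQ}$-action on $X$ and the left ${\calQ}$-action on $Y$, while the vertical pairs are induced by the right $\mathcal{R}$-action on $Y$ and the left $\mathcal{R}$-action on $Z$. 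Each pair is reflexive via the common section produced by the units $\eta_{\calQ}$ or $\eta_\mathcal{R}$ together with the associators of $\cp$ in $\CoCo$, and the square commutes by the bimodule compatibility axiom for $Y$ (and the obvious interchange for the other corners). By the first preservation fact, taking the horizontal coequalizer first yields the reflexive pair $X \cp_{\calQ} Y \cp \mathcal{R} \cp Z \rightrightarrows X \cp_{\calQ} Y \cp Z$, whose coequalizer is $(X \cp_{\calQ} Y) \cp_{\mathcal{R}} Z$. By the second preservation fact, taking the vertical coequalizer first yields the reflexive pair $X \cp {\calQ} \cp (Y \cp_{\mathcal{R}} Z) \rightrightarrows X \cp (Y \cp_{\mathcal{R}} Z)$, whose coequalizer is $X \cp_{\calQ} (Y \cp_{\mathcal{R}} Z)$. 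Since reflexive coequalizers commute with one another, the two iterated colimits agree, and a check against the associators of $\cp$ shows the induced isomorphism respects any residual left and right actions.

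The main obstacle is the second preservation fact, which is not automatic from an adjunction argument since, as noted in the excerpt, $X \cp (-)$ fails to be a left adjoint in general. It must be verified by hand from the coend formula of Definition~\ref{definition sub prod}, using that reflexive coequalizers in $\mathcal{E}$ are preserved by $\otimes$ in each variable and therefore diagonally, together with the fact that coends and copowers commute with arbitrary colimits.
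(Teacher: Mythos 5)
Your proposal is correct and follows essentially the same route as the paper, whose proof consists precisely of citing the two preservation facts you use --- that $(-)\cp W$ preserves all colimits while $W \cp (-)$ preserves reflexive coequalizers --- and leaving the interchange argument implicit. You have simply spelled out the $3\times 3$ interchange and the diagonal argument for the second preservation fact (which is indeed the subtle point, since the Day power involves $Y$ tensored with itself several times), so there is nothing to object to.
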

\begin{proof}
This essentially follows from the fact that $W\cp(-)$ preserves \emph{reflexive coequalizers} while $(-)\cp W$ preserves all colimits (in particular, reflexive coequalizers).
\end{proof}

\subsection{Restriction and extension of algebras}
A map of operads induces a well-known adjunction between their categories of algebras. In this section we review this procedure, describing it explicitly and categorically.
\begin{lemma}
\label{lemma: left module structure}
Let $(f,\phi)$ be a map of operads from $\mathcal{P}$ to ${\calQ}$. There is an induced left $\mathcal{P}$-action $\lambda_\phi$ on $\bar{f} \cp {\calQ}$.
\end{lemma}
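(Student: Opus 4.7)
The plan is to use the equivalent reformulation of $\phi$ as a $2$-cell $\psi \colon \mathcal{P} \cp \bar{f} \to \bar{f} \cp {\calQ}$ from Remark~\ref{remark: alt pres col op map}(2), which comes equipped with a unit triangle and an associativity pentagon. The desired left $\mathcal{P}$-action is then defined as the composite
\[
\lambda_\phi \colon \mathcal{P} \cp \bar{f} \cp {\calQ} \xrightarrow{\psi \cp {\calQ}} \bar{f} \cp {\calQ} \cp {\calQ} \xrightarrow{\bar{f} \cp \mu_{\calQ}} \bar{f} \cp {\calQ}.
\]
Intuitively, $\bar{f} \cp {\calQ}$ is the free right ${\calQ}$-module on $\bar{f}$, and $\psi$ equips $\bar{f}$ with just enough of a ``$\mathcal{P}$-to-${\calQ}$'' structure to let $\mathcal{P}$ act by sliding through $\bar{f}$ into ${\calQ}$ and then multiplying.

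To verify the left module axioms, I would treat each in turn. For the unit axiom, the unit triangle for $\psi$ identifies $(\psi \cp {\calQ}) \circ (\eta_\mathcal{P} \cp \bar{f} \cp {\calQ})$ with $\bar{f} \cp \eta_{\calQ} \cp {\calQ}$, and then the right unit axiom for $\mu_{\calQ}$ finishes the job. For the associativity axiom, I would expand both $\lambda_\phi \circ (\mathcal{P} \cp \lambda_\phi)$ and $\lambda_\phi \circ (\mu_\mathcal{P} \cp \bar{f} \cp {\calQ})$ as composites involving $\psi$ and $\mu_{\calQ}$: the interchange law in $\CoCo$ moves a $\psi \cp {\calQ}$ past a $\mathcal{P} \cp \bar{f} \cp \mu_{\calQ}$, the $\psi$-pentagon trades $\mu_\mathcal{P} \cp \bar{f}$ for two successive applications of $\psi$ followed by $\bar{f} \cp \mu_{\calQ}$, and the two resulting expressions collapse onto one another by the associativity of $\mu_{\calQ}$.

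There is no substantive obstacle here: this is simply the standard construction of a pulled-back module along a monad opfunctor (cf.~\cite{Street:FTM}), rendered explicitly in the bicategory $\CoCo$. The only care needed lies in the bookkeeping of associators and unitors, which I would suppress as the paper does elsewhere. As an alternative, one could define $\lambda_\phi$ directly from $\phi$ as the mate of
\[
f \cp \mathcal{P} \cp \bar{f} \cp {\calQ} \xrightarrow{\phi \cp \bar{f} \cp {\calQ}} {\calQ} \cp f \cp \bar{f} \cp {\calQ} \xrightarrow{{\calQ} \cp \epsilon_f \cp {\calQ}} {\calQ} \cp {\calQ} \xrightarrow{\mu_{\calQ}} {\calQ}
\]
under the adjunction $f \cp (-) \dashv \bar{f} \cp (-)$ of Lemma~\ref{lemma color map adjunctions}; unwinding the $\psi$-form shows this yields the same $2$-cell.
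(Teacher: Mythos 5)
Your proposal is correct and follows essentially the same route as the paper: the paper likewise passes to the adjoint form $\psi\colon \mathcal{P}\cp\bar{f}\to\bar{f}\cp{\calQ}$, defines $\lambda_\phi = (\bar{f}\cp\mu_{{\calQ}})\circ(\psi\cp{\calQ})$, and checks associativity via the whiskered $\psi$-pentagon, interchange, and associativity of $\mu_{{\calQ}}$, and unitality via the unit triangle for $\psi$ together with a unit axiom for $\mu_{{\calQ}}$. The only quibble is terminological: the identity you invoke at the end of the unit check is $\mu_{{\calQ}}\circ(\eta_{{\calQ}}\cp{\calQ})=\id$, which is usually called the \emph{left} unit axiom, but this does not affect the argument.
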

\begin{proof}
We utilize the second adjoint definition from Remark~\ref{remark: alt pres col op map} and assume we have a 2-cell $\psi : \mathcal{P} \cp \bar{f} \to \bar{f} \cp {\calQ}$
so that the two diagrams
\[ \begin{tikzcd}[row sep=tiny]
& \mathcal{P} \cp \bar{f} \ar[dd,"\psi"]
\\
\bar{f}\ar[ur,"\eta_\mathcal{P} \cp \bar{f}"]\ar[dr,swap,"\bar{f} \cp \eta_{\calQ}"]
\\
& \bar{f} \cp {\calQ}
\end{tikzcd} 
\qquad \qquad
\begin{tikzcd}
\mathcal{P} \cp \mathcal{P} \cp \bar{f} \rar{\mathcal{P} \cp \psi} \dar{\mu_{\mathcal{P}} \cp \bar{f}}
 & \mathcal{P} \cp \bar{f} \cp {\calQ} \rar{\psi \cp {\calQ}} & \bar{f} \cp {\calQ} \cp {\calQ} \dar{\bar{f} \cp \mu_{{\calQ}}}
\\
\mathcal{P} \cp \bar{f} \arrow[rr,"\psi"] && \bar{f} \cp {\calQ}
\end{tikzcd}
\]
commute.
Define $\lambda$ to be the composite
\[
	\mathcal{P} \cp \bar{f} \cp {\calQ} \xrightarrow{\psi \cp {\calQ}} \bar{f} \cp {\calQ} \cp {\calQ} \xrightarrow{\bar{f} \cp \mu_{{\calQ}}} \bar{f} \cp {\calQ}.
\]
We see that $\lambda (\mathcal{P} \cp \lambda) = \lambda (\mu_{\mathcal{P}} \cp \bar{f} \cp {\calQ})$ holds because of commutativity of the following diagram.

\[ \begin{tikzcd}[column sep=large]
\mathcal{P} \cp \mathcal{P} \cp \bar{f} \cp {\calQ} \arrow[dd,"\mu_{\mathcal{P}} \cp \bar{f} \cp {\calQ}"] \rar{\mathcal{P} \cp \psi \cp {\calQ}} & 
\mathcal{P} \cp \bar{f} \cp {\calQ} \cp {\calQ} \rar{\mathcal{P} \cp \bar{f} \cp \mu_{{\calQ}}} \dar{\psi \cp {\calQ} \cp {\calQ} }& 
\mathcal{P} \cp \bar{f} \cp {\calQ} \dar{\psi \cp {\calQ}}
\\
& 
\bar{f} \cp {\calQ} \cp {\calQ} \cp {\calQ} \rar{\bar{f} \cp {\calQ} \cp \mu_{{\calQ}}} \dar{\bar{f} \cp \mu_{{\calQ}} \cp {\calQ}} & 
\bar{f} \cp {\calQ} \cp {\calQ} \dar{\bar{f} \cp \mu_{{\calQ}}}
\\
\mathcal{P} \cp \bar{f} \cp {\calQ} \rar{\psi \cp {\calQ}} & 
\bar{f} \cp {\calQ} \cp {\calQ} \rar{\bar{f} \cp \mu_{{\calQ}}} & 
\bar{f} \cp {\calQ}
\end{tikzcd} \]
Here the left-hand rectangle commutes by our assumption on $\psi$, while the two small squares commute by naturality.

Compatibility with the unit holds by the diagram
\[ \begin{tikzcd}
& \mathcal{P} \cp \bar{f} \cp {\calQ} \arrow[ddr, bend left, "\lambda"] \dar{\psi \cp {\calQ}} \\
& \bar{f} \cp {\calQ} \cp {\calQ} \arrow[dr,"\bar{f} \cp \mu_{{\calQ}}" description]\\
\bar{f} \cp {\calQ} \arrow[uur, "\eta_\mathcal{P} \cp \bar{f} \cp {\calQ}", bend left] \arrow[ur, "\bar{f} \cp {\calQ} \cp \eta_{\calQ}" description]  \arrow[rr, "\id"] & & 
\bar{f} \cp {\calQ}.
\end{tikzcd} \]
\par \vspace{-1.5\baselineskip} \qedhere
\end{proof}

\begin{remark}[Induced right module structure]
\label{remark: right module structure}
Similarly, given an operad map $\mathcal{P} \to {\calQ}$ there is an induced right $\mathcal{P}$-action $\rho_\phi$ on ${\calQ}\cp f$.
The proof is akin to the proof of Lemma~\ref{lemma: left module structure}, except one should use the original definition of operad map rather than its adjoint version.
\end{remark}

\begin{remark}[The usual adjunction between algebras]
\label{remark: usual adjunction between algebras}
Suppose that $(f,\phi): \mathcal{P} \to {\calQ}$ is a map of operads.
In light of Lemma~\ref{lemma: left module structure} there is a functor $\phi^*$ from ${\calQ}$-algebras to $\mathcal{P}$-algebras given on underlying collections by
\[
	\mathcal{B} \mapsto (\bar{f}\cp {\calQ})\cp_{{\calQ}} \mathcal{B} \cong \bar{f}\cp \mathcal{B},
\]
with action induced by the left $\mathcal{P}$-module structure on $\bar{f}\cp {\calQ}$.

Similarly, in light of Remark~\ref{remark: right module structure}, there is a functor $\phi_!$ from $\mathcal{P}$-algebras to ${\calQ}$-algebras given on underlying collections by
\[
	\mathcal{A} \mapsto ({\calQ}\cp f) \cp_{\mathcal{P}} \mathcal{A}.
\]
The outcome of this process retains the evident left ${\calQ}$-action.

Using the canonical unit and counit of $f$, it can be shown that $\phi_!$ is left adjoint to $\phi^*$.
\end{remark}
We are interested in the question of when $\phi^*$ admits an exceptional \emph{right} adjoint $\phi_*$ in addition to the left adjoint $\phi_!$.
We next explore one situation where this occurs, namely when $\mathcal{P}$ and ${\calQ}$ are actually categories and $\phi_*$ is right Kan extension.

\subsection{Categories as operads}

\begin{definition}
Let $\mathcal{P}$ be an $\colorsa$-colored operad. 
The \emph{underlying category} $\underlying{\mathcal{P}}$ is the $\mathcal{E}$-enriched category whose
\begin{itemize}
\item objects are the elements of the color set $\colorsa$,
\item morphism object between $c$ and $d$ is $\mathcal{P}(c;d)$, and
\item whose unit and composition are given by the unit and the restriction of the composition of $\mathcal{P}$.
\end{itemize}
In the same spirit, if $X$ is an $(\colorsa,\colorsb)$-collection, we will write $\underlying{X}$ for the corresponding collection that is concentrated in arity one.
That is, $\underlying{X}$ is the collection with
\[
\underlying{X}(a_1, \dots, a_n; b) = \begin{cases}
	X(a_1; b) & \text{ if $n=1$} \\
	\initialE & \text{ otherwise.}
\end{cases}
\]
\end{definition}
The underlying category assignment $\underlying{-}$ is a functor, and is right adjoint to the inclusion of categories into operads. 
We can also think of the underlying category $\underlying{\mathcal{P}}$ as an $\colorsa$-colored suboperad of $\mathcal{P}$ via the counit of the adjunction, which we will do without comment or change of notation.

Let $\mathcal{M}$ and $\mathcal{N}$ be categories enriched in $\mathcal{E}$ with object sets $\colorsa$ and $\colorsb$, respectively.
Suppose that $X$ is a $(\colorsb,\colorsa)$-collection concentrated in arity one, which moreover comes with the structure of an $\mathcal{M}$-$\mathcal{N}$ bimodule.
In this case, we can regard $X$ as an $\mathcal{E}$-functor
\[
	\mathcal{N}^{\oprm} \times \mathcal{M} \to \mathcal{E}
\]
which on objects sends $(b,a)$ to $X(b;a) = X\vprof{b}{a}$.

Suppose that $Y$ is a $(\colorsc, \colorsb)$-collection which is a left $\mathcal{N}$-module.
Concretely, $X \cp_{\mathcal{N}} Y$ (from Notation~\ref{notation: cp sub Q}) is given by the coend
\[
	(X \cp_{\mathcal{N}} Y)\vprof{\uc}{a} = \int^{b\in \mathcal{N}} X\vprof{b}{a} \otimes Y\vprof{\uc}{b}
\]
and the left $\mathcal{M}$-action is given by
\[ \begin{tikzcd}[sep=small]
 \mathcal{M}\vprof{a}{a'} \otimes \displaystyle \int^{b\in \mathcal{N}} X\vprof{b}{a} \otimes Y\vprof{\uc}{b}   \\
\displaystyle \int^{b\in \mathcal{N}} \mathcal{M}\vprof{a}{a'} \otimes X\vprof{b}{a} \otimes Y\vprof{\uc}{b}  \rar \uar{\cong} &  \displaystyle \int^{b\in \mathcal{N}} X\vprof{b}{a'} \otimes Y\vprof{\uc}{b}. 
\end{tikzcd} \]
Our primary focus going forward will be the case where $\colorsc$ is the empty set.
\begin{lemma}
\label{lemma: underlying adjunction}
Let $\mathcal{M}$ and $\mathcal{N}$ be $\mathcal{E}$-enriched categories with objects $\colorsa$ and $\colorsb$ respectively. 
Let $X$ be a $(\colorsb,\colorsa)$-collection concentrated in arity one, equipped with the structure of an $\mathcal{M}\text{-}\mathcal{N}$ bimodule.
Then the functor
\[
	X\cp_{\mathcal{N}} (-) : \algebras{\mathcal{N}} \to  \algebras{\mathcal{M}}
\]
admits a right adjoint.
\end{lemma}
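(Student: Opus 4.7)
The plan is to construct the right adjoint explicitly via an end formula dual to the coend formula defining $X\cp_{\mathcal{N}}(-)$, exploiting the fact that everything in sight is concentrated in arity one. In this setting, $\mathcal{M}$-algebras are simply $\mathcal{E}$-functors $\mathcal{M}\to\mathcal{E}$ (and likewise for $\mathcal{N}$), the bimodule $X$ is an $\mathcal{E}$-profunctor $\mathcal{N}^{\oprm}\otimes\mathcal{M}\to\mathcal{E}$, and the formula $(X\cp_{\mathcal{N}}\mathcal{B})(a)=\int^{b\in\mathcal{N}}X\vprof{b}{a}\otimes\mathcal{B}(b)$ is the classical weighted-colimit coend.

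For each $\mathcal{M}$-algebra $\mathcal{A}$, I would define a candidate value for the right adjoint as the $\colorsb$-object
\[
\mathcal{A}^X(b) = \int_{a\in\mathcal{M}} \twohoms{X\vprof{b}{a}}{\mathcal{A}(a)},
\]
where the end uses the left $\mathcal{M}$-actions on both $X\vprof{b}{-}$ and $\mathcal{A}$. This refines the collection-level right adjoint $\langle X,\mathcal{A}\rangle$ from Notation~\ref{notation for angle brackets} and Lemma~\ref{one right adjoint}, replacing the product over the set $\colorsa$ by an end over the category $\mathcal{M}$. The right $\mathcal{N}$-action on $X$, which takes the form $\mathcal{N}(b;b')\otimes X\vprof{b'}{a}\to X\vprof{b}{a}$ (contravariant in $\mathcal{N}$), induces by tensor-hom adjunction and naturality in $a$ an $\mathcal{N}$-action $\mathcal{N}(b;b')\otimes\mathcal{A}^X(b)\to\mathcal{A}^X(b')$; the bimodule compatibility axiom ensures this action is well defined on the end and satisfies the $\mathcal{N}$-algebra axioms.

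The adjunction then falls out of a direct Fubini calculation:
\begin{align*}
\operatorname{Hom}_{\algebras{\mathcal{M}}}(X\cp_{\mathcal{N}}\mathcal{B},\mathcal{A})
&\cong \int_{a\in\mathcal{M}} \mathcal{E}\!\left(\int^{b\in\mathcal{N}} X\vprof{b}{a}\otimes\mathcal{B}(b),\,\mathcal{A}(a)\right) \\
&\cong \int_{b\in\mathcal{N}}\int_{a\in\mathcal{M}} \mathcal{E}\!\left(X\vprof{b}{a}\otimes\mathcal{B}(b),\,\mathcal{A}(a)\right) \\
&\cong \int_{b\in\mathcal{N}} \mathcal{E}\!\left(\mathcal{B}(b),\,\mathcal{A}^X(b)\right) \\
&\cong \operatorname{Hom}_{\algebras{\mathcal{N}}}(\mathcal{B},\mathcal{A}^X),
\end{align*}
using continuity of $\mathcal{E}(-,-)$ in its first slot, Fubini for iterated ends, and the tensor-hom adjunction in $\mathcal{E}$. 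The two outer ends are the standard end formulas for $\mathcal{E}$-natural transformations between $\mathcal{E}$-functors $\mathcal{M}\to\mathcal{E}$ and $\mathcal{N}\to\mathcal{E}$, respectively.

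The main obstacle is verifying that the outer hom-sets really equal the ends displayed — that the wedge conditions encode precisely the algebra-map axioms. For $\algebras{\mathcal{M}}$ this is enriched $\mathcal{M}$-naturality of a map $X\cp_{\mathcal{N}}\mathcal{B}\to\mathcal{A}$, while for $\algebras{\mathcal{N}}$ it is $\mathcal{N}$-equivariance of $\mathcal{B}\to\mathcal{A}^X$. The bimodule compatibility of $X$ is precisely what allows both wedge conditions to be read off simultaneously from the same iterated end after Fubini, so once this identification is in hand the adjunction bijection is formal and manifestly natural in $\mathcal{A}$ and $\mathcal{B}$.
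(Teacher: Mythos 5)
Your proposal is correct and takes essentially the same approach as the paper: the paper constructs the right adjoint as the equalizer of $\langle X, Z\rangle \rightrightarrows \langle \mathcal{M}\cp X, Z\rangle$, which unwinds to exactly your end $\int_{a\in\mathcal{M}}\twohoms{X\vprof{b}{a}}{\mathcal{A}(a)}$ (the paper records this identification in the remark immediately following the lemma), and equips it with the same $\mathcal{N}$-action obtained by pulling back along the right action $X\cp\mathcal{N}\to X$. Your Fubini calculation simply makes explicit what the paper compresses into ``a categorification of a standard base-change adjunction.''
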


\begin{notation}
The right adjoint is called $\threehoms{\mathcal{M}}{X}{-}^{\mathcal{M}}.$ 
As the category $\mathcal{M}$ will always be clear from context, we simply use the shorthand $\threehoms{\mathcal{M}}{X}{-}$ for this functor.
That is, the adjunction from Lemma~\ref{lemma: underlying adjunction} takes the form
\[
\begin{tikzcd}
\algebras{\mathcal{N}}
 \ar[rr,"{X\cp_{\mathcal{N}} -}", shift left=2.5]
&\bot &\algebras{\mathcal{M}}.
 \ar[ll,"{\threehoms{\mathcal{M}}{X}{-}}", shift left=2.5]
\end{tikzcd}
\]
\end{notation}

\begin{proof}[Proof of Lemma~\ref{lemma: underlying adjunction}]
This is a categorification of a standard base-change adjunction and follows from the adjunction of the closed monoidal structure on $\mathcal{E}$.
In more detail, suppose that $Z$ is an $\mathcal{M}$-algebra.
Define the underlying $\colorsb$-object of $\threehoms{\mathcal{M}}{X}{Z}$ to be the equalizer of the diagram
\[ \begin{tikzcd}[column sep=tiny, row sep=small]
& \langle X, \langle \mathcal{M}, Z \rangle \rangle \arrow[dr,"\cong"] \\
\langle X, Z \rangle \arrow[ur] \arrow[rr] & & \langle  \mathcal{M} \cp X, Z \rangle \end{tikzcd} \]
where $Z \to \langle \mathcal{M}, Z \rangle$ is the adjoint, from Lemma~\ref{one right adjoint}, of the structure map $\mathcal{M} \cp Z \to Z$.

Let us now turn to the $\mathcal{N}$-action on this $\colorsb$-object.
To produce the map $\mathcal{N} \cp \threehoms{\mathcal{M}}{X}{Z} \to \threehoms{\mathcal{M}}{X}{Z}$ we use the fact that $\mathcal{N} \cp (-)$ is left adjoint to $\langle \mathcal{N}, - \rangle$.
The right adjoint $\langle \mathcal{N}, - \rangle$ commutes with the equalizer, and by adjunction we see that it is equivalent to provide a map
\[
\threehoms{\mathcal{M}}{X}{Z} 
\to 
\langle\mathcal{N},\threehoms{\mathcal{M}}{X}{Z}\rangle
\cong 
\threehoms{\mathcal{M}}{X\cp \mathcal{N}}{Z}
\]
which is then supplied by pulling back along the right action $X\cp \mathcal{N}\to X$.
\end{proof}

\begin{remark}
If $(\colorsc, \mathcal{R})$ is an operad, then the proof of Lemma~\ref{lemma: underlying adjunction} can be readily adapted to provide an adjunction between $\bimod{\mathcal{N}}{\mathcal{R}}$ and $\bimod{\mathcal{M}}{\mathcal{R}}$.
The lemma we have stated is just the special case when $\mathcal{R}$ is the initial colored operad (that is, when $\colorsc = \emptyset$).
Unraveling the equalizer description from the proof reveals that the underlying $(\colorsc, \colorsb)$-collection of $\threehoms{\mathcal{M}}{X}{Z}$ may be written in end notation as 
\[
	\threehoms{\mathcal{M}}{X}{Z}\vprof{\uc}{b} = \int_{a\in \mathcal{M}} \twohoms{X\vprof{b}{a}}{Z\vprof{\uc}{a}},
\]
which compares favorably with the formula from Notation~\ref{notation for angle brackets}.
\end{remark}

\begin{corollary}
\label{cor: underlying adjunction}
Let $(f,\phi)$ be a map of colored operads from $\mathcal{P}$ to ${\calQ}$.
There is an adjunction between $\underlying{{\calQ}}$-algebras and $\underlying{\mathcal{P}}$-algebras with left adjoint
\[
\underlying{\phi}^* = \bar{f} \cp \underlying{{\calQ}} \cp_{\underlying{{\calQ}}} (-) \cong \bar{f} \cp (-)
\]
and right adjoint
\[
\underlying{\phi}_* = \threehoms{\underlying{\mathcal{P}}}{\bar{f} \cp \underlying{{\calQ}}}{-}.
\]
\end{corollary}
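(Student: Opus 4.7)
The plan is to realize the claim as an instance of Lemma~\ref{lemma: underlying adjunction}, applied with $\mathcal{M} = \underlying{\mathcal{P}}$, $\mathcal{N} = \underlying{{\calQ}}$, and $X = \bar{f} \cp \underlying{{\calQ}}$. The first step is to observe that the underlying category construction applied to $(f,\phi)$ yields a map $\underlying{\phi} : \underlying{\mathcal{P}} \to \underlying{{\calQ}}$ of $\mathcal{E}$-enriched categories (viewed again as operads), obtained by restricting the $2$-cell $\phi$ to its arity-one component. This uses the functoriality of $\underlying{(-)}$ noted just after its definition, together with the fact that $\underlying{\mathcal{P}}$ and $\underlying{{\calQ}}$ sit as suboperads of $\mathcal{P}$ and ${\calQ}$.

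Next, I would equip $X = \bar{f} \cp \underlying{{\calQ}}$ with the required bimodule structure. That $X$ is concentrated in arity one is immediate from the composition product formula, since both factors are (cf.\ Lemma~\ref{lemma: Day power when Y is arity one}). The right $\underlying{{\calQ}}$-action is simply $\bar{f} \cp \mu_{\underlying{{\calQ}}}$. The left $\underlying{\mathcal{P}}$-action is obtained by applying Lemma~\ref{lemma: left module structure} to the restricted map $\underlying{\phi}$; compatibility of the two actions (making $X$ a genuine bimodule) is an immediate diagram chase, amounting to associativity of $\mu_{\underlying{{\calQ}}}$.

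With this bimodule in hand, Lemma~\ref{lemma: underlying adjunction} produces the desired adjunction with right adjoint $\threehoms{\underlying{\mathcal{P}}}{\bar{f} \cp \underlying{{\calQ}}}{-}$, matching the stated formula for $\underlying{\phi}_*$. To identify the left adjoint with $\bar{f} \cp (-)$, apply Proposition~\ref{proposition associativity} (viewing $\bar{f}$ as a right $\mathbf{1}_{\colorsb}$-module automatically and $\underlying{{\calQ}}$ as a $\mathbf{1}_{\colorsb}\text{-}\underlying{{\calQ}}$ bimodule) together with the unit constraint for a $\underlying{{\calQ}}$-algebra $\mathcal{B}$:
\[
(\bar{f} \cp \underlying{{\calQ}}) \cp_{\underlying{{\calQ}}} \mathcal{B} \;\cong\; \bar{f} \cp (\underlying{{\calQ}} \cp_{\underlying{{\calQ}}} \mathcal{B}) \;\cong\; \bar{f} \cp \mathcal{B}.
\]
I do not expect any serious obstacle here; the only points to keep straight are that the arity-one restriction of $\phi$ is compatible with composition (which is functoriality of $\underlying{(-)}$) and that the two module structures on $X$ commute (which reduces to associativity already used in Lemma~\ref{lemma: left module structure}).
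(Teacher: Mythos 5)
Your proposal is correct and follows exactly the paper's route: the paper's proof is precisely the one-line application of Lemma~\ref{lemma: underlying adjunction} with $\mathcal{M} = \underlying{\mathcal{P}}$, $\mathcal{N} = \underlying{{\calQ}}$, and $X = \bar{f} \cp \underlying{{\calQ}}$, and your additional details (the bimodule structure on $X$ via Lemma~\ref{lemma: left module structure} applied to $\underlying{\phi}$, and the identification $(\bar{f}\cp\underlying{{\calQ}})\cp_{\underlying{{\calQ}}}\mathcal{B}\cong\bar{f}\cp\mathcal{B}$) correctly fill in what the paper leaves implicit.
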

\begin{proof}
This is an application of Lemma~\ref{lemma: underlying adjunction} and follows by taking $\mathcal{M} = \underlying{\mathcal{P}}$, $\mathcal{N} = \underlying{{\calQ}}$, and $X = \bar{f} \cp \underlying{{\calQ}}$.
\end{proof}

\begin{remark}
Suppose $(f,\phi):\mathcal{P}\to{\calQ}$ is a map of operads and that $\mathcal{B}$ is a ${\calQ}$-algebra.
We can either restrict the action, considering $\mathcal{B}$ as a $\underlying{{\calQ}}$-algebra, and then apply the functor $\underlying{\phi}^*$, or we can apply $\phi^*$ to $\mathcal{B}$ as in Remark~\ref{remark: usual adjunction between algebras} and then restrict to get a $\underlying{\mathcal{P}}$-algebra. These two procedures coincide.
\end{remark}

\begin{remark}[The adjunctions $\phi_!\dashv \phi^*\dashv \phi_*$ in the categorical situation]
\label{remark: categorical string of adjunctions}
Let $(f,\phi)$ be a map of categories from $\mathcal{P}$ to ${\calQ}$, where ${\calQ}$ has color set $\colorsb$.

By Remark~\ref{remark: right module structure}, ${\calQ}\cp f$ has a ${\calQ}\text{-}\mathcal{P}$ bimodule structure. 
The functor $\threehoms{{\calQ}}{{\calQ}\cp f}{-}$ is naturally isomorphic to $\phi^*$. This can be seen at the level of underlying collections by the following formal manipulation which uses Lemma~\ref{lemma color map adjunctions}:
\begin{align*}
\threehoms{{\calQ}}{{\calQ}\cp f}{\mathcal{B}} 
& \cong \threehoms{{\calQ}}{{\calQ}}{\bar{f}\cp \mathcal{B}}
\\&\cong \twohoms{\mathbf{1}_{\colorsb}}{\bar{f}\cp \mathcal{B}}
\\&\cong \bar{f}\cp \mathcal{B}.
\end{align*}
A diagram chase verifies that the $\mathcal{P}$-actions coincide.
This implies that $\phi^*$ has a left adjoint $\phi_!$ (of course, we already knew this by Remark~\ref{remark: usual adjunction between algebras}).

By Lemma~\ref{lemma: left module structure} $\bar{f}\cp {\calQ}$ has a $\mathcal{P}\text{-}{\calQ}$ bimodule structure.
As we have seen, the functor $(\bar{f}\cp {\calQ})\cp_{{\calQ}}(-)$ from ${\calQ}$-algebras to $\mathcal{P}$-algebras \emph{also} has underlying functor $\bar{f}\cp(-)$ and in fact is also naturally isomorphic to $\phi^*$.
This shows via Lemma~\ref{lemma: underlying adjunction} that $\phi^*$ also has a right adjoint $\phi_*$.

This flexibility in presentation allows us to choose whether to present $\phi^*$ as manifestly a left adjoint or manifestly a right adjoint and justifies the existence of adjoints to $\phi^*$ on both sides in the categorical context.
\end{remark}
In the operadic context, we have already presented $\phi^*$ as `manifestly a right adjoint' (in the sense of the preceding remark) in Remark~\ref{remark: usual adjunction between algebras}.
In the coming sections, we will attempt to present $\phi^*$ as `manifestly a left adjoint', which will lead us to a criterion which permits us to upgrade the relevant presentation from the categorical context.

\section{Main theorem}
\label{section: theorem statements}
In this section we state our main results in generality. 
These appear as Theorem~\ref{theorem: necessary} and Theorem~\ref{theorem: sufficient} which are, respectively, generalizations of the necessity and sufficiency portions of Theorem~\ref{theorem: main theorem, monochrome}.
Several examples are given in Section~\ref{section: examples}.
\begin{lemma}
\label{lemma: extension morphism exists}
Suppose $(f,\phi):\mathcal{P}\to {\calQ}$ is a map of colored operads and let $\lambda$ be the induced left action of $\mathcal{P}$ on $\bar{f} \cp {\calQ}$ from Lemma~\ref{lemma: left module structure}.
The composition 
\[
\mathcal{P} \cp \bar{f}\cp  \underlying{{\calQ}} \to \mathcal{P} \cp \bar{f} \cp {\calQ} \xrightarrow{\lambda} \bar{f} \cp {\calQ}
\]
descends to a morphism
\[
\mathcal{P}\cp_{\underlying{\mathcal{P}}}(\bar{f} \cp \underlying{{\calQ}})\to \bar{f} \cp {\calQ}.
\]
\end{lemma}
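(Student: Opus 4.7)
The plan is to verify that the asserted composition coequalizes the two parallel arrows defining the reflexive coequalizer $\mathcal{P} \cp_{\underlying{\mathcal{P}}} (\bar{f} \cp \underlying{{\calQ}})$, reducing everything to the associativity of the left $\mathcal{P}$-action $\lambda$ on $\bar{f} \cp {\calQ}$ already established in Lemma~\ref{lemma: left module structure}.

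First I would unpack the coequalizer. By Notation~\ref{notation: cp sub Q}, together with the fact that $\underlying{\mathcal{P}}$ is a suboperad of $\mathcal{P}$ via an inclusion $j$, the two parallel maps out of $\mathcal{P} \cp \underlying{\mathcal{P}} \cp \bar{f} \cp \underlying{{\calQ}}$ that need to be coequalized are $\mu_{\mathcal{P}} \cp \bar{f} \cp \underlying{{\calQ}}$ (precomposed with $j$ on the appropriate factor) and $\mathcal{P} \cp \lambda'$, where $\lambda' \colon \underlying{\mathcal{P}} \cp \bar{f} \cp \underlying{{\calQ}} \to \bar{f} \cp \underlying{{\calQ}}$ is produced by applying Lemma~\ref{lemma: left module structure} to the underlying category map $\underlying{\phi} \colon \underlying{\mathcal{P}} \to \underlying{{\calQ}}$. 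So the task reduces to showing the two resulting composites into $\bar{f} \cp {\calQ}$ agree.

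The crux is the compatibility identity $(\bar{f} \cp \iota) \circ \lambda' = \lambda \circ (j \cp \bar{f} \cp \iota)$, where $\iota \colon \underlying{{\calQ}} \hookrightarrow {\calQ}$. This says that the restricted action on $\bar{f} \cp \underlying{{\calQ}}$ agrees with the restriction of the full action on $\bar{f} \cp {\calQ}$. It holds because, in the construction of Lemma~\ref{lemma: left module structure}, the 2-cell $\underlying{\psi}$ for $\underlying{\phi}$ is literally the restriction of the 2-cell $\psi$ for $\phi$ (they have the same components on unary inputs), and $\mu_{\underlying{{\calQ}}}$ is the restriction of $\mu_{{\calQ}}$.

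Once this compatibility is in place, both composites out of $\mathcal{P} \cp \underlying{\mathcal{P}} \cp \bar{f} \cp \underlying{{\calQ}}$ can be rewritten, using $j$ and $\iota$, as composites through $\mathcal{P} \cp \mathcal{P} \cp \bar{f} \cp {\calQ}$: one becomes $\lambda \circ (\mu_\mathcal{P} \cp \bar{f} \cp {\calQ}) \circ (\mathcal{P} \cp j \cp \bar{f} \cp \iota)$ and the other becomes $\lambda \circ (\mathcal{P} \cp \lambda) \circ (\mathcal{P} \cp j \cp \bar{f} \cp \iota)$. These agree by associativity of $\lambda$. The only real obstacle is verifying the compatibility identity cleanly; for that I would pass through the adjoint presentation of operad maps from Remark~\ref{remark: alt pres col op map} and check that $\underlying{\psi}$ and $\psi$ coincide where their domains overlap, which is immediate from the way $\underlying{\phi}$ is constructed from $\phi$.
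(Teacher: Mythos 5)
Your proposal is correct and follows essentially the same route as the paper's own (much terser) proof: the paper likewise reduces the claim to the $\mathcal{P}$-module (associativity) relation for $\lambda_\phi$ together with the observation that $\lambda_{\underlying{\phi}}$ is the restriction of $\lambda_\phi$, which is exactly your compatibility identity. Your write-up simply spells out the details that the paper leaves implicit.
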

\begin{proof}
The domain of the desired morphism, as in Notation~\ref{notation: cp sub Q}, is the coequalizer of the diagram
\[
\begin{tikzcd}[column sep=1.8cm]
\mathcal{P}\cp \underlying{\mathcal{P}} \cp \bar{f} \cp \underlying{{\calQ}}
\rar[shift left]{\mu_{\mathcal{P}}\cp \bar{f} \cp \underlying{{\calQ}}}
\rar[shift right][swap]{\mathcal{P}\cp \lambda_{\underlying{\phi}}}
&
\mathcal{P}\cp \bar{f} \cp \underlying{{\calQ}}.
\end{tikzcd}
\]
Descent to the coequalizer is implied by the $\mathcal{P}$-module relation on $\lambda_\phi$ and the fact that $\lambda_{\underlying{\phi}}$ is the restriction of $\lambda_\phi$.
\end{proof}
\begin{definition}
Let $(f,\phi):\mathcal{P}\to {\calQ}$ be a map of colored operads. 
We call the morphism
\[
\mathcal{P}\cp_{\underlying{\mathcal{P}}}(\bar{f} \cp \underlying{{\calQ}})\to \bar{f} \cp {\calQ}
\]
guaranteed by Lemma~\ref{lemma: extension morphism exists} the \emph{extension morphism} of $(f,\phi)$.
We say $(f,\phi)$ \emph{is a categorical extension}  if the extension morphism is an isomorphism. In this case, we call the inverse isomorphism the \emph{factorization isomorphism}. 
\end{definition}
The factorization isomorphism has components (under adjunction) 
\[\phi(b_1,\ldots, b_n;f(a)):{\calQ}(b_1,\ldots,b_n;f(a))\to (f \cp \mathcal{P}\cp_{\underlying{\mathcal{P}}} \bar{f} \cp \underlying{{\calQ}})(b_1,\ldots, b_n;a).
\]

It is a direct verification that the notion of monoidal extension from Definition~\ref{definition: categorical extension, monochrome} is just the special case of categorical extension when $f$ is the identity function on the point.

\begin{remark}[Assumptions on the ground category]\label{remark ground category}
So far in this paper, we have been working with a bicomplete symmetric monoidal closed category $\mathcal{E}$ (that is, $\mathcal{E}$ is a B\'enabou cosmos).
This is strong enough to establish sufficiency in Theorem~\ref{theorem: main theorem, monochrome}.
Our method of proof to establish necessity goes as follows: for each profile $\vprof{\ub}{a}$, we use the fact that $\phi^*$ preserves colimits to show that a certain map in $\mathcal{E}$ is an isomorphism, and then exhibit the extension morphism of $\phi$ at $\vprof{\ub}{a}$ as a summand of this isomorphism.
We cannot always use this to deduce that the extension morphism is an isomorphism, for instance if $\mathcal{E} = \mathsf{Set}\times \mathsf{Set}^{\oprm}$ (equipped with a Chu-type tensor product).
For questions of necessity, we thus assume that the binary coproduct functor
\begin{align*}
	\mathcal{E} \times \mathcal{E} &\to \mathcal{E} \\
	X, Y &\mapsto X \amalg Y
\end{align*}
is a conservative functor.
That is, if $f\amalg g$ is an isomorphism, then $f$ and $g$ are also isomorphisms\footnote{In general, a functor is \emph{conservative} if the only morphisms it sends to isomorphisms are themselves isomorphisms.}.
We will simply say that \emph{coproducts are conservative} whenever this is the case.
See~\cite[Proposition 3.2]{Borger:DRPC} for some equivalent characterizations of this assumption.
\end{remark}

Coproducts are conservative in many commonly used ground categories, including sets, topological spaces, $R$-modules, spectra, and presheaf categories.

\begin{theorem}
\label{theorem: necessary}
Suppose that coproducts are conservative in $\mathcal{E}$.
Let $(f,\phi)$ be a map of operads $\mathcal{P}\to {\calQ}$.
If the restriction functor $\phi^*$ from ${\calQ}$-algebras to $\mathcal{P}$-algebras is a left adjoint, then $(f,\phi)$ is a categorical extension.
\end{theorem}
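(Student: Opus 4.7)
My plan is to exhibit the extension morphism at each profile $\vprof{\ub}{a}$ as a summand of a single isomorphism in $\mathcal{E}$, and then to invoke the conservativity assumption to deduce that this summand is itself invertible. The isomorphism in question arises because $\phi^*$, being a left adjoint by hypothesis, preserves coproducts of $\calQ$-algebras.

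To set things up, fix a profile $\vprof{\ub}{a}$ with $\ub = (b_1, \dots, b_n)$, and let $\delta_{b_i}$ denote the $\colorsb$-object whose value at $b_i$ is $\mathbf{1}$ and at every other color is $\initialE$. Setting $Y = \coprod_{i=1}^{n} \delta_{b_i}$, the free $\calQ$-algebra $\freeq(Y)$ is the coproduct in $\calQ$-algebras of the individual $\freeq(\delta_{b_i})$ because $\freeq$ is a left adjoint. Applying $\phi^*$ then gives an isomorphism of $\mathcal{P}$-algebras between $\phi^*\freeq(Y)$ and the coproduct of the $\phi^*\freeq(\delta_{b_i})$ in $\mathcal{P}$-algebras. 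Both sides inherit a natural $\mathbb{N}^n$-grading by multi-degree in the formal generators, and the isomorphism respects this grading because each insertion $\freeq(\delta_{b_i}) \to \freeq(Y)$ is graded.

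The next step is to identify the multilinear component (multi-degree $(1,\dots,1)$) at color $a$ on each side. On the left, the standard formula for a free operadic algebra picks out ${\calQ}(\ub;c)$ as the multi-degree $(1,\dots,1)$ piece of $\freeq(Y)_c$, so the multilinear component of $\phi^*\freeq(Y)_a$ is $(\bar f \cp \calQ)\vprof{\ub}{a}$. On the right, the degree-one piece of $\phi^*\freeq(\delta_{b_i})$ is the $\underlying{\mathcal{P}}$-algebra $(\bar f \cp \underlying{\calQ})\vprof{b_i}{-}$. Analyzing the $\mathcal{P}$-algebra coproduct in multi-degree $(1,\dots,1)$, one sees that the only relations from the coproduct construction that survive in this degree come from arity-one $\mathcal{P}$-operations acting on a single factor: substituting any higher-arity $\mathcal{P}$-operation into the expression would use more than one generator from some factor and push the multi-degree strictly above $(1,\dots,1)$. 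This identifies the multilinear piece on the right with $(\mathcal{P}\cp_{\underlying{\mathcal{P}}} \bar f \cp \underlying{\calQ})\vprof{\ub}{a}$, and unpacking the definitions the induced map between the multilinear summands is precisely the extension morphism of $(f,\phi)$ at $\vprof{\ub}{a}$.

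Conservativity of coproducts then finishes the argument: the ambient isomorphism at color $a$ decomposes as a coproduct over $\mathbb{N}^n$ of maps between graded pieces, and conservativity forces each piece, including the multilinear one, to be an isomorphism. Since the profile was arbitrary, the extension morphism is everywhere invertible and $(f,\phi)$ is a categorical extension. I expect the main obstacle to be the identification in the preceding paragraph of the multilinear piece inside the $\mathcal{P}$-algebra coproduct: coproducts of general $\mathcal{P}$-algebras do not have a tidy closed form, and one has to verify carefully that precisely the arity-one $\mathcal{P}$-relations participate in multi-degree $(1,\dots,1)$, so that what remains assembles visibly into the $\cp_{\underlying{\mathcal{P}}}$ appearing in the definition of the extension morphism.
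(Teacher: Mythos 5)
Your overall strategy coincides with the paper's: realize the extension morphism at a profile $\vprof{\ub}{a}$ as the multilinear (multi-degree $(1,\dots,1)$) summand of the comparison map $\coprod_i \phi^*\freeq(\delta_{b_i}) \to \phi^*\coprod_i\freeq(\delta_{b_i})$ between coproducts of free $\calQ$-algebras, and then invoke conservativity of coproducts. But there is a genuine gap at precisely the point you flag as the main obstacle, and it is not just bookkeeping: your identification of the multi-degree $(1,\dots,1)$ piece of the $\mathcal{P}$-algebra coproduct with $(\mathcal{P}\cp_{\underlying{\mathcal{P}}}\bar{f}\cp\underlying{\calQ})\vprof{\ub}{a}$ fails whenever $\mathcal{P}$ or $\calQ$ has nontrivial arity-zero operations. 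Each $\phi^*\freeq(\delta_{b_i})$ then has a nontrivial degree-zero part (coming from $\calQ(\,;-)$), and the initial $\mathcal{P}$-algebra contributes further degree-zero elements; a higher-arity operation of $\mathcal{P}$ can be applied to the $n$ generators together with any number of such degree-zero elements without pushing the multi-degree above $(1,\dots,1)$. So it is not true that only arity-one $\mathcal{P}$-operations participate in that degree, and the multilinear piece of the coproduct is strictly larger than the source of the extension morphism.

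The paper repairs this in two steps that your proposal is missing. First, the arity-zero components of the extension morphism are handled separately: preservation of the initial object gives $\mathcal{P}(\,;a)\cong\calQ(\,;f(a))$ directly. Second, one passes to the positive operads $\mathcal{P}_+$ and $\calQ_+$ obtained by killing arity-zero operations, where your multi-degree analysis does go through; but this reduction itself requires proving that $\phi_+^*$ preserves binary coproducts (Lemma~\ref{lemma: restriction to no arity zero}), which is deduced from the hypotheses on $\phi^*$ via a compatibility of restriction with induction along $\mathcal{P}_+\to\mathcal{P}$ and $\calQ_+\to\calQ$, and again uses preservation of initial objects and conservativity. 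A smaller omission: even in the positive case one must check that both parallel maps in the reflexive-coequalizer presentation of the algebra coproduct (operadic composition on one hand, the algebra action on the other) respect the multilinear decomposition; this is the content of the paper's homogeneity lemmas (Lemmas~\ref{lemma functoriality of multilinear summand} and~\ref{lemma hl summands}), and positivity is used there as well.
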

This theorem will be proven in Section~\ref{section: necessary}. 
For now, we turn to sufficiency of this criterion, giving a construction of what will turn out to be the right adjoint to $\phi^*$.

\begin{construction}
\label{construction:right adjoint}
Let $(f,\phi)$ be a map of colored operads $\mathcal{P}\to{\calQ}$. 
Recall that $\bar{f} \cp \underlying{{\calQ}}$ is a $\underlying{\mathcal{P}}\text{-}\underlying{{\calQ}}$ bimodule.
Then $\threehoms{\underlying{\mathcal{P}}}{\bar{f} \cp \underlying{{\calQ}}}{-}$ is a functor from $\mathcal{P}$-algebras (or $\underlying{\mathcal{P}}$-algebras) to $\underlying{{\calQ}}$-algebras.
Now suppose $(f, \phi)$ is a categorical extension. 
In this case, we equip the functor $\threehoms{\underlying{\mathcal{P}}}{\bar{f} \cp \underlying{{\calQ}}}{-}$ with a natural transformation $\alpha:({\calQ}\cp (-))\circ \threehoms{\underlying{\mathcal{P}}}{\bar{f} \cp \underlying{{\calQ}}}{-}\to \threehoms{\underlying{\mathcal{P}}}{\bar{f} \cp \underlying{{\calQ}}}{-}$ as follows.
\[
\begin{tikzcd}
{\calQ}\cp \threehoms{\underlying{\mathcal{P}}}{\bar{f} \cp \underlying{{\calQ}}}{\mathcal{A}}
\dar["\text{unit of the adjunction of Corollary~\ref{cor: underlying adjunction}}"]
\\ 
\threehoms{\underlying{\mathcal{P}}}{\bar{f} \cp \underlying{{\calQ}}}{\bar{f} \cp {\calQ}\cp \threehoms{\underlying{\mathcal{P}}}{\bar{f} \cp \underlying{{\calQ}}}{\mathcal{A}}}
\dar[color=blue, dashed, bend left,"\color{black}\text{factorization isomorphism}"]
\\
\uar[color=red, dashed, bend left, "\color{black}\cong"]
\threehoms{\underlying{\mathcal{P}}}{\bar{f} \cp \underlying{{\calQ}}}{\mathcal{P}\cp_{\underlying{\mathcal{P}}} \bar{f} \cp \underlying{{\calQ}}\cp \threehoms{\underlying{\mathcal{P}}}{\bar{f} \cp \underlying{{\calQ}}}{\mathcal{A}}}
\dar["\text{evaluation}"]\\
\threehoms{\underlying{\mathcal{P}}}{\bar{f} \cp \underlying{{\calQ}}}{\mathcal{P}\cp_{\underlying{\mathcal{P}}} \mathcal{A}}
\dar["\mathcal{P}\text{-algebra structure on }\mathcal{A}"]\\
\threehoms{\underlying{\mathcal{P}}}{\bar{f} \cp \underlying{{\calQ}}}{\mathcal{A}}.
\end{tikzcd}\]
\end{construction}
We emphasize that it is not immediate that $\threehoms{\underlying{\mathcal{P}}}{\bar{f} \cp \underlying{{\calQ}}}{\mathcal{A}}$ is even a $\mathcal{Q}$-algebra when equipped with this structure.
We will return to this in Section~\ref{section: sufficient}.
\begin{remark}
\label{remark: adjoint formula for action}
By the adjunction of Corollary~\ref{cor: underlying adjunction}, specifying a morphism from the $\underlying{{\calQ}}$-algebra ${\calQ}\cp \threehoms{\underlying{\mathcal{P}}}{\bar{f} \cp \underlying{{\calQ}}}{\mathcal{A}}$ to the enriched morphism object $\threehoms{\underlying{\mathcal{P}}}{\bar{f} \cp \underlying{{\calQ}}}{\mathcal{A}}$ is equivalent to giving a $\underlying{\mathcal{P}}$-algebra morphism
\[
\bar{f} \cp \underlying{{\calQ}}\cp_{\underlying{{\calQ}}} {\calQ}\cp \threehoms{\underlying{\mathcal{P}}}{\bar{f} \cp \underlying{{\calQ}}}{\mathcal{A}}\to \mathcal{A}.
\]
The formula for the adjoint of $\alpha$ is somewhat cleaner:
\[
\begin{tikzcd}
\bar{f} \cp {\calQ}\cp \threehoms{\underlying{\mathcal{P}}}{\bar{f} \cp \underlying{{\calQ}}}{\mathcal{A}}
\dar[color=blue, dashed, bend left,"\color{black}\text{factorization isomorphism}"]
\\
\uar[color=red, dashed, bend left, "\color{black}\cong"]
\mathcal{P}\cp_{\underlying{\mathcal{P}}} \bar{f} \cp \underlying{{\calQ}}\cp \threehoms{\underlying{\mathcal{P}}}{\bar{f} \cp \underlying{{\calQ}}}{\mathcal{A}}
\dar{\text{evaluation}} 
\\
\mathcal{P}\cp_{\underlying{\mathcal{P}}}\mathcal{A}
\dar["\mathcal{P}\text{-algebra structure on }\mathcal{A}"]\\
\mathcal{A}.
\end{tikzcd}
\]
For the purposes of exposition, let us be even more concrete. 
Suppose $\mathcal{P}\to {\calQ}$ is a monoidal extension of monochrome operads in sets, and that $\mathcal{A}$ is a $\mathcal{P}$-algebra.
Let us describe the recipe for how an operation $\mu\in {\calQ}(n)$ acts on $n$ $\mathcal{P}(1)$-equivariant functions $f_1,\ldots, f_n$, from ${\calQ}(1)$ to $\mathcal{A}$ to give a new such function $f_0$. 
To apply $f_0$ to $q\in {\calQ}(1)$, we use the factorization morphism to write $q\cp \mu$ as $\mu'\times (q_1,\ldots, q_n)$, where $\mu'$ is in $\mathcal{P}(n)$ and $q_i$ is in ${\calQ}(1)$.
Then $f_0(q)= \mu(f_1(q_1),\ldots,f_n(q_n))$.
\end{remark}
\begin{lemma}
\label{lemma: Q-algebra structure and functor}
Suppose $(f,\phi)$ is a categorical extension.
Then Construction~\ref{construction:right adjoint} lifts the functor $\underlying{\phi}_* = \threehoms{\underlying{\mathcal{P}}}{\bar{f} \cp \underlying{{\calQ}}}{-}$ along the restriction functor from ${\calQ}$-algebras to $\underlying{{\calQ}}$-algebras to a functor $\phi_*$ from $\mathcal{P}$-algebras to ${\calQ}$-algebras.
\end{lemma}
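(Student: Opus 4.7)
The plan is to check three things in order: (a) the natural transformation $\alpha$ of Construction~\ref{construction:right adjoint} equips $\mathcal{B} := \threehoms{\underlying{\mathcal{P}}}{\bar{f}\cp\underlying{{\calQ}}}{\mathcal{A}}$ with a ${\calQ}$-algebra structure; (b) restricting this action along $\underlying{{\calQ}}\hookrightarrow{\calQ}$ recovers the $\underlying{{\calQ}}$-algebra structure built in Lemma~\ref{lemma: underlying adjunction}; (c) a morphism of $\mathcal{P}$-algebras $\mathcal{A}\to\mathcal{A}'$ induces a morphism of the resulting ${\calQ}$-algebras. Throughout, I would work with the \emph{adjoint} formula for $\alpha$ from Remark~\ref{remark: adjoint formula for action}, since it packages the whole construction as a single composite involving the factorization isomorphism, evaluation at $\mathcal{B}$, and the $\mathcal{P}$-action on $\mathcal{A}$.

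For unit compatibility, I would restrict the adjoint composite along $\bar{f}\cp\eta_{{\calQ}}\cp\mathcal{B}$. By Lemma~\ref{lemma: left module structure} the $\mathcal{P}$-module map $\lambda$ sends $\bar{f}\cp\eta_{{\calQ}}$ to $\bar{f}\cp\eta_{{\calQ}}$, so the extension morphism sends the subobject $\mathbf{1}_\colorsa\cp_{\underlying{\mathcal{P}}}(\bar{f}\cp\underlying{{\calQ}})$ to $\bar{f}\cp\eta_{{\calQ}}\cdot$; inverting, the factorization isomorphism applied to $\bar{f}\cp\eta_{{\calQ}}\cp\mathcal{B}$ is the canonical inclusion $\bar{f}\cp\underlying{{\calQ}}\cp\mathcal{B}\hookrightarrow\mathcal{P}\cp_{\underlying{\mathcal{P}}}\bar{f}\cp\underlying{{\calQ}}\cp\mathcal{B}$ induced by $\eta_\mathcal{P}$. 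Evaluation followed by the unit clause of the $\mathcal{P}$-action then yields the counit of the Lemma~\ref{lemma: underlying adjunction} adjunction, whose adjoint is the identity on $\mathcal{B}$.

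The main obstacle is associativity: one must check that the two maps $\bar{f}\cp{\calQ}\cp{\calQ}\cp\mathcal{B}\rightrightarrows\mathcal{A}$ obtained by adjoining $\alpha\circ({\calQ}\cp\alpha)$ and $\alpha\circ(\mu_{{\calQ}}\cp\mathcal{B})$ agree. I would expand both sides into the diagram
\[
\begin{tikzcd}[column sep=small,row sep=small]
\bar{f}\cp{\calQ}\cp{\calQ}\cp\mathcal{B} \ar[r,"\cong"]\ar[d,"\bar{f}\cp\mu_{{\calQ}}\cp\mathcal{B}" swap] & \mathcal{P}\cp_{\underlying{\mathcal{P}}}\bar{f}\cp\underlying{{\calQ}}\cp{\calQ}\cp\mathcal{B} \ar[r] & \mathcal{P}\cp_{\underlying{\mathcal{P}}}\mathcal{P}\cp_{\underlying{\mathcal{P}}}\bar{f}\cp\underlying{{\calQ}}\cp\mathcal{B}\ar[d,"\mu_\mathcal{P}"]\ar[r] & \mathcal{P}\cp_{\underlying{\mathcal{P}}}\mathcal{P}\cp_{\underlying{\mathcal{P}}}\mathcal{A} \ar[d]\\
\bar{f}\cp{\calQ}\cp\mathcal{B} \ar[rr,"\cong" swap] & & \mathcal{P}\cp_{\underlying{\mathcal{P}}}\bar{f}\cp\underlying{{\calQ}}\cp\mathcal{B} \ar[r] & \mathcal{P}\cp_{\underlying{\mathcal{P}}}\mathcal{A}\ar[d]\\
& & & \mathcal{A}
\end{tikzcd}
\]
(with the iterated factorization isomorphism on top). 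The leftmost rectangle commutes because the extension morphism is a morphism of left $\mathcal{P}$-modules --- which is exactly the content of Lemma~\ref{lemma: extension morphism exists} combined with the $\mathcal{P}$-module axiom for $\lambda$ --- so its inverse intertwines $\mu_{{\calQ}}$ with the iterated factorization followed by $\mu_\mathcal{P}$. The middle square is naturality of the factorization isomorphism applied to the evaluation $\bar{f}\cp\underlying{{\calQ}}\cp\mathcal{B}\to\mathcal{A}$. The rightmost square is associativity of the $\mathcal{P}$-action on $\mathcal{A}$. Assembling these three commutations is the technical core.

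For (b), one observes that when ${\calQ}$ is replaced by its underlying category $\underlying{{\calQ}}$ the extension morphism is tautologically the identity, so the factorization step in $\alpha$ drops out and the construction reduces to the unit-evaluation composite defining the $\underlying{{\calQ}}$-action from the proof of Lemma~\ref{lemma: underlying adjunction}. For (c), each constituent of $\alpha$ --- the unit of the Corollary~\ref{cor: underlying adjunction} adjunction, the factorization isomorphism (which does not depend on $\mathcal{A}$), evaluation, and the $\mathcal{P}$-action --- is manifestly natural in the $\mathcal{P}$-algebra variable, so the induced map between underlying $\underlying{{\calQ}}$-algebras commutes with $\alpha$, hence is a ${\calQ}$-algebra map.
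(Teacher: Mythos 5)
Your proposal is correct and follows essentially the same route as the paper: transpose everything across the adjunction $\underlying{\phi}^*\dashv\underlying{\phi}_*$ and reduce associativity of $\alpha$ to (i) compatibility of the factorization isomorphism with $\mu_{\mathcal{P}}$ and $\mu_{{\calQ}}$ (your left rectangle, which the paper derives from the pentagon axiom for the operad map --- equivalent to your ``extension morphism is a map of left $\mathcal{P}$-modules,'' since that in turn rests on associativity of $\lambda_\phi$), (ii) naturality, and (iii) associativity of the $\mathcal{P}$-action on $\mathcal{A}$; you additionally spell out unitality and the compatibility with the underlying $\underlying{{\calQ}}$-structure, which the paper leaves implicit. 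The one place where ``expand both sides'' conceals genuine work is the identification of the top--right path of your diagram with the adjoint of $\alpha\circ({\calQ}\cp\alpha)$: because $\alpha$ is itself \emph{defined} as the adjunct of $\hat\alpha$, unwinding the inner $\alpha$ requires the triangle identity $\epsilon L\circ L\eta=\id$ together with naturality of the counit and of the factorization isomorphism, and this verification is exactly what occupies the large diagram of Figure~\ref{figure: big comm diagram} in the paper; it should be recorded rather than absorbed into the word ``expand.''
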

Armed with this lemma, we have functors $\phi^*$ and $\phi_*$ between the categories of $\mathcal{P}$-algebras and ${\calQ}$-algebras. 
The functor $\phi^*$ lies over the functor $\underlying{\phi}^* \cong \bar{f} \cp(-)$ between $\underlying{\mathcal{P}}$-algebras and $\underlying{{\calQ}}$-algebras, and the functor $\phi_*$ lies over its right adjoint $\underlying{\phi}_* = \threehoms{\underlying{\mathcal{P}}}{\bar{f} \cp \underlying{{\calQ}}}{-}$.
\begin{theorem}
\label{theorem: sufficient}
Suppose $(f,\phi)$ is a categorical extension between operads $\mathcal{P}$ and ${\calQ}$.
Then the adjunction $\underlying{\phi}^* \dashv  \underlying{\phi}_*$ of Corollary~\ref{cor: underlying adjunction} underlies an adjunction $\phi^*\dashv \phi_*$ between $\mathcal{P}$-algebras and ${\calQ}$-algebras.
\end{theorem}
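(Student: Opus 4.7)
The plan is to invoke the underlying adjunction $\underlying{\phi}^*\dashv\underlying{\phi}_*$ of Corollary~\ref{cor: underlying adjunction} and show that the resulting Hom bijection restricts to one between morphism sets in the richer categories $\algebras{\mathcal{P}}$ and $\algebras{{\calQ}}$. This is sufficient because the forgetful functors $\algebras{\mathcal{P}} \to \algebras{\underlying{\mathcal{P}}}$ and $\algebras{{\calQ}} \to \algebras{\underlying{{\calQ}}}$ are faithful: a morphism between $\underlying{\mathcal{P}}$-algebras extends (in at most one way) to a morphism of $\mathcal{P}$-algebras, namely when it additionally intertwines the operations of arity $\geq 2$. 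Triangle identities for $\phi^* \dashv \phi_*$ then come for free from those of the underlying adjunction.

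Concretely, for a ${\calQ}$-algebra $\mathcal{B}$ and a $\mathcal{P}$-algebra $\mathcal{A}$, the task is to show that a $\underlying{{\calQ}}$-algebra morphism $g\colon\mathcal{B}\to \phi_*\mathcal{A}$ is ${\calQ}$-equivariant if and only if its adjoint $g^\vee\colon \phi^*\mathcal{B}\to \mathcal{A}$ is $\mathcal{P}$-equivariant. The two key ingredients are Remark~\ref{remark: adjoint formula for action} and Lemma~\ref{lemma: left module structure}. By the remark, the adjoint of the ${\calQ}$-action $\alpha$ on $\phi_*\mathcal{A}$ factors through the factorization isomorphism $\bar{f}\cp {\calQ} \xleftarrow{\cong} \mathcal{P}\cp_{\underlying{\mathcal{P}}} \bar{f}\cp \underlying{{\calQ}}$ followed by evaluation and the $\mathcal{P}$-action on $\mathcal{A}$. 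By the lemma, the $\mathcal{P}$-action on $\phi^*\mathcal{B}\cong \bar{f}\cp\mathcal{B}$ is induced from the same left $\mathcal{P}$-module structure on $\bar{f}\cp {\calQ}$, so it too is mediated by the factorization isomorphism followed by the ${\calQ}$-action on $\mathcal{B}$.

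Given these compatible presentations, transposing the ${\calQ}$-equivariance square for $g$ across the underlying adjunction and applying the factorization isomorphism produces, after naturality, precisely the $\mathcal{P}$-equivariance square for $g^\vee$; running the argument in reverse gives the converse. The main obstacle is purely bookkeeping: one must carefully thread the factorization isomorphism and the adjunction transpose through two multi-step equivariance diagrams. However, since $\alpha$ was built in Construction~\ref{construction:right adjoint} precisely so that its adjoint is the composite of the factorization isomorphism with the $\mathcal{P}$-action, the diagram chase is essentially forced and introduces no new ideas beyond those already recorded in the setup.
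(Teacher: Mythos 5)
Your proposal is correct and follows essentially the same route as the paper: the paper packages the key step as showing that the counit $\phi^*\phi_*\mathcal{A}\to\mathcal{A}$ and unit $\mathcal{B}\to\phi_*\phi^*\mathcal{B}$ of the underlying adjunction are morphisms of $\mathcal{P}$- and ${\calQ}$-algebras respectively (Lemmas~\ref{lemma: counit calculation} and~\ref{lem: unit calculation}), which is exactly the special case of your restricted hom-bijection at identity morphisms, and the diagram chases in both versions rest on the same ingredients (the factorization isomorphism mediating both actions via Remark~\ref{remark: adjoint formula for action} and Lemma~\ref{lemma: left module structure}). The paper likewise concludes by citing faithfulness of the forgetful functors to obtain naturality and the triangle identities for free.
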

Lemma~\ref{lemma: Q-algebra structure and functor} and Theorem~\ref{theorem: sufficient} will be proven in Section~\ref{section: sufficient}.

\section{Examples}
\label{section: examples}
In this section we provide a number of examples of maps of operads that satisfy our criterion, along with some non-examples. We arrange these in order of complexity of the colors involved, not independent interest or importance.

When discussing non-existence of right adjoints, we always restrict to ground categories $\mathcal{E}$ with conservative coproducts so that Theorem~\ref{theorem: necessary} holds.

\subsection{One-colored operads}
\label{section: one-colored examples}
The simplest examples are given by monochrome operads. In this case, as in Theorem~\ref{theorem: main theorem, monochrome}, the criterion on an operad map $\phi:\mathcal{P}\to {\calQ}$ is that
\[
\mathcal{P}\cp_{\mathcal{P}(1)}{\calQ}(1)\to{\calQ}
\]
is an isomorphism. 
\begin{example}
\label{example: arity one}
The most trivial case occurs when ${\calQ}(n)$ and $\mathcal{P}(n)$ are the initial object for $n\ne 1$, that is, the case of monoids. 
Then the criterion is automatically satisfied and the right adjoint from ${\calQ}(1)$-modules to $\mathcal{P}(1)$-modules is the classical base-change functor $\threehoms{\mathcal{P}(1)}{{\calQ}(1)}{-}$ with ${\calQ}(1)$ viewed as a $\mathcal{P}(1)\text{-}{\calQ}(1)$ bimodule.
This includes, for example, change of ring functors between modules over rings related by a ring homomorphism.
We have already seen a slightly more general version of this example as Remark~\ref{remark: categorical string of adjunctions}.
\end{example}
Here is a less trivial example in monochrome operads.
\begin{example}
\label{example: G and BV}
The Gerstenhaber and Batalin--Vilkovisky operads (hereafter $\ger$ and $\bv$) are algebraic models for the $E_2$ and framed $E_2$ operads in chain complexes over a field $\mathbf{k}$ of characteristic zero~\cite{Cohen:HCNS,Getzler:BVATDTFT}. There is an inclusion $\iota:\ger\to \bv$. 

It is well-known~\cite[Proof of Proposition 4.8]{Getzler:BVATDTFT} that the Batalin--Vilkovisky operad $\bv$ is isomorphic as a collection to $\ger\cp (\mathbf{k}[\Delta]/\Delta^2)$, where $\Delta$ is a unary, degree $1$ operator. Here $\mathbf{k}[\Delta]/\Delta^2$ is precisely $\bv(1)$, while $\ger(1)$ is just $\mathbf{k}$. 
The isomorphism between the collection underlying $\bv$ and the collection $\ger\cp (\mathbf{k}[\Delta]/\Delta^2)$ is precisely our factorization isomorphism.

What is the form of the right adjoint? According to our formula, for $X$ a Gerstenhaber algebra, the right adjoint $\iota_*(X)$ is $\twohoms{\mathbf{k}[\Delta]/\Delta^2}{X}$, with the internal hom computed in vector spaces. So the elements of degree $j$ in $\iota_*(X)$ are pairs $(x,y)$ where $x$ is in degree $j$ and $y$ in degree $j+1$. The product is given by
\[
(x_1,y_1)(x_2,y_2)=(x_1x_2, \{x_1,x_2\}+y_1x_2+(-1)^{|x_1|}x_1y_2)
\]
(for some appropriately chosen convention for the Gerstenhaber bracket $\{-,-\}$) and the Batalin--Vilkovisky operator is
\[
\Delta(x,y)=(y,0).
\]

This works in other dimensions and in other ground categories. 
For instance, the inclusion $\iota$ of the little $k$-balls operad into the framed little $k$-balls operad (as operads in a convenient, i.e., bicomplete and Cartesian closed, category of topological spaces) is a monoidal extension by $SO(k)$, so the right adjoint of $\iota$ applied to an algebra $X$ over the little $k$-balls operad yields the space of continuous functions $\operatorname{Map}(SO(k),X)$. 
According to our general setup, the action of an element $\rho$ of $SO(k)$ on a map $g:SO(k)\to X$ is then 
\[
\rho(g)(\sigma)=g(\sigma\cdot\rho).
\]
The action of $E_k^{\mathrm{fr}}(n)$ on a tuple of functions $(g_1,\ldots, g_n) \in \operatorname{Map}(SO(k),X)^{\times n}$ is given by the formula in Remark~\ref{remark: adjoint formula for action}.

This example works in various models, for example the classical model where $E_k(1)$ consists of pairs $(c,r)$ in $(\mathbb{R}^k,\mathbb{R}_+)$ with $\|c\|+r\le 1$ with product
\[
(c,r)\cdot (c',r') = (c+rc',rr')
\]
and $E_k^{\mathrm{fr}}(1)$ is triples $(c,r,\rho)$ in $(\mathbb{R}^k,\mathbb{R}_+,SO(k))$ with $\|c\|+r\le 1$ and product
\[
(c,r,\rho)\cdot (c',r',\rho') = (c+\rho(rc'),rr',\rho\rho').
\]
It is arguably easier to see the monoidal extension property if we instead
choose a point-set model where $E_k(1)$ is a single point and $E_k^{\mathrm{fr}}(1)$ is the group $SO(k)$.
See Figure~\ref{figure: extension for framed little disks} for an illustration of this case.

In particular, for $k=2$, the functor $\iota_*$ is the free loop space functor. 

\end{example}

\begin{figure}
\centering
\begin{tikzpicture}
\draw(0,0) circle [radius = 2cm];
\draw[fill](2,0) circle [radius = 3pt];
\node[below] at (0,-2) {$E_2(2)$};
\draw(-1,0) circle [radius = .6cm];
\node at (-1,0) {$1$};
\draw[fill](-.4,0) circle [radius = 3pt];
\draw(.8,-.8) circle [radius = .4cm];
\node at (.8,-.8) {$2$};
\draw[fill](1.2,-.8) circle [radius = 3pt];
\node[right] at (2.1,0) {$\times\left(\frac{\pi}{2},\frac{5\pi}{4}\right)\longrightarrow$};
\node[below] at (3,-2){$SO(2)^{\times 2}$};
\draw[->](3,-2) -- (3,-.3);
\begin{scope}[xshift = 6.4cm]
\draw(0,0) circle [radius = 2cm];
\draw[fill](2,0) circle [radius = 3pt];
\node[below] at (0,-2) {$E_2^{\mathrm{fr}}(2)$};
\draw(-1,0) circle [radius = .6cm];
\node at (-1,0) {$1$};
\draw[fill](-1,.6) circle [radius = 3pt];
\draw(.8,-.8) circle [radius = .4cm];
\node at (.8,-.8) {$2$};
\draw[fill](.51716,-1.08284) circle [radius = 3pt];
\end{scope}
\end{tikzpicture}
\caption{Extension isomorphism for framed little $2$-disks}
\label{figure: extension for framed little disks}
\end{figure}

\begin{example}
\label{example: dg P}
Let $\mathcal{P}$ be an operad in graded $R$-modules. 
For convenience assume that $\mathcal{P}$ is concentrated in arity at least two 
(with some change of notation the example works more generally).
There is an operad $\mathcal{P}_{\mathrm{dg}}$ in graded $R$-modules whose algebras are differential graded $\mathcal{P}$-algebras.
One way to construct the operad $\mathcal{P}_{\mathrm{dg}}$ is by adjoining a new arity one operation $d$ to $\mathcal{P}$ and imposing the relations that $d$ squares to zero and that $d$ is a graded derivation of every homogeneous operation of $\mathcal{P}$.

The ground category here is \emph{not} chain complexes of $R$-modules and the differential $d$ is an operation in the operad, not part of the structure of an object in the ground category. 

We can then recover $\mathcal{P}$ from $\mathcal{P}_{\mathrm{dg}}$ by quotienting away the operation $d$. 
This can be described as a map of operads $(\id,\phi) : \mathcal{P}_{\mathrm{dg}}\to\mathcal{P}$. 
The extension map is of the form
\[
\mathcal{P}_{\mathrm{dg}}\cp_{\underlying{\mathcal{P}_{\mathrm{dg}}}} \underlying{\mathcal{P}} \to \mathcal{P},
\] 
that is:
\[
\mathcal{P}_{\mathrm{dg}}\cp_{R[d]/d^2} R \to \mathcal{P}.
\]
where the action of $d$ in $R[d]/d^2$ on $R$ is trivial.
Then the left hand side here is the quotient by $d$ which we have already said is isomorphic to $\mathcal{P}$.
Thus $\phi$ is a monoidal extension and $\phi^*$ has both adjoints.

The pullback $\phi^*$ acts on a $\mathcal{P}$-algebra $\mathcal{A}$ by adjoining the zero differential.
The left adjoint $\phi_!$ acts on a differential graded $\mathcal{P}$-algebra $\mathcal{B}$ by quotienting $\mathcal{B}$ (viewed as a $\mathcal{P}$-algebra) by the ideal generated by the differential. 
The right adjoint $\phi_*$ takes a differential graded $\mathcal{P}$-algebra $\mathcal{B}$ to the kernel of $d$, which is still a $\mathcal{P}$-algebra essentially by the derivation property.
\end{example}

\begin{remark}
Examples~\ref{example: G and BV} and~\ref{example: dg P} do not meet Ward's more restrictive criteria~\cite[Proposition 7.9]{Ward:6OFGO}. 
One of his criteria in the fixed color situation is that $\underlying{\mathcal{P}}\to\underlying{{\calQ}}$ is an extension by a group action in each color. 
In both of these examples the extension is by non-invertible elements.
\end{remark}

\begin{example}
\label{example phi one isomorphism}
 A class of non-examples consists of maps of operads $\phi$ where $\phi(1):\mathcal{P}(1)\to{\calQ}(1)$ is an isomorphism.
For such operad maps $\phi$, the product $\mathcal{P}\cp_{\mathcal{P}(1)}{\calQ}(1)$ is isomorphic to $\mathcal{P}$ and the canonical map induced by $\phi$ is just $\phi$ under this identification. 
 So a right adjoint to $\phi^*$ exists if and only if $\phi$ is an isomorphism. 
 \end{example}

Thus if we restrict ourselves to operads whose arity one part consists of just the tensor unit, then the right adjoint $\phi_*$ nearly never exists.
This can be observed directly in many instances, for example there is no right adjoint to the functor from commutative algebras to associative algebras.

\subsection{Bijections on colors}
\label{section: bijection on colors examples}
Now we turn to colored operads, and maps of such which are bijections (or the identity) on color sets.
Our main examples of interest involve colored operads whose algebras are various variants of operads, cyclic operads, and modular operads. 
We have placed explanations of the $\mathsf{Set}$-operads 
$\coloredopfont{O}$, $\coloredopfont{C}$, and $\coloredopfont{M}$
and their variants in Appendix~\ref{appendix: examples of colored operads}.
All of the positive examples work for a general $\mathcal{E}$, by base-changing the colored operads in question along the cocontinuous functor $\mathsf{Set} \to \mathcal{E}$ which sends the point to $\mathbf{1}$.

Suppose we are given a map of operads $(f,\phi)$ from $\mathcal{P}$ to ${\calQ}$ where $f$ is the identity map on some color set $\colorsa$. 
Then the collections $f$ and $\bar{f}$ are both canonically isomorphic to $\mathbf{1}_{\colorsa}$ so $\bar{f} \cp (-)$ is canonically isomorphic to the identity. Then suppressing it in the notation, to be a categorical extension means that the extension morphism is an isomorphism:
\begin{equation*}
\mathcal{P}\cp_{\underlying{\mathcal{P}}}\underlying{{\calQ}}\xrightarrow{\cong} {\calQ}.
\end{equation*}
We conclude that Example~\ref{example phi one isomorphism} holds just as well in this setting, that is, if $\phi : \mathcal{P} \to {\calQ}$ is a bijection-on-colors operad map so that $\underlying{\phi} : \underlying{\mathcal{P}} \to \underlying{{\calQ}}$ is an isomorphism, then $\phi$ is a categorical extension if and only if $\phi$ is an isomorphism.

We turn now to a motivating example for this paper, the relation between operads and cyclic operads.
Recall the colored operads $\coloredopfont{O}$ and $\coloredopfont{C}_{\mathrm{GK}}$ from Example~\ref{example operad for operads} and Example~\ref{example cyclic operads} which have as their respective algebras operads and the cyclic operads of Getzler and Kapranov \cite{GetzlerKapranov:COCH}.
Both operads have as color set the positive integers, and the operations in each are certain isomorphism classes of ordered trees with at least one boundary component.
The difference is that $\coloredopfont{C}_{\mathrm{GK}}$ includes all such ordered trees, whereas $\coloredopfont{O}$ only includes the rooted trees (those with a downward flow).
\begin{lemma}\label{lemma O to CGK}
The map $\coloredopfont{O}\to\coloredopfont{C}_{\mathrm{GK}}$ is a categorical extension.
\end{lemma}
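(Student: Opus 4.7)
The plan is to check that the extension morphism
\[
\coloredopfont{O} \cp_{\underlying{\coloredopfont{O}}} \underlying{\coloredopfont{C}_{\mathrm{GK}}} \to \coloredopfont{C}_{\mathrm{GK}}
\]
is an isomorphism by constructing an explicit inverse at the level of trees. Both collections are presented, in each profile, as coproducts indexed by isomorphism classes of trees, so it suffices to produce a bijection between the indexing sets.

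For the inverse, given an ordered tree $T$ representing an element of $\coloredopfont{C}_{\mathrm{GK}}(n_1,\dots,n_k;n)$, its distinguished output boundary picks out a unique vertex $v_0$ of $T$. I would orient $T$ into a rooted tree $T^{r}$ by directing every edge toward $v_0$ and selecting, at each vertex $v_i$, the half-edge lying on the path to $v_0$ (or the distinguished boundary itself, when $v_i = v_0$) as the outgoing half-edge. Doing so promotes each cyclic corolla at $v_i$ to a rooted corolla; the underlying shape is unchanged, so $T^r$ is an element of $\coloredopfont{O}(n_1,\dots,n_k;n)$. The discrepancy between the original cyclic datum at $v_i$ and the newly chosen root is recorded by a unary cyclic operation $\tau_i \in \underlying{\coloredopfont{C}_{\mathrm{GK}}}(n_i;n_i)$, namely the rotation mapping the rooted configuration back to the cyclic one. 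Sending $T$ to the class of $(T^{r}; \tau_1, \dots, \tau_k)$ gives the candidate inverse.

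Two verifications remain. First, well-definedness modulo the coequalizer defining $\cp_{\underlying{\coloredopfont{O}}}$: alternative choices of labeling the inputs of $T^{r}$ (or of the non-root half-edges at each internal vertex of $T^r$) differ by an element of $\underlying{\coloredopfont{O}}$, which can be slid across $\cp_{\underlying{\coloredopfont{O}}}$ and absorbed into a compensating modification of the $\tau_i$. Second, checking that both round trips are identities reduces to a direct unwinding: grafting each $\tau_i$ onto the $i$th input vertex of $T^{r}$ precisely restores the cyclic labeling of $T$, and conversely re-rooting a grafted tree at its distinguished output recovers the data $(T^r; \tau_1,\dots,\tau_k)$ up to the relation above.

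The main obstacle is the first verification: tracking the $\Sigma_{n_i}$-actions on half-edges and confirming that their interaction with the operadic composition product is compatible with the coequalizer relation. With the explicit combinatorial descriptions of $\coloredopfont{O}$ and $\coloredopfont{C}_{\mathrm{GK}}$ (from the appendix), however, this reduces to a mechanical bookkeeping exercise rather than a substantive difficulty.
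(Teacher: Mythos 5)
Your argument is correct and is essentially the paper's own proof: both identify the unique preimage of an ordered tree $G$ in $\coloredopfont{C}_{\mathrm{GK}}$ as the rooted tree obtained by directing everything toward the first boundary element, with the discrepancy at each vertex absorbed into a unary operation (normalized, via the $\cp_{\underlying{\coloredopfont{O}}}$ relation, to a rotation of $\nbhd(v)$). The paper phrases this as uniqueness of the preimage rather than as an explicit inverse, but the content — and the level of detail left to bookkeeping — is the same.
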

\begin{proof}
Suppose that $G$ is an ordered tree with at least one boundary element.
A preimage of $G$ under the extension morphism will be an ordered rooted tree with the same underlying graph and the same orderings on $B(G)$ and $V(G)$.
Moreover, for each $v$, the two orderings on $\nbhd(v)$ will differ by a cyclic permutation.
There is exactly one possibility for such an ordered rooted tree in the preimage of $G$, hence the extension morphism is an isomorphism.
\end{proof}
This gives another construction of the following adjoint.
\begin{corollary}[Templeton~\cite{Templeton:SGO}]
The forgetful functor from cyclic operads to operads has a right adjoint, with formula given in Construction~\ref{construction:right adjoint}.
\end{corollary}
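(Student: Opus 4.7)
The plan is to prove the extension morphism is a bijection profilewise by explicitly constructing preimages. Since $f = \id$, the extension morphism takes the form $\coloredopfont{O} \cp_{\underlying{\coloredopfont{O}}} \underlying{\coloredopfont{C}_{\mathrm{GK}}} \to \coloredopfont{C}_{\mathrm{GK}}$. An element of $\coloredopfont{C}_{\mathrm{GK}}$ at a given profile is an isomorphism class of ordered (non-rooted) trees, whereas an element of the domain has a representative consisting of an ordered rooted tree $T$ together with a cyclic reordering $\tau_v$ at each vertex $v$ of $T$, by Lemma~\ref{lemma: Day power when Y is arity one} combined with Notation~\ref{notation: cp sub Q}. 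Two such representatives are identified by the coequalizer when they differ by moving a linear reordering (that is, an arity-one operation of $\coloredopfont{O}$) at some $\nbhd(v)$ across from the $\underlying{\coloredopfont{O}}$ side to the $\underlying{\coloredopfont{C}_{\mathrm{GK}}}$ side. The extension morphism sends $(T, \{\tau_v\})$ to the ordered non-rooted tree obtained by forgetting the root of $T$, restricting the linear order on each $\nbhd(v)$ to its induced cyclic order, and postcomposing with $\tau_v$.

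For surjectivity, I would produce a preimage of a given ordered tree $G$ by rooting $G$ at its distinguished boundary element and lifting the cyclic order at each $\nbhd(v)$ to an arbitrary linear order; the cyclic reordering $\tau_v$ needed to recover the cyclic order of $G$ is then uniquely determined. For injectivity, any preimage of $G$ must share with $G$ its underlying graph and its orderings on boundary elements and vertices, so the only remaining freedom is the choice of linear refinement of the cyclic order at each $\nbhd(v)$ together with a compensating $\tau_v$. Any two such choices differ by linear reorderings at the vertices, hence define the same class under the coequalizer. This establishes existence and uniqueness of the preimage class.

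The main obstacle is verifying that the coequalizer relation is neither too coarse nor too fine: specifically, that the identifications generated by shifting arity-one operations of $\coloredopfont{O}$ between the two tensor factors absorb precisely the ambiguity in the choice of linear refinement at each vertex, but do not identify pairs whose underlying graphs or their orderings on boundary elements and vertices differ. Once this bookkeeping is carried out, the construction above exhibits a two-sided inverse to the extension morphism on each profile, so the extension morphism is an isomorphism of collections, as required.
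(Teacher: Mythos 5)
Your proposal is correct and takes essentially the same route as the paper: the paper proves Lemma~\ref{lemma O to CGK} by exhibiting, for each ordered tree $G$, the unique preimage under the extension morphism (root $G$ at its first boundary element and rotate each $\nbhd(v)$ so the root becomes minimal), and then the corollary follows from Theorem~\ref{theorem: sufficient}; your version just makes the coequalizer representatives $(T,\{\tau_v\})$ and the $\Sigma_{k_v-1}$-ambiguity explicit. One small wording caveat: elements of $\coloredopfont{C}_{\mathrm{GK}}$ carry \emph{linear} orders on each $\nbhd(v)$ (the cyclicity resides in the arity-one endomorphisms being all of $\Sigma_n$ rather than $\Sigma_{n-1}$), so your $\tau_v$ is uniquely determined because it must recover $G$'s linear order, not merely a cyclic order --- with that reading your existence and uniqueness claims are exactly the paper's.
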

This adjoint was independently constructed by Ward~\cite[\S9.11]{Ward:6OFGO} in the ground category of vector spaces over a field of characteristic zero. See~\cite[Lemma~2.16]{DrummondColeHackney:CFERIQMSAAIIC} for general ground categories. 
A number of variations, such as for non-unital operads and cyclic operads, or for non-symmetric operads and cyclic operads, are also possible. 
These are left to the reader.

\begin{example}[Non-examples]
\label{example non-examples}
Some negative results are included in Table~\ref{table nonexamples}, where the map of operads $\phi:\mathcal{P}\to{\calQ}$ is not a categorical extension and the exceptional right adjoint to $\phi^*$ does not exist.
We do not give full details but in every case it is possible to find a simple example demonstrating that the extension morphism is not an isomorphism. 
As one example for operads and non-symmetric operads, in Figure~\ref{figure: ns operads variant picture} there is no compatible choice of order for the edges adjacent to the bottom vertex which will make this element appear in the image of the extension morphism of $\coloredopfont{O}_{\mathrm{ns}}\to\coloredopfont{O}$.
As another example, no element of $\coloredopfont{M}$ indexed by a graph containing a cycle of length greater than one is in the image of the extension morphism of $\coloredopfont{C} \to \coloredopfont{M}$.

\begin{table}[ht]
\begingroup
\renewcommand*{\arraystretch}{1.5}
\begin{tabular}
{l>{\raggedright}p{3cm}>{\raggedright}p{3cm}>{\raggedright}p{3cm}}
color set 
& $\mathcal{P}$-algebras 
& ${\calQ}$-algebras 
& $\phi^*$
\tabularnewline
\hline
$\mathbb{N}$
&
non-symmetric operads
&
operads
& 
forget symmetric group actions
\tabularnewline
$\mathbb{N}$
&
cyclic operads 
&
modular operads (without genus)
& 
forget contraction operations
\tabularnewline
$\mathbb{N}\times\mathbb{N}$
&
dioperads 
&
properads
& 
forget higher genus composition
\tabularnewline
$\mathbb{N}\times\mathbb{N}$
&
properads
&
wheeled properads
& 
forget wheel contractions
\end{tabular}
\endgroup
\caption{Non-categorical extensions related to generalized operads}\label{table nonexamples}
\end{table}

These non-examples have both unital and non-unital variants and extend to any ground category $\mathcal{E}$ with conservative coproducts as in Remark~\ref{remark ground category} which is not equivalent to the terminal category.
All of the algebras from Table~\ref{table nonexamples} are about monochrome objects, but these work identically when working over a fixed color set (or in the case of the second line, over a fixed involutive color set), as in Example~\ref{example colored variants}.
For instance, a colored version of the first line would indicate that, for a non-empty set $C$, the forgetful functor from $C$-colored operads to non-symmetric $C$-colored operads does not have a right adjoint.

\begin{figure}[ht]
\labellist
\small\hair 2pt
 \pinlabel {$1$} [ ] at 37.5 6
 \pinlabel {$2$} [ ] at 8 61
 \pinlabel {$3$} [ ] at 57 52
 \pinlabel {$4$} [ ] at 38 61
\endlabellist
\centering
\includegraphics[scale=1.0]{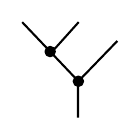}
\caption{No element in the image of $\coloredopfont{O}_{\mathrm{ns}}\to\coloredopfont{O}$ has the indicated boundary ordering.}
\label{figure: ns operads variant picture}
\end{figure}
\end{example}

\subsection{The general case}
\label{section: general case examples}
Now we pass to the general case, where colors are allowed to change.
Let us first consider the cases of the initial and terminal operads.
We already saw in Example~\ref{example phi one isomorphism} that the unique map from a colored operad to the terminal colored operad (the commutative operad) is often not a categorical extension.
\begin{example}[Trivial example]\label{example initial operad trivial}
Suppose that $\mathcal{P}$ is the initial operad, that is, the unique operad with $\colorsa = \emptyset$.
Then for any other operad ${\calQ}$, the unique map $\mathcal{P} \to {\calQ}$ is a categorical extension, as the condition is vacuously satisfied.
The exceptional right adjoint takes the unique object of $\algebras{\mathcal{P}}$ to the terminal object of $\algebras{{\calQ}}$.
\end{example}

Our next example is likely not of independent interest, but addresses a necessary complication of our method of proof (see Remark~\ref{remark on why we can't stick to fixed color case}).

\begin{example}\label{example can't reduce to fixed color}
Let ${\calQ}$ be the $\{1,2\}$-colored operad which is freely generated by a single morphism in ${\calQ}(1,2;1)$.
Let $f: \{1\} \to \{1,2\}$ be the inclusion, and let $\mathcal{P} = \bar{f} \cp {\calQ} \cp f$ be the associated $\{1\}$-colored operad (see Remark~\ref{remark: alt pres col op map}).
That is, $\mathcal{P}$ is the terminal category.
The extension morphism has the form 
\[
	\bar{f} \cp \underlying{{\calQ}} \cong \mathcal{P} \cp_{\underlying{\mathcal{P}}} \bar{f} \cp \underlying{{\calQ}} \to \bar{f} \cp {\calQ}
\]
since $\mathcal{P} = \underlying{\mathcal{P}}$.
As the right-hand side has entries outside of arity one, this map is not an isomorphism.
\end{example}

\begin{remark}\label{remark on why we can't stick to fixed color case}
Every operad map $(f,\phi) : \mathcal{P} \to {\calQ}$ factors as a composite of operad maps
\[
	\mathcal{P} \to \bar{f} \cp {\calQ} \cp f \to {\calQ}
\]
whose first map fixes colors.
If it were the case, for every function $f$ with target $\colorsb$, that $\mathcal{R} = \bar{f} \cp {\calQ} \cp f \to {\calQ}$ happened to be a categorical extension, we would be able to reduce all considerations in the proof of the main theorem to the fixed-color setting.
Alas, Example~\ref{example can't reduce to fixed color} tells us that this is not the case.
Notice, though, that if we apply $(-)\cp f$ to the extension morphism of $\mathcal{R} \to {\calQ}$, we have that
\[
	\mathcal{R} \cp_{\underlying{\mathcal{R}}} \bar{f} \cp \underlying{{\calQ}} \cp f \to \bar{f} \cp {\calQ} \cp f
\]
is always an isomorphism since $\bar{f} \cp \underlying{{\calQ}} \cp f = \underlying{\mathcal{R}}$.
But the functor $(-) \cp f$ is conservative just when $f$ is a surjective function, so we cannot generally deduce in this setting that the extension morphism is an isomorphism.
\end{remark}
Remark~\ref{remark on why we can't stick to fixed color case} yields a reduction to the fixed color case when $f$ is a surjective function. 
There is also a situation which arises in examples in which $f$ is an injective function and $\phi$ is a categorical extension. 
We state it more generally.
\begin{proposition}
Let $(f,\phi)  : \mathcal{P} \to {\calQ}$ be a map of operads such that the comparison map
$\mathcal{P}\cp \bar{f}\to \bar{f}\cp{\calQ}$ is an isomorphism.
Then $\phi$ is a categorical extension.
\end{proposition}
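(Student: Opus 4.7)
The plan is to show that, under the hypothesis, the extension morphism $\Phi:\mathcal{P}\cp_{\underlying{\mathcal{P}}}(\bar{f}\cp\underlying{{\calQ}})\to\bar{f}\cp{\calQ}$ becomes identified with the comparison map $\psi:\mathcal{P}\cp\bar{f}\to\bar{f}\cp{\calQ}$ itself (in the sense of presentation (2) of Remark~\ref{remark: alt pres col op map}), via a natural isomorphism of its domain. Since $\psi$ is an isomorphism by hypothesis, $\Phi$ must then also be an isomorphism.

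First I would restrict $\psi$ to the arity-one parts, obtaining $\underlying{\psi}:\underlying{\mathcal{P}}\cp\bar{f}\to\bar{f}\cp\underlying{{\calQ}}$. Since $\bar{f}$ is already arity-one, this restriction is still an isomorphism. Moreover, restricting the second diagram of Remark~\ref{remark: alt pres col op map}(2) to the arity-one subcollections shows that $\underlying{\psi}$ is in fact the comparison 2-cell for the restricted operad map $\underlying{\phi}:\underlying{\mathcal{P}}\to\underlying{{\calQ}}$, so in particular it is a map of left $\underlying{\mathcal{P}}$-modules. Applying $\mathcal{P}\cp_{\underlying{\mathcal{P}}}(-)$ to $\underlying{\psi}$ and composing with the canonical cancellation $\mathcal{P}\cp_{\underlying{\mathcal{P}}}(\underlying{\mathcal{P}}\cp\bar{f})\cong\mathcal{P}\cp\bar{f}$ provides the required isomorphism $\Theta:\mathcal{P}\cp\bar{f}\xrightarrow{\cong}\mathcal{P}\cp_{\underlying{\mathcal{P}}}(\bar{f}\cp\underlying{{\calQ}})$.

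Finally I would verify that $\Phi\circ\Theta=\psi$. Precomposing with the quotient from $\mathcal{P}\cp\bar{f}\cp\underlying{{\calQ}}$ and invoking the construction of the extension morphism in Lemma~\ref{lemma: extension morphism exists}, this reduces to a short diagram chase showing that the composite
\[
\mathcal{P}\cp\bar{f}\xrightarrow{\mathcal{P}\cp\eta_{\mathcal{P}}\cp\bar{f}}\mathcal{P}\cp\underlying{\mathcal{P}}\cp\bar{f}\xrightarrow{\mathcal{P}\cp\underlying{\psi}}\mathcal{P}\cp\bar{f}\cp\underlying{{\calQ}}\xrightarrow{\psi\cp\underlying{{\calQ}}}\bar{f}\cp{\calQ}\cp\underlying{{\calQ}}\xrightarrow{\bar{f}\cp\mu_{{\calQ}}}\bar{f}\cp{\calQ}
\]
collapses to $\psi$: the first two arrows combine to $\mathcal{P}\cp\bar{f}\cp\eta_{{\calQ}}$ by the unit diagram of Remark~\ref{remark: alt pres col op map}(2), this slides past $\psi\cp\underlying{{\calQ}}$ by naturality, and the remaining composition becomes the identity by right unitality of ${\calQ}$. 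The main obstacle is the bookkeeping involved in showing that $\underlying{\psi}$ is a left $\underlying{\mathcal{P}}$-module map and that the free-module cancellation holds at the level of collections; both follow formally once the associativity diagram of Remark~\ref{remark: alt pres col op map}(2) is at hand, after which the rest of the argument is a routine application of the unit axioms.
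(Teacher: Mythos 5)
Your proof is correct and follows essentially the same route as the paper: restrict the comparison map to arity one, use it together with the cancellation $\mathcal{P}\cp_{\underlying{\mathcal{P}}}(\underlying{\mathcal{P}}\cp\bar{f})\cong\mathcal{P}\cp\bar{f}$ to identify the domain of the extension morphism with $\mathcal{P}\cp\bar{f}$, and then invoke the hypothesis. The only difference is that the paper simply asserts that the resulting chain of isomorphisms ``coincides with the extension morphism,'' whereas you carry out the unit-axiom diagram chase verifying this, which is a welcome addition rather than a divergence.
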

\begin{proof}
The hypothesis implies that $\underlying{\mathcal{P}}\cp \bar{f}\to \bar{f}\cp \underlying{{\calQ}}$ is also an isomorphism.
Consequently
\[
\mathcal{P}\cp_{\underlying{\mathcal{P}}} \bar{f}\cp \underlying{{\calQ}} \cong 
\mathcal{P}\cp_{\underlying{\mathcal{P}}} {\underlying {\mathcal{P}}}\cp \bar{f} \cong
\mathcal{P}\cp \bar{f}\cong
\bar{f}\cp {\calQ}
\]
and this isomorphism coincides with the extension morphism.
\end{proof}

\begin{remark}
\label{remark: injection of color sets and maximal sieve}
When $f:\colorsa\to \colorsb$ is an injection of color sets, the condition that $\mathcal{P}\cp \bar{f}\to \bar{f}\cp {\calQ}$ is an isomorphism reduces to the condition that $\mathcal{P}$ is a \emph{maximal sieve} of ${\calQ}$:
\begin{enumerate}
\item (fully faithful condition) for any profile in $\colorsa$, $\mathcal{P}(\ua;a) \cong {\calQ}(f(\ua);f(a))$ and all structure maps in $\mathcal{P}$ are induced from ${\calQ}$, and
\item (ideal condition) for any profile $(\ub;a)$ in $\listsopb\times \colorsa$, if $\ub$ contains a color not in $\colorsa$ then ${\calQ}(\ub;f(a))$ is an initial object of $\mathcal{E}$.
\end{enumerate}
\end{remark}
This is very close to~\cite[Prop.\ 7.9]{Ward:6OFGO}.
Now we give a few examples of this phenomenon.

\begin{example}
There is a suboperad $\coloredopfont{O}_+$ of $\coloredopfont{O}$ without the color $1$ but with all the same operations for defined profiles. Algebras over $\coloredopfont{O}_+$ correspond to operads with no arity zero operations.

There is in turn a $\{2\}$-colored suboperad $\as$ of $\coloredopfont{O}_{+}$ consisting of those rooted trees where every vertex has valence $2$.
That is, elements of $\as$ are linear rooted trees, including the tree with no vertices. 
Algebras over $\as$ are monoids.
The underlying category of $\as$ has only the identity morphism.
\begin{lemma}
The inclusion of $\as$ into $\coloredopfont{O}_{+}$ is a categorical extension.
\end{lemma}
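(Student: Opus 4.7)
My plan is to apply the immediately preceding proposition via Remark~\ref{remark: injection of color sets and maximal sieve}. Since the inclusion $\iota : \as \hookrightarrow \coloredopfont{O}_+$ is induced by the injection of color sets $f : \{2\} \hookrightarrow \{2, 3, 4, \ldots\}$, the proposition reduces the problem to verifying that $\as$ is a \emph{maximal sieve} of $\coloredopfont{O}_+$ in the sense of Remark~\ref{remark: injection of color sets and maximal sieve}. Concretely, this means checking: (i) for every list $\ua = (2, \ldots, 2)$, the inclusion $\as(\ua; 2) \to \coloredopfont{O}_+(\ua; 2)$ is an isomorphism compatible with all structure maps, and (ii) for every list $\ub$ in $\{2, 3, 4, \ldots\}$ that contains some color $k > 2$, the object $\coloredopfont{O}_+(\ub; 2)$ is initial.

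The crux of the argument is a single structural observation about rooted trees: any rooted tree representing an element of $\coloredopfont{O}_+(\ub; 2)$ has exactly one leaf, and hence must be a linear chain of valence-$2$ vertices. I would justify this by a leaf count: a rooted tree with $k$ vertices of arities $a_1, \dots, a_k$ (here each $a_i \geq 1$ because $\coloredopfont{O}_+$ excludes color $1$) has $\sum_{i=1}^{k} a_i - (k - 1)$ leaves. Setting this equal to $1$ forces $\sum a_i = k$, which, together with $a_i \geq 1$, forces $a_i = 1$ for all $i$. Since unary vertices cannot branch, the resulting tree is necessarily a chain.

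Given this observation, both conditions are immediate. For (ii), any $\ub$ with an entry $k > 2$ would require some $a_i \geq 2$, contradicting $\sum a_i = k$, so no such tree exists and $\coloredopfont{O}_+(\ub; 2)$ is empty. For (i), any tree with $\ua = (\underbrace{2, \ldots, 2}_k)$ and output color $2$ is a linear rooted tree with $k$ unary vertices, which lies in $\as$ by its very definition, and the operadic composition and symmetric group actions are inherited from $\coloredopfont{O}_+$.

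The main obstacle is essentially just the leaf-counting combinatorial fact, which is standard; the rest is a direct application of the machinery set up in the preceding proposition and remark. Once the structural observation is in hand, the verification of the two conditions requires no further calculation.
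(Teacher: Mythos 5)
Your proposal is correct and follows the same route as the paper: invoke the maximal-sieve criterion of Remark~\ref{remark: injection of color sets and maximal sieve} for the injection $\{2\}\hookrightarrow\{2,3,\dots\}$, note full faithfulness holds by the definition of $\as$, and observe that a rooted tree with two-element boundary and all vertices of valence at least two must be a linear chain of valence-two vertices. Your explicit leaf count $\sum a_i-(k-1)$ is just a more detailed justification of the counting fact the paper states in one line (equivalently, $p=2+\sum(k_i-2)$), so there is nothing to add.
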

\begin{proof}
The color map $f$ is the inclusion of $\{2\}$ into $\{2,3,\ldots\}$. 
We use Remark~\ref{remark: injection of color sets and maximal sieve}.
The inclusion of $\as$ into $\coloredopfont{O}_{+}$ satisfies the fully faithful condition by definition.
Any rooted tree with overall valence two and vertices of valence at least two can only have vertices of valence two, so $\as$ satisfies the ideal condition.
\end{proof}

Therefore there is a right adjoint $\phi_*$ to the forgetful functor from operads without $0$-ary operations to monoids. 
By the formula of Construction~\ref{construction:right adjoint}, we have that $\phi_*(A)(n)$ is $A$ if $n$ is $1$ and the terminal object if $n \geq 2$.
This is correct by inspection and is a nice dual result to the shape of the left adjoint, which is the initial object in all arities other than $1$.

Note that the inclusion of $\as$ into the colored operad governing operads with arity zero operations, $\coloredopfont{O}$, is not a categorical extension. 
In this case $\bar{f} \cp \coloredopfont{O}$ contains isomorphism classes of labelled trees with vertices of arbitrary valence and the extension morphism fails to be surjective.
Thus there is no right adjoint to the inclusion of monoids into ordinary operads.
\end{example}

\begin{remark}
The previous example has an extension to the colored setting.
If $\colorsa$ is a set, the operad $\coloredopfont{O}^\colorsa$ for $\colorsa$-colored operads (from Example~\ref{example colored variants}), has analogous suboperads $\as^\colorsa \subseteq \coloredopfont{O}_{+}^\colorsa \subseteq \coloredopfont{O}^\colorsa$.
The inclusion $\as^\colorsa \to \coloredopfont{O}_{+}^\colorsa$ will be a categorical extension, but $\as^\colorsa \to \coloredopfont{O}^\colorsa$ will not be (unless $\colorsa = \varnothing$).
Objects of $\algebras{\as^\colorsa}$ are categories with object set $\colorsa$, and morphisms are identity-on-object functors between such. 
Similarly, $\algebras{\coloredopfont{O}_{+}^\colorsa}$ is the category of $\colorsa$-colored `positive' operads, which play a role in Section~\ref{section: necessary}.
\end{remark}

\begin{example}[$\ell$-truncated operads]
For $1\leq \ell \leq \infty$, let $\coloredopfont{O}_{+}^{\ell}$ denote the full suboperad of $\coloredopfont{O}_{+}$ with color set $\{ 2, 3, \dots, \ell+1 \}$.
Note that $\coloredopfont{O}_{+}^{1} = \as$ and $\coloredopfont{O}_{+}^{\infty} = \coloredopfont{O}_{+}$.
Algebras over $\coloredopfont{O}_{+}^{\ell}$ are `$\ell$-truncated operads.'
If $\ell \leq m$, then the inclusion $\coloredopfont{O}_{+}^{\ell} \to \coloredopfont{O}_{+}^{m}$ is a maximal sieve.
Full faithfulness is automatic, and the ideal condition holds because $\coloredopfont{O}_{+}(k_1, \dots, k_n; p)$ is inhabited if and only if 
\[
	p = 2 + \sum_{i=1}^n (k_i - 2).
\]
By Remark~\ref{remark: injection of color sets and maximal sieve}, $\coloredopfont{O}_{+}^{\ell} \to \coloredopfont{O}_{+}^{m}$ is a categorical extension.
The right adjoint to restriction places the terminal object in arities $\ell < n \leq m$.
When $m=\infty$, this right adjoint $\algebras{\coloredopfont{O}_{+}^{\ell}} \to \algebras{\coloredopfont{O}_{+}}$ appeared as the first part of \cite[Proposition~4.2.2]{GuillenSantosNavarroPascualRoig:MSFO}.
\end{example}

\begin{example}
In Lemma~\ref{lemma O to CGK} we showed that $\coloredopfont{O}\to\coloredopfont{C}_{\mathrm{GK}}$ was a categorical extension, giving a right adjoint to the restriction from Getzler--Kapranov cyclic operads to operads. 
We could just as easily have worked with $\coloredopfont{C}$, the $\mathbb{N}$-colored operad governing all cyclic operads (see Example~\ref{example cyclic operads} for details), as is justified by the following lemma.
\begin{lemma}
The inclusion of operads $\coloredopfont{C}_{\mathrm{GK}}\to\coloredopfont{C}$ is a categorical extension.
\end{lemma}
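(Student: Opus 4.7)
My plan is to apply Remark~\ref{remark: injection of color sets and maximal sieve}. The inclusion $\coloredopfont{C}_{\mathrm{GK}} \to \coloredopfont{C}$ is induced by the inclusion of color sets $f : \{1,2,3,\dots\} \hookrightarrow \mathbb{N}$, so it will suffice to verify that $\coloredopfont{C}_{\mathrm{GK}}$ sits inside $\coloredopfont{C}$ as a maximal sieve.

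First I would dispatch the fully faithful condition, which should be immediate from the construction recalled in Appendix~\ref{appendix: examples of colored operads}: $\coloredopfont{C}_{\mathrm{GK}}$ is just the full suboperad of $\coloredopfont{C}$ obtained by restricting the color set to the positive integers, so on any profile whose entries are already positive integers the two operads and their structure maps coincide by definition.

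The substantive step will be the ideal condition. I plan to show that if $(b_1,\dots,b_n;a)$ is a profile of $\coloredopfont{C}$ with some $b_i = 0$ and $a\in\{1,2,\dots\}$, then $\coloredopfont{C}(b_1,\dots,b_n;a)$ is the initial object. An element at such a profile is (an isomorphism class of) a connected ordered tree whose $i$-th vertex has valence $b_i$ and which has $a$ boundary components. A label of $0$ at vertex $i$ forces that vertex to have no incident edges, hence to be isolated, which is incompatible with a connected tree that contains either another vertex or any boundary component. Since $a \geq 1$ produces at least one boundary component, no such tree exists.

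The main obstacle is just confirming the convention of Appendix~\ref{appendix: examples of colored operads} that the color $0$ in $\coloredopfont{C}$ really does correspond to a valence-$0$ (isolated) vertex; once that is pinned down, the combinatorial observation above finishes the verification and Remark~\ref{remark: injection of color sets and maximal sieve} delivers the conclusion.
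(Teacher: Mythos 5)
Your proposal is correct and follows the same route as the paper: both invoke the maximal sieve criterion of Remark~\ref{remark: injection of color sets and maximal sieve}, with full faithfulness immediate from the definitions and the ideal condition coming down to the observation that a connected simply connected graph with non-empty boundary cannot have a vertex with empty $\nbhd(v)$. Your worry about the convention is unfounded --- in Appendix~\ref{appendix: examples of colored operads} the color of a vertex is indeed the cardinality of its neighborhood, so color $0$ does mean an isolated vertex and your combinatorial argument goes through.
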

\begin{proof}
Again we follow Remark~\ref{remark: injection of color sets and maximal sieve}. 
Both full faithfulness and the ideal condition follow from the fact that given a simply connected graph with non-empty boundary, each vertex $v$ of the graph must have non-empty $\nbhd(v)$.
\end{proof}
Therefore the forgetful functor ``forget constants and pairings of elements to constants'' from cyclic operads to Getzler--Kapranov cyclic operads has a right adjoint which puts in a terminal object for the constants and the unique pairing of elements to the terminal object.
In the colored setting, a version of this exceptional right adjoint appears, more or less, as the subcategory inclusion in \cite[Lemma~4.2]{DrummondColeHackney:DKHCO}.
\end{example}
\begin{example}
\label{example: cyclic to NN modular}
We return to the case of cyclic and modular operads.
In Example~\ref{example non-examples}, we noted that $\coloredopfont{C} \to \coloredopfont{M}$ is not a categorical extension.
This time we will use the genus-aware model $\coloredopfont{M}^{\mathrm{g}}$ for modular operads from Example~\ref{example modular genus}. 
There is a map $(f,\phi)$ of colored operads from $\coloredopfont{C}\to \coloredopfont{M}^{\mathrm{g}}$ where $f$ takes the color $n$ to the color $(n,0)$ and $\phi$ takes an ordered tree to itself.
Pullback along this map of colored operads is a forgetful functor from genus-aware modular operads to cyclic operads which forgets all operations and gluings of higher genus.
\begin{lemma}
The map $(f,\phi)$ is a categorical extension.
\end{lemma}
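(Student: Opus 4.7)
The plan is to invoke the Proposition immediately preceding Remark~\ref{remark: injection of color sets and maximal sieve}, which reduces the categorical extension condition to showing that the comparison map $\coloredopfont{C} \cp \bar{f} \to \bar{f} \cp \coloredopfont{M}^{\mathrm{g}}$ is an isomorphism. Since $f\colon \mathbb{N}\to\mathbb{N}\times\mathbb{N}$, $n\mapsto(n,0)$, is injective, Remark~\ref{remark: injection of color sets and maximal sieve} further reduces this to verifying that $\coloredopfont{C}$ is a maximal sieve of $\coloredopfont{M}^{\mathrm{g}}$; that is, to checking the fully faithful condition and the ideal condition.

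The essential ingredient for both conditions is the genus computation. An operation in $\coloredopfont{M}^{\mathrm{g}}((b_1, \ldots, b_m); (a, 0))$, with $b_i = (n_i, g_i)$, is represented by a connected graph $G$ whose total genus equals $\sum_i g_i + b_1(G) = 0$. Since both summands are non-negative, this forces each vertex genus $g_i$ to vanish and $G$ to be a tree.

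For the ideal condition, the genus computation immediately implies that if any $b_i$ has $g_i > 0$ (so that $b_i$ is not in the image of $f$), then $\coloredopfont{M}^{\mathrm{g}}(\ub; (a,0))$ is the initial object of $\mathcal{E}$. For the fully faithful condition, when all $b_i = (n_i, 0) = f(n_i)$, the operations in $\coloredopfont{M}^{\mathrm{g}}((f(n_1),\ldots,f(n_m)); f(a))$ reduce to precisely the ordered trees with internal vertices of valences $n_i$ and ordered boundary of size $a$---the very data indexing operations in $\coloredopfont{C}((n_1,\ldots,n_m); a)$. The operad map $\phi$ sends each ordered tree to itself, so it realizes this bijection, and compatibility with composition (grafting) and units is automatic from the construction of $\phi$.

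I do not anticipate a significant obstacle beyond unpacking the definition of genus for $\coloredopfont{M}^{\mathrm{g}}$ from Appendix~\ref{appendix: examples of colored operads}; the argument runs parallel to Lemma~\ref{lemma O to CGK} and the variations in the preceding examples, with the only new observation being that the genus-zero output constraint simultaneously rules out both loops in $G$ and higher-genus labels on its vertices.
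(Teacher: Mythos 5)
Your proposal is correct and follows essentially the same route as the paper: both reduce via the maximal-sieve criterion of Remark~\ref{remark: injection of color sets and maximal sieve} (the color map $n\mapsto(n,0)$ being injective) and then verify the fully faithful and ideal conditions by the observation that a genus-zero output forces $b_1(G)+\sum_j g_j=0$, hence a tree with all vertex genera zero, which is exactly the image of $\coloredopfont{C}$. The only difference is that you spell out the non-negativity argument that the paper leaves implicit.
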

\begin{proof}
In $\coloredopfont{M}^{\mathrm{g}}$, an operation has genus zero inputs and output if and only if the corresponding graph is a tree and the genus of each vertex is zero; these operations are precisely the image of the inclusion of $\coloredopfont{C}$, giving the full faithfulness criterion of Remark~\ref{remark: injection of color sets and maximal sieve}. 
On the other hand, any operation with genus zero output necessarily only has genus zero inputs, giving the ideal criterion.
\end{proof}
We conclude that the ``forget higher genus'' forgetful functor has a right adjoint. Similarly to the previous examples, the adjoint is given by
\[
\phi^*(C)(n,g)=
\begin{cases}
C(n)& g=0
\\{*}&\text{otherwise.}
\end{cases}
 \] 
This adjoint was described by Ward~\cite[\S 9.1]{Ward:6OFGO}.
\end{example}

In Remark~\ref{remark: alt pres col op map} and Remark~\ref{remark on why we can't stick to fixed color case} we encountered a standard construction that allows one to pull back a $\colorsb$-colored operad ${\calQ}$ to an $\colorsa$-colored operad along a function $\colorsa \to \colorsb$.
This can be reinterpreted in terms of the operads appearing in Example~\ref{example colored variants}.

\begin{example}
Let $h : \colorsa \to \colorsb$ be a function.
There is a map of colored operads $(f,\phi) : \coloredopfont{O}^\colorsa \to \coloredopfont{O}^\colorsb$ where $f = \coprod_{n \geq 1} h^{\times n}$ and $\phi$ takes a rooted tree with coloring function $E\to \colorsa$ to the same rooted tree but with coloring function $E\to \colorsa \to \colorsb$.
If $\colorsa$ is empty, then $\coloredopfont{O}^\colorsa$ is the initial operad so Example~\ref{example initial operad trivial} applies to show that $(f,\phi)$ is a categorical extension.
With the assumption that $\colorsa$ is nonempty,
we claim that $(f,\phi)$ is a categorical extension if and only if $h$ is a bijection.
One can see the nontrivial direction by examining the extension morphism in profiles of the form $( (b_1,b_2), (b_2); (a_1))$ where $b_1 = h(a_1)$.
There is a unique rooted ordered tree 
\textbullet\!---\!\textbullet\!--- 
containing two vertices of the appropriate valences.
The target of the extension morphism at this profile is $\coloredopfont{O}^\colorsb( (b_1,b_2), (b_2); (b_1))$, which has exactly one element.
On the other hand, the preimage of this element under the extension morphism may be identified with $h^{-1}(b_2)$, the coloring of the internal edge.
Assuming the extension morphism is an isomorphism, we then have that $h^{-1}(b_2)$ is a single point for every $b_2 \in \colorsb$, hence $h$ is a bijection.
\end{example}

\section{Necessity of the criterion}
\label{section: necessary}
In this section we will prove Theorem~\ref{theorem: necessary}. 
Let $(f,\phi):\mathcal{P}\to{\calQ}$ be a map of colored operads. 
The theorem states that if the restriction functor $\phi^*$ from ${\calQ}$-algebras to $\mathcal{P}$-algebras is a left adjoint, then $(f,\phi)$ is a categorical extension. 
Our strategy is to show that preservation of certain colimits implies the factorization condition.
We begin with initial objects.

\begin{notation}
Let $(\colorsa,\mathcal{P})$ be an operad. 
We write $\initial{\mathcal{P}}$ for the initial $\mathcal{P}$-algebra.
\end{notation}
The initial $\mathcal{P}$-algebra $\initial{\mathcal{P}}$ has $\initial{\mathcal{P}}_a = \mathcal{P}(\,\,; a)$.
This implies the following lemma, whose conclusion is the arity zero part of the condition for a map of operads to be a categorical extension.

\begin{lemma}
Suppose that $(f,\phi):(\colorsa,\mathcal{P})\to(\colorsb,{\calQ})$ is a map of colored operads and that $\phi^*$ preserves initial objects. 
Then for every color $a$ of $\colorsa$, the extension morphism 
\[
\mathcal{P} \cp_{\underlying{\mathcal{P}}} (\bar{f}\cp \underlying{\calQ}) \to \bar{f}\cp \calQ
\]
is an isomorphism in the profile $(\,\,;a)$.
\end{lemma}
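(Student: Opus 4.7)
The plan is to evaluate the extension morphism in profile $(\,\,;a)$ by hand, recognize it as the arity-zero component $\phi(\,\,;a) : \mathcal{P}(\,\,;a) \to {\calQ}(\,\,;f(a))$, and then use the assumption that $\phi^*$ preserves the initial algebra to force this component to be an isomorphism.

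First I would compute the codomain. Because $\bar{f}$ is concentrated in arity one, the arity-one formula following Lemma~\ref{lemma: Day power when Y is arity one} gives $(\bar{f}\cp{\calQ})(\,\,;a) \cong \coprod_{b\in\colorsb} \bar{f}(b;a)\otimes{\calQ}(\,\,;b) \cong {\calQ}(\,\,;f(a))$.

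Next I would compute the domain. Since $\bar{f}\cp\underlying{{\calQ}}$ is a composition of two arity-one-concentrated collections it is itself concentrated in arity one, so Lemma~\ref{lemma: Day power when Y is arity one} applies: the Day power $(\bar{f}\cp\underlying{{\calQ}})^{\ub}(\,\,)$ vanishes unless $\ub$ is the empty list, in which case it equals the empty tensor product $\mathbf{1}$. Feeding this into the composition product yields $(\mathcal{P}\cp(\bar{f}\cp\underlying{{\calQ}}))(\,\,;a) \cong \mathcal{P}(\,\,;a)$, and the same computation shows that $(\mathcal{P}\cp\underlying{\mathcal{P}}\cp(\bar{f}\cp\underlying{{\calQ}}))(\,\,;a) \cong \mathcal{P}(\,\,;a)$ and that both parallel arrows of the reflexive coequalizer from Notation~\ref{notation: cp sub Q} restrict to the identity on $\mathcal{P}(\,\,;a)$. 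Hence the coequalizer is trivial in this profile, i.e.\ $(\mathcal{P}\cp_{\underlying{\mathcal{P}}}(\bar{f}\cp\underlying{{\calQ}}))(\,\,;a) \cong \mathcal{P}(\,\,;a)$. Chasing the construction of the extension morphism (Lemma~\ref{lemma: extension morphism exists}) through these identifications, it is straightforward to verify that the extension morphism at $(\,\,;a)$ is precisely the component $\phi(\,\,;a) : \mathcal{P}(\,\,;a) \to {\calQ}(\,\,;f(a))$.

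Finally, I would bring in the hypothesis. The initial $\mathcal{P}$-algebra $\initial{\mathcal{P}}$ has underlying $\colorsa$-object $\mathcal{P}(\,\,;-)$, and applying the arity-zero coproduct computation above to $\initial{{\calQ}}$ in place of ${\calQ}$ gives that the underlying $\colorsa$-object of $\phi^*(\initial{{\calQ}}) = \bar{f}\cp\initial{{\calQ}}$ is $a\mapsto {\calQ}(\,\,;f(a))$. The unique $\mathcal{P}$-algebra morphism $\initial{\mathcal{P}}\to\phi^*(\initial{{\calQ}})$ agrees componentwise with $\phi(\,\,;a)$. Since $\phi^*$ preserves the initial object by hypothesis, $\phi^*(\initial{{\calQ}})$ is itself initial in $\algebras{\mathcal{P}}$, so this unique morphism is an isomorphism, and therefore $\phi(\,\,;a)$ is an isomorphism for every $a$, which is what was to be shown.

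The main obstacle is the bookkeeping in the middle paragraph: confirming that the reflexive coequalizer that defines the relative composition product genuinely collapses in the profile $(\,\,;a)$ and that the resulting map is literally $\phi(\,\,;a)$ rather than something that merely agrees with it up to further identification. Everything else is a direct application of the closed/colimit-preserving properties of a left adjoint.
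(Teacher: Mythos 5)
Your proposal is correct and follows essentially the same route as the paper: identify the source and target of the extension morphism at profile $(\,\,;a)$ as $\mathcal{P}(\,\,;a)$ and $\calQ(\,\,;f(a))$ respectively, recognize the map as the arity-zero component coming from the unique $\mathcal{P}$-algebra map $\initial{\mathcal{P}}\to\phi^*\initialQ$, and invoke preservation of initial objects. The paper's proof is simply terser, taking the collapse of the coequalizer in arity zero for granted where you verify it explicitly.
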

\begin{proof}
At profile $(\,\,;a)$, the source of the extension morphism is $\mathcal{P}(\,\,;a)$ while the target is $\calQ(\,\,;f(a))$.
By assumption, $\phi^*\initial{\mathcal{P}} \cong \initialQ$, so we have 
\[
	\mathcal{P}(\,\,;a) = \initial{\mathcal{P}}_a = (\phi^*\initial{\mathcal{P}})_{f(a)} \cong \initialQ_{f(a)} = \calQ(\,\,;f(a)). \qedhere
\]
\end{proof}

Now we will argue that preservation of binary coproducts suffices to show that all of the other components of the extension morphism are isomorphisms.
It is more convenient to assume that we are dealing with \emph{positive} operads. 

\begin{definition}
Let $X$ be an $(\colorsa,\colorsb)$-collection.
We call $X$ \emph{positive} if $X(\,\,; b)$ is the initial object of $\mathcal{E}$ for every color $b$ in $\colorsb$.
Likewise, an operad $(\colorsa, \mathcal{P})$ is positive just when its underlying collection is positive, that is, just when the initial $\mathcal{P}$-algebra coincides with the initial $\colorsa$-object.
\end{definition}

\begin{notation}
Let $\mathcal{P}$ be a colored operad. Write $\mathcal{P}_+$ for the positive operad obtained from $\mathcal{P}$ by replacing the components in profiles with empty input list with the initial object of $\mathcal{E}$.
This procedure is functorial, and if $(f,\phi)$ is a map of colored operads from $\mathcal{P}$ to ${\calQ}$, then we write $(f,\phi_+)$ for the corresponding map of colored operads from $\mathcal{P}_+$ to ${\calQ}_+$.
Further, the natural transformation $\iota$ from $(-)_+$ to the identity functor gives, for each operad $(\colorsa, \mathcal{P})$, a map
\[(\id_\colorsa,\iota_{\mathcal{P}}):\mathcal{P}_+\to \mathcal{P}.\]
\end{notation}
\begin{remark}
\label{remark: P and P+ have similar extension morphisms}
It is reasonable to try to reduce from $\mathcal{P}$ to $\mathcal{P}_+$ because the components of their extension morphisms agree on any profile with non-empty input colors.
That is, the following diagram, with vertical maps extension morphisms and horizontal maps induced by $\iota$, commutes more or less by naturality. 
Moreover, the horizontal maps are isomorphisms in any profile with non-empty input colors:
\[
\begin{tikzcd}
\mathcal{P}_+ \cp_{\underlying{\mathcal{P}_+}} (\bar{f} \cp \underlying{{\calQ}_+})
\dar
\rar
&
\mathcal{P} \cp_{\underlying{\mathcal{P}}} (\bar{f} \cp \underlying{{\calQ}})
\dar
\\
\bar{f}\cp{\calQ}_+\rar 
&
\bar{f}\cp{\calQ}.
\end{tikzcd}
\]
\end{remark}
Coproducts of $\mathcal{P}$-algebras and coproducts of $\mathcal{P}_+$-algebras differ. However, we have the following implication.
\begin{lemma}
\label{lemma: restriction to no arity zero}
Suppose that coproducts are conservative in $\mathcal{E}$ as in Remark~\ref{remark ground category}.
Let $(f,\phi):(\colorsa,\mathcal{P})\to(\colorsb,{\calQ})$ be a map of colored operads. 
If $\phi^*$ preserves initial objects and binary coproducts, then $\phi_+^*$ preserves binary coproducts.
\end{lemma}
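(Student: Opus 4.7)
The plan is to use the free extensions $(\iota_\mathcal{P})_!$ and $(\iota_{\calQ})_!$, left adjoints to the restrictions $\iota_\mathcal{P}^*$ and $\iota_{\calQ}^*$, as bridges that upgrade the hypothesis on $\phi^*$ to a statement about $\phi_+^*$. First, for any $\mathcal{P}_+$-algebra $C$, I claim the underlying $\colorsa$-object of $(\iota_\mathcal{P})_! C$ is $C \amalg \initial{\mathcal{P}}$ (coproduct in $\mathcal{E}^{\colorsa}$), and analogously for $(\iota_{\calQ})_!$. This splits from the decomposition $\mathcal{P} \cp X \cong (\mathcal{P}_+ \cp X) \amalg \initial{\mathcal{P}}$ for any $\colorsa$-object $X$, isolating the empty-arity summand of the coend; the reflexive coequalizer computing $(\iota_\mathcal{P})_! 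C$ is preserved by the (monadic) forgetful functor to $\colorsa$-objects, and the two parallel maps agree on the $\initial{\mathcal{P}}$-summand so this summand descends. Second, I claim a Beck--Chevalley isomorphism $(\iota_\mathcal{P})_! \phi_+^* D \cong \phi^*(\iota_{\calQ})_! D$ for any ${\calQ}_+$-algebra $D$, where the comparison is the mate of the unit-induced map $\phi_+^* D \to \iota_\mathcal{P}^* \phi^*(\iota_{\calQ})_! D$. Since the forgetful functor to $\colorsa$-objects is conservative, this can be verified there: both sides evaluate to $\bar{f} \cp D \amalg \initial{\mathcal{P}}$ (by the first claim, and on the $\phi^*(\iota_{\calQ})_! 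D$ side using that $\bar{f} \cp (-)$ preserves coproducts per Lemma~\ref{lemma color map adjunctions} together with $\bar{f} \cp \initial{\calQ} = \phi^*\initialQ \cong \initial{\mathcal{P}}$ from the preservation-of-initial-objects hypothesis), and unwinding the unit and counit shows the comparison is the identity on each summand.

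Now let $\gamma\colon \phi_+^*(\mathcal{A} \amalg_{{\calQ}_+} \mathcal{B}) \to \phi_+^* \mathcal{A} \amalg_{\mathcal{P}_+} \phi_+^* \mathcal{B}$ be the comparison map for ${\calQ}_+$-algebras $\mathcal{A}, \mathcal{B}$; we want $\gamma$ to be an isomorphism. Since $(\iota_{\calQ})_!$ preserves binary coproducts, $(\iota_{\calQ})_!(\mathcal{A} \amalg_{{\calQ}_+} \mathcal{B}) \cong (\iota_{\calQ})_! \mathcal{A} \amalg_{\calQ} (\iota_{\calQ})_! \mathcal{B}$; applying $\phi^*$ and invoking the binary-coproduct hypothesis yields
\[
\phi^*(\iota_{\calQ})_!(\mathcal{A} \amalg_{{\calQ}_+} \mathcal{B}) \cong \phi^*(\iota_{\calQ})_! \mathcal{A} \amalg_{\mathcal{P}} \phi^*(\iota_{\calQ})_! \mathcal{B}.
\]
Rewriting each side through Beck--Chevalley and using that $(\iota_\mathcal{P})_!$ preserves binary coproducts transforms this to an isomorphism $(\iota_\mathcal{P})_! \phi_+^*(\mathcal{A} \amalg_{{\calQ}_+} \mathcal{B}) \cong (\iota_\mathcal{P})_!(\phi_+^* \mathcal{A} \amalg_{\mathcal{P}_+} \phi_+^* \mathcal{B})$, which by naturality is precisely $(\iota_\mathcal{P})_!\gamma$.

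Finally, under the identification from the first claim, $(\iota_\mathcal{P})_!\gamma$ agrees on $\colorsa$-objects with $\gamma \amalg \id_{\initial{\mathcal{P}}}$. Since $(\iota_\mathcal{P})_!\gamma$ is an isomorphism and binary coproducts in $\mathcal{E}$ are conservative (applied componentwise over $\colorsa$), $\gamma$ must be an isomorphism as well. The main obstacle will be the Beck--Chevalley check: while the $\colorsa$-object computation is elementary, pinning down the canonical comparison map as the summand-wise identity requires a slightly fussy chase through the units and counits of the two adjunctions $(\iota_\mathcal{P})_! \dashv \iota_\mathcal{P}^*$ and $(\iota_{\calQ})_! \dashv \iota_{\calQ}^*$.
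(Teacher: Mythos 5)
Your proposal is correct and follows essentially the same route as the paper: the paper isolates your ``Beck--Chevalley'' step as a separate lemma (Lemma~\ref{lemma: restriction to positive is well-behaved}), proved by exactly the unit--counit composite you describe and verified on underlying collections via the splitting $U(\iota_{\mathcal{P}})_!\cong U\initial{\mathcal{P}}\amalg U(-)$, and then concludes with the same transfer-through-$(\iota_{\mathcal{P}})_!$ and conservativity-of-coproducts argument. (One cosmetic quibble: the canonical comparison map $\gamma$ points from the coproduct of restrictions to the restriction of the coproduct, opposite to how you wrote it.)
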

We will prove this lemma in Section~\ref{subsec: discarding arity zero}.

Now Theorem~\ref{theorem: necessary} is implied by the following lemma and Remark~\ref{remark: P and P+ have similar extension morphisms}.
\begin{lemma}
\label{lemma: reduced preserves finite coproducts}
Suppose that coproducts are conservative in $\mathcal{E}$.
Let $(f,\phi):(\colorsa,\mathcal{P})\to(\colorsb,{\calQ})$ be a map of positive colored operads.
If $\phi^*$ preserves binary coproducts, then for every positive length list $\ub=(b_1,\ldots, b_n)$ of $\colorsb$ and color $a$ of $\colorsa$, the component of the extension morphism 
\[
\mathcal{P}\cp_{\underlying{\mathcal{P}}}(\bar{f} \cp \underlying{{\calQ}})
(b_1,\ldots, b_n;a)
\to \bar{f} \cp {\calQ}
(b_1,\ldots, b_n;a)
\]
is an isomorphism.
\end{lemma}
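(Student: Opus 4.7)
The plan is to construct, for a given profile $(b_1, \dots, b_n; a)$ with $n \geq 1$, a family of test $\calQ$-algebras whose coproduct $\phi^*$ preserves (by hypothesis), thereby exhibiting the extension morphism at $(\ub; a)$ as one summand of a naturally-arising isomorphism.

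For each $i = 1, \dots, n$, let $Y_i$ denote the $\colorsb$-object taking value $\mathbf{1}$ at color $b_i$ and the initial object $\initialE$ elsewhere; treat the labels $1, \dots, n$ as distinct even if some $b_i$ coincide, and set $Y = Y_1 \amalg \cdots \amalg Y_n$. The free $\calQ$-algebra functor $T \colon \mathcal{E}^{\colorsb} \to \algebras{\calQ}$ preserves coproducts as a left adjoint, so $T(Y) \cong T(Y_1) \amalg_{\calQ} \cdots \amalg_{\calQ} T(Y_n)$ in $\algebras{\calQ}$. Applying $\phi^*$, which by assumption preserves binary coproducts and (by positivity) preserves the initial object, produces an isomorphism
\[
\phi^* T(Y_1) \amalg_{\mathcal{P}} \cdots \amalg_{\mathcal{P}} \phi^* T(Y_n) \xrightarrow{\cong} \phi^* T(Y)
\]
in $\algebras{\mathcal{P}}$, which I then evaluate at color $a$.

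The core step is to isolate on each side the \emph{multi-linear summand}, meaning the part involving each $Y_i$ exactly once. The coend defining $T(Y)(f(a))$ decomposes as a coproduct over multi-degree in $\mathbb{N}^n$; positivity and the support of each $Y_i$ force the multi-linear piece to equal $\calQ(\ub; f(a)) = (\bar f \cp \calQ)(\ub; a)$. On the left, a multi-linear element is represented by an $n$-ary $\mathcal{P}$-operation applied to one degree-one element from each $\phi^* T(Y_i)$; the degree-one part of $\phi^* T(Y_i)$ at color $a'$ is $\underlying{\calQ}(b_i; f(a'))$, and unwinding the coequalizer presentation of the $\mathcal{P}$-algebra coproduct, together with the fact that $\underlying{\mathcal{P}}$ acts on each $\phi^* T(Y_i)$ via $\phi$, identifies the multi-linear summand with $(\mathcal{P} \cp_{\underlying{\mathcal{P}}} \bar f \cp \underlying{\calQ})(\ub; a)$. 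Direct inspection shows that the restriction of the displayed isomorphism to these multi-linear summands is precisely the extension morphism at $(\ub; a)$.

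The displayed isomorphism is diagonal with respect to the $\mathbb{N}^n$-grading by multi-degree in the $Y_i$'s, and this grading is compatible with the coproduct decomposition on each side by naturality. Conservativity of coproducts in $\mathcal{E}$ then implies that the restriction to the multi-linear $(1, \dots, 1)$-graded summand is itself an isomorphism, which is exactly what we want. The main obstacles will be verifying that multi-degree gives a genuine coproduct decomposition on each side (requiring a careful unpacking of the Day-power formulas from Lemma~\ref{lemma: Day power when Y is arity one} and the coequalizer presentation of $\cp_{\underlying{\mathcal{P}}}$), and confirming that the induced map between the two multi-linear summands coincides with the extension morphism of Lemma~\ref{lemma: extension morphism exists}.
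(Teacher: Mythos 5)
Your proposal follows essentially the same route as the paper's proof: test against the coproduct of free $\calQ$-algebras $\freeq(b_i)$ on single colors, present both sides via the (Rezk) reflexive-coequalizer formula, isolate the multilinear summand of the comparison isomorphism, identify it with the extension morphism at $\vprof{\ub}{a}$, and conclude by conservativity of coproducts. The only cosmetic difference is that you grade by multi-degree in $\mathbb{N}^n$ where the paper uses the coarser binary multilinear/nonlinear splitting, which changes nothing in the argument.
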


The proof of this lemma is the topic of Section~\ref{subsec: preservation of coproducts}.
This proof rests on an analysis of decompositions of collections of the form $X\cp (\coprod Y \cp Z_i)$, which we introduce in Section~\ref{subsec: multilinear}.

\subsection{Discarding arity zero}
\label{subsec: discarding arity zero}
In this section our goal is to prove Lemma~\ref{lemma: restriction to no arity zero}. 

First we observe that restriction along the canonical maps $\iota_{\mathcal{P}}$ and $\iota_{{\calQ}}$ is well-behaved with respect to $\phi$.
\begin{lemma}
\label{lemma: restriction to positive is well-behaved}
Suppose $\mathcal{P}\xrightarrow{(f,\phi)}{\calQ}$ is a map of operads such that $\phi^*$ preserves initial objects.
Then restriction along $\phi$ and $\phi_+$ commutes with induction along $\iota$ in the sense that the following diagram commutes up to natural isomorphism:
\[
\begin{tikzcd}
\algebras{{\calQ}_+}
\rar{(\iota_{{\calQ}})_!}
\dar[swap]{\phi_+^*}
\ar[dr,phantom,"{\rotatebox{45}{$\Rightarrow$}}",description]
&
\algebras{{\calQ}}
\dar{\phi^*}
\\
\algebras{\mathcal{P}_+}
\rar[swap]{(\iota_{\mathcal{P}})_!}
&
\algebras{\mathcal{P}}
\end{tikzcd}
\]
\end{lemma}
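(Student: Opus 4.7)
The plan is to construct the required natural isomorphism at the level of bimodules and then transport it to algebras by tensoring, with the hypothesis that $\phi^*$ preserves initial objects entering exactly once, to match arity-zero parts.

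First I would observe that the ``restriction'' square
\[
\begin{tikzcd}
\algebras{{\calQ}} \rar{\iota_{{\calQ}}^*} \dar[swap]{\phi^*} & \algebras{{\calQ}_+} \dar{\phi_+^*} \\
\algebras{\mathcal{P}} \rar{\iota_{\mathcal{P}}^*} & \algebras{\mathcal{P}_+}
\end{tikzcd}
\]
commutes strictly: both composites send a ${\calQ}$-algebra $\mathcal{B}$ to the collection $\bar{f} \cp \mathcal{B}$ with the $\mathcal{P}_+$-action obtained by restricting its ${\calQ}$-action along the common composite $\mathcal{P}_+ \to \mathcal{P} \to {\calQ} = \mathcal{P}_+ \to {\calQ}_+ \to {\calQ}$. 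Via the mate correspondence for the two adjunctions $(\iota_{?})_! \dashv \iota_{?}^*$, this yields a canonical natural transformation $\gamma : (\iota_\mathcal{P})_! \phi_+^* \Rightarrow \phi^*(\iota_{{\calQ}})_!$, and the remaining task is to prove $\gamma$ is invertible.

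Unfolding both functors via Remark~\ref{remark: usual adjunction between algebras} and Proposition~\ref{proposition associativity}, I would rewrite
\[
(\iota_\mathcal{P})_! \phi_+^* \mathcal{B} \cong \bigl( \mathcal{P} \cp_{\mathcal{P}_+} (\bar{f} \cp {\calQ}_+) \bigr) \cp_{{\calQ}_+} \mathcal{B}, \qquad \phi^*(\iota_{{\calQ}})_! \mathcal{B} \cong (\bar{f} \cp {\calQ}) \cp_{{\calQ}_+} \mathcal{B},
\]
so that $\gamma_{\mathcal{B}}$ is induced, by tensoring with $\mathcal{B}$ over ${\calQ}_+$, from the natural $\mathcal{P}\text{-}{\calQ}_+$ bimodule map
\[
\Gamma : \mathcal{P} \cp_{\mathcal{P}_+} (\bar{f} \cp {\calQ}_+) \longrightarrow \bar{f} \cp {\calQ}
\]
built from the inclusion $\bar{f} \cp {\calQ}_+ \hookrightarrow \bar{f} \cp {\calQ}$ and the left $\mathcal{P}$-action on $\bar{f} \cp {\calQ}$ of Lemma~\ref{lemma: left module structure}. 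It thus suffices to show $\Gamma$ is an isomorphism of underlying collections, which I check pointwise on profiles $(\ub; a) \in \listsopb \times \colorsa$.

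When $\ub$ is nonempty, positivity of $\bar{f} \cp {\calQ}_+$ ensures that only the positive-arity part of $\mathcal{P}$ contributes to the coend $(\mathcal{P} \cp (\bar{f} \cp {\calQ}_+))(\ub; a)$, so the coequalizer defining $\cp_{\mathcal{P}_+}$ collapses to the standard identity $\mathcal{P}_+ \cp_{\mathcal{P}_+} N \cong N$ and yields ${\calQ}(\ub; f(a)) = (\bar{f}\cp{\calQ})(\ub;a)$. When $\ub$ is empty, the same positivity argument reduces the left side to $\mathcal{P}(\,\,;a)$ while the right side is ${\calQ}(\,\,;f(a))$, and the hypothesis supplies exactly the required isomorphism $\mathcal{P}(\,\,;a) = \initial{\mathcal{P}}_a \cong (\phi^* \initialQ)_a = {\calQ}(\,\,;f(a))$. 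The main obstacle is then the bookkeeping to confirm that $\Gamma$ is $\mathcal{P}$-linear on the left (which uses the fact that the $\mathcal{P}$-action on $\bar{f}\cp {\calQ}$ extends its restricted $\mathcal{P}_+$-action), and that the resulting $\gamma$ really agrees with the abstract mate; both reduce to diagram chases from the unit-counit formulas of the relevant monad maps.
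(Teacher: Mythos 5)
Your proposal is correct and follows essentially the same route as the paper: both define the comparison $(\iota_{\mathcal{P}})_!\phi_+^* \Rightarrow \phi^*(\iota_{{\calQ}})_!$ as the mate of the strictly commuting square of restriction functors, and both verify it is invertible by isolating the arity-zero part, where the hypothesis that $\initial{\mathcal{P}} \to \phi^*\initialQ$ is an isomorphism enters. The only difference is packaging: the paper computes directly on underlying $\colorsa$-objects using $U(\iota_{\mathcal{P}})_! \cong U\initial{\mathcal{P}} \amalg U(-)$, so the whole transformation becomes a coproduct of identities with the single summand $U\initial{\mathcal{P}} \to \bar{f}\cp U\initialQ$, whereas you check the equivalent statement one level up, profile-by-profile on the bimodule $\mathcal{P}\cp_{\mathcal{P}_+}(\bar{f}\cp{\calQ}_+) \to \bar{f}\cp{\calQ}$ before tensoring with $\mathcal{B}$.
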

\begin{proof}
There is a natural transformation
\begin{equation}\label{eq composite natural transformation}
(\iota_{\mathcal{P}})_!\phi_+^*
\xrightarrow{\text{unit}}
(\iota_{\mathcal{P}})_!\phi_+^* \iota_{{\calQ}}^*(\iota_{{\calQ}})_!
\cong
(\iota_{\mathcal{P}})_! \iota_{\mathcal{P}}^*\phi^*(\iota_{{\calQ}})_!
\xrightarrow{\text{counit}}
\phi^*(\iota_{{\calQ}})_!
\end{equation}
where the isomorphism in the middle follows from $\iota_{{\calQ}}\phi_+=\phi\iota_{\mathcal{P}}$. 
We will argue that this composite natural transformation is an isomorphism, which can be done at the level of underlying collections.
For the moment, we write $U$ for any of the functors which take an algebra to its underlying collection.
At this level, the functors involved have the following form:
\begin{align*}
U\phi_+^* &\cong \bar{f}\cp U(-) & U\phi^* &\cong \bar{f}\cp U(-)
\\
U\iota_{\mathcal{P}}^*&\cong U & U\iota_{{\calQ}}^*&\cong U 
\\
U(\iota_{\mathcal{P}})_!&\cong U\initial{\mathcal{P}}\amalg U(-)
&
U(\iota_{{\calQ}})_!&\cong U\initialQ\amalg U(-).
\end{align*}
Then the morphism underlying the unit natural transformation of ${\calQ}_+$-algebras, $\id\to \iota_{{\calQ}}^*(\iota_{{\calQ}})_!$, of the form
\[
U(-)\to U\initialQ\amalg U(-),
\]
is the universal inclusion of the coproduct.
The morphism underlying the counit of $\mathcal{P}$-algebras, $(\iota_{\mathcal{P}})_!\iota_{\mathcal{P}}^*\to \id$, of the form
\[
U\initial{\mathcal{P}}\amalg U(-)\to U(-),
\]
is given on the first factor by the $\mathcal{P}$-algebra structure and on the second factor by the identity.
That is, for a $\mathcal{P}$-algebra $\mathcal{A}$, the map $U\initial{\mathcal{P}} \to U\mathcal{A}$ underlies the unique $\mathcal{P}$-algebra morphism.
Then the underlying natural transformation of the composite \eqref{eq composite natural transformation} described above, at a ${\calQ}_+$-algebra $\mathcal{B}$, is
\[
\begin{tikzcd}
U\initial{\mathcal{P}}\amalg (\bar{f}\cp U\mathcal{B})\dar{\text{inclusion}}
\\
U\initial{\mathcal{P}}\amalg (\bar{f}\cp(U\initialQ\amalg U\mathcal{B})) \rar{\cong}
& U\initial{\mathcal{P}}\amalg (\bar{f}\cp U\initialQ) \amalg (\bar{f}\cp U\mathcal{B}) \dar
\\
& (\bar{f}\cp U\initialQ) \amalg (\bar{f}\cp U\mathcal{B}).
\end{tikzcd}
\]
Again, the leftmost summand of the second vertical map underlies the unique map of $\mathcal{P}$-algebras $\initial{\mathcal{P}} \to \bar{f} \cp \initialQ = \phi^* \initialQ$, which is an isomorphism by assumption. 
Thus the natural transformation \eqref{eq composite natural transformation} is an isomorphism.
\end{proof}

The functor which takes an algebra to its underlying collection does not preserve coproducts.
At the end of the following proof, and anywhere later in this section where it seems potentially confusing, we will distinguish between coproducts in categories of algebras (over $\mathcal{P}$, $\mathcal{P}_+$, ${\calQ}$, or ${\calQ}_+$) and coproducts in the categories of objects (that is, $\colorsa$ or $\colorsb$-objects) using the notation $\coprod^{\alg}$ and $\coprod^{\obj}$.

\begin{proof}[Proof of Lemma~\ref{lemma: restriction to no arity zero}]
Let $\mathcal{B}_1$ and $\mathcal{B}_2$ be ${\calQ}_+$-algebras.
We want to show that the comparison map
\begin{equation}
\label{eq:desired comparison iso}
\phi_+^*\mathcal{B}_1 \coprod \phi_+^*\mathcal{B}_2 \to \phi_+^*\left(\mathcal{B}_1\coprod \mathcal{B}_2\right)
\end{equation}
is an isomorphism of $\mathcal{P}_+$-algebras.
First, since by hypothesis $\phi^*$ preserves finite coproducts, we know that the following map is an isomorphism of $\mathcal{P}$-algebras:
\[
\phi^*(\iota_{{\calQ}})_!\mathcal{B}_1 \coprod \phi^*(\iota_{{\calQ}})_!\mathcal{B}_2 \xrightarrow{\cong} 
\phi^*\left((\iota_{{\calQ}})_!\mathcal{B}_1\coprod (\iota_{{\calQ}})_!\mathcal{B}_2\right)
\cong
\phi^*(\iota_{{\calQ}})_!\left(\mathcal{B}_1\coprod \mathcal{B}_2\right)
.
\]
By Lemma~\ref{lemma: restriction to positive is well-behaved}, we can replace $\phi^*(\iota_{{\calQ}})_!$ with $(\iota_{\mathcal{P}})_!\phi_+^*$. Then commuting the left adjoint $(\iota_{\mathcal{P}})_!$ past the coproduct, we get
\[
(\iota_{\mathcal{P}})_!\left(\phi_+^*\mathcal{B}_1 \coprod \phi_+^*\mathcal{B}_2\right)
\cong
(\iota_{\mathcal{P}})_!\phi_+^*\mathcal{B}_1 \coprod (\iota_{\mathcal{P}})_!\phi_+^*\mathcal{B}_2 
\xrightarrow {\cong}
(\iota_{\mathcal{P}})_!\phi_+^*\left(\mathcal{B}_1\coprod \mathcal{B}_2\right),
\]
and by inspection this composition is $(\iota_{\mathcal{P}})_!$ applied to~\eqref{eq:desired comparison iso}.

Using the fact that the functor of collections underlying $(\iota_{\mathcal{P}})_!$ is the coproduct with $U\initial{\mathcal{P}}$, the isomorphism of collections underlying our isomorphism of $\mathcal{P}$-algebras is
\[
U\initial{\mathcal{P}}{\coprod}^{\obj} U\left(\phi_+^*\mathcal{B}_1 {\coprod}^{\alg} \phi_+^*\mathcal{B}_2\right)
\xrightarrow{\cong}
U\initial{\mathcal{P}}{\coprod}^{\obj} U\phi_+^*\left(\mathcal{B}_1{\coprod}^{\alg} \mathcal{B}_2\right).
\]
Then since both coproducts and the forgetful functor to collections are conservative,~\eqref{eq:desired comparison iso} is an isomorphism as well.
\end{proof}

\subsection{The multilinear summand}\label{subsec: multilinear}

We will need to be able to refine the description of the composition product of Definition~\ref{definition sub prod} by keeping track of special summands.
In this section, we introduce a decomposition for composition products of a very particular form. 
The key results Lemma~\ref{lemma functoriality of multilinear summand}, Corollary~\ref{corollary: arity one gives ml iso}, and Lemma~\ref{lemma hl summands} will all be needed in Section~\ref{subsec: preservation of coproducts}.

For this section, fix color sets $\colorsa$, $\colorsb$, and $\colorsc$.
Let $X$ be a $(\colorsb,\colorsc)$-collection, let $Y$ be a positive $(\colorsa,\colorsb)$-collection, and for $i\in [k]=\{1,\ldots,k\}$, let $Z_i$ be an $\colorsa$-object.
Assuming that $Y$ is positive and that the $Z_i$ are objects rather than collections are not logically necessary, but this simplification allows us to streamline the definitions and proofs below.

We are interested in summands of the iterated composition products 
\[
X\cp \left(\coprod_{i=1}^k Y\cp Z_i\right)
 \] 
in which each factor $Z_i$ ``appears precisely once''. 
In this section, we will formalize what we mean by this, defining the \emph{multilinear decomposition} of 
$X\cp \left (\coprod Y\cp Z_i\right)$ into \emph{multilinear} and \emph{nonlinear} summands.

Using the point of view of Day powers from Definitions~\ref{definition day powers} and~\ref{definition sub prod}, we will first focus our attention on $Y\cp Z_i$, identifying a coproduct decomposition of the Day power $\left( \coprod Y\cp Z_i\right)^{\ub}$, and then define a coproduct decomposition of the full composition product
$X\cp \left (\coprod Y\cp Z_i\right)$.

The collection $Y$ splits as $Y = Y^{(1)} \amalg Y^{(\geq 2)}$ where $Y^{(1)} = \underlying{Y}$ and $Y^{(\geq 2)}$ has nothing in arity zero or one. 
The decomposition can exclude the 0-ary part of $Y$ because $Y$ is positive.
Let $S$ be the two element set $\{ 1, {\geq}2\}$.
Since $(-) \cp Z_i$ is a left adjoint, we have
\begin{equation} \label{resplitting V}
	\coprod_{i=1}^k Y \cp Z_i = \coprod_{[k] \times S} Y^{(s)} \cp Z_i.
\end{equation}

\begin{definition}
\label{defi: multilinear summand}
Let $Y$ be a positive $(\colorsa,\colorsb)$-collection, and for $i\in [k]=\{1,\ldots,k\}$, let $Z_i$ be an $\colorsa$-object.
Write $V$ for the $\colorsb$-object from \eqref{resplitting V}, and let $\ub$ be a length $m$ list in $\colorsb$.
We have that the power (see Definition~\ref{definition day powers}) is given by
\begin{align*}
V^{\ub} = \bigotimes_{j=1}^m V_{b_j} &= \bigotimes_{j=1}^m \coprod_{[k]\times S} (Y^{(s)} \cp Z_i)_{b_j} \\
&\cong \coprod_{g\times h : [m] \to [k] \times S} \bigotimes_{j=1}^m (Y^{(h(j))} \cp Z_{g(j)})_{b_j}.
\end{align*}
The \emph{multilinear summand} of $V^{\ub}$, denoted by $V^{\ub}_{\ml}$, is the subsum indexed by those $g \times h : [m] \to [k] \times S$ satisfying
\begin{itemize}
 	\item $h(j) = 1$ for all $j$, and
 	\item the map $g:[m]\to[k]$ is a bijection.
   \end{itemize}
In other words, we have
\[
	V^{\ub}_{\ml} \cong \coprod_{g : [m] \cong [k]} \bigotimes_{j=1}^m (\underlying{Y} \cp Z_{g(j)})_{b_j}.
\]
The \emph{nonlinear summand} $V^{\ub}_{\notml}$ consists of the subsum indexed by the remaining choices of $g\times h$.
We have $V^{\ub} = V^{\ub}_{\ml} \amalg V^{\ub}_{\notml}$ and both $V_{\ml}$ and $V_{\notml}$ are functors from $\listsb$ to $\mathcal{E}$.
\end{definition}

\begin{remark}
\label{remark: Day powers multilinear naturality}
Suppose that $Y \to Y'$ is a morphism of positive $(\colorsa,\colorsb)$-collections and, for $i\in [k]$, $Z_i \to Z_i'$ is a map of $\colorsa$-objects.
Further, let $V = \coprod_{i=1}^k Y \cp Z_i$ and $V' = \coprod_{i=1}^k Y' \cp Z_i'$.
Then for every list $\ub$ of elements of $\colorsb$, the induced morphism $V^{\ub} \to (V')^{\ub}$ splits as a sum of $V^{\ub}_{\ml} \to (V')^{\ub}_{\ml}$ and $V^{\ub}_{\notml} \to (V')^{\ub}_{\notml}$.
Further, if each $Z_i \to Z_i'$ is an identity and $Y \to Y'$ is an isomorphism in arity one, then the multilinear summand $V^{\ub}_{\ml} \to (V')^{\ub}_{\ml}$ is an isomorphism.
\end{remark}
This last condition of the remark includes, of course, the inclusion of $\underlying{Y}$ into $Y$.

\begin{definition}
\label{def: multilinear decomp}
Given the multilinear decomposition of the Day powers, we extend to a \emph{multilinear decomposition} of the product $X \cp \coprod_{i=1}^k (Y \cp Z_i)$ as follows
\[
	\left( X \cp \coprod_{i=1}^k (Y \cp Z_i) \right) = \left( X \cp \coprod_{i=1}^k (Y \cp Z_i) \right)_{\ml} 
	\amalg
	\left( X \cp \coprod_{i=1}^k (Y \cp Z_i) \right)_{\notml}
\]
into multilinear and nonlinear summands.
Using the $V$ notation from above, the multilinear summand is given by 
\[
	\left( X \cp \coprod_{i=1}^k (Y \cp Z_i) \right)_{\ml,c} \coloneqq \int^{\ub \in \listsb} X\vprof{\ub}{c} \otimes V^{\ub}_{\ml}
\]
and similarly for the nonlinear summand.
\end{definition}

\begin{definition}
Given two collections equipped with multilinear decompositions, we call a map between them which respects the decompositions \emph{homogeneous}.
\end{definition}

Unraveling the definitions of the summands yields the following lemma.
\begin{lemma}
\label{lemma functoriality of multilinear summand}
Let $X \to X'$ be a map of $(\colorsb,\colorsc)$-collections,
$Y \to Y'$ be a map of positive $(\colorsa,\colorsb)$-collections, and,  
for $i\in [k]=\{1,\ldots,k\}$, let $Z_i \to Z'_i$ be a map of $\colorsa$-objects.
Then the map
\begin{align*}
X \cp \coprod_{i=1}^k (Y \cp Z_i) 
&\to
X' \cp \coprod_{i=1}^k (Y' \cp Z'_i)
\end{align*} 
is homogeneous.
\qed
\end{lemma}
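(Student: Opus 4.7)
The plan is to reduce the lemma directly to Remark~\ref{remark: Day powers multilinear naturality}, passing through the coend formula for $\cp$. The content is essentially an unpacking of the definitions together with the distributivity of $\otimes$ and coends over coproducts.

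First I would unravel the Day power decomposition. For each list $\ub = (b_1, \dots, b_m)$ in $\colorsb$, the object $V^{\ub}$ is expressed as a coproduct indexed by pairs $g \times h : [m] \to [k] \times S$, with $V^{\ub}_{\ml}$ comprising those indices for which $h$ is constantly $1$ and $g$ is bijective, and $V^{\ub}_{\notml}$ comprising the rest. Remark~\ref{remark: Day powers multilinear naturality} says precisely that the map $V^{\ub} \to (V')^{\ub}$ induced by $Y \to Y'$ and the family $Z_i \to Z'_i$ respects this entire coproduct indexing; in particular it restricts to maps $V^{\ub}_{\ml} \to (V')^{\ub}_{\ml}$ and $V^{\ub}_{\notml} \to (V')^{\ub}_{\notml}$.

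Next I would transport this splitting through the composition product. By Definition~\ref{def: multilinear decomp}, the multilinear summand of $X \cp V$ in profile $\vprof{\,\,}{c}$ is the coend $\int^{\ub \in \listsb} X\vprof{\ub}{c} \otimes V^{\ub}_{\ml}$, and likewise for the nonlinear summand and for $X' \cp V'$. Because $\otimes$ preserves coproducts in each variable and coends commute with coproducts, tensoring the split map $V^{\ub} \to (V')^{\ub}$ with the map $X\vprof{\ub}{c} \to X'\vprof{\ub}{c}$ and then taking the coend over $\ub$ preserves the separation into multilinear and nonlinear pieces. This is exactly the homogeneity assertion.

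The only substantive input is the cited remark; everything else is general nonsense about how colimits interact with $\otimes$. Consequently I do not expect any real obstacle, as the lemma is a formal consequence of the way the multilinear decomposition was built in Definitions~\ref{defi: multilinear summand} and~\ref{def: multilinear decomp}.
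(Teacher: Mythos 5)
Your proposal is correct and matches the paper's (essentially omitted) argument: the paper simply states that the lemma follows by "unraveling the definitions of the summands," and your unraveling — splitting $V^{\ub} \to (V')^{\ub}$ via Remark~\ref{remark: Day powers multilinear naturality} and then pushing the splitting through the coend $\int^{\ub} X\vprof{\ub}{c} \otimes (-)$ using that $\otimes$ and coends commute with coproducts — is exactly the intended route. The one point worth making explicit is that the decomposition $V^{\ub} = V^{\ub}_{\ml} \amalg V^{\ub}_{\notml}$ is natural in $\ub \in \listsb$ (as recorded at the end of Definition~\ref{defi: multilinear summand}), which is what licenses splitting the coend itself into a coproduct of coends; you use this implicitly and it is available.
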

In particular, there are induced maps 
\begin{align*}
\left(X \cp \coprod_{i=1}^k (Y \cp Z_i)\right)_{\ml}
&\to
\left(X' \cp \coprod_{i=1}^k (Y' \cp Z'_i)\right)_{\ml}
\\
\left(X \cp \coprod_{i=1}^k (Y \cp Z_i)\right)_{\notml}
&\to
\left(X' \cp \coprod_{i=1}^k (Y' \cp Z'_i)\right)_{\notml}
\end{align*}
which commute with inclusions of summands. 

Combining this lemma with Remark~\ref{remark: Day powers multilinear naturality} gives the following.
\begin{corollary}
\label{corollary: arity one gives ml iso}
Suppose that $X$ is a $(\colorsb,\colorsc)$-collection, $Y$ is a positive $(\colorsa,\colorsb)$-collection, and, for $i\in [k]=\{1,\ldots,k\}$, $Z_i$ is an $\colorsa$-object.
Then the inclusion $\underlying{Y} \to Y$ induces an isomorphism
\[
\left(X \cp \coprod_{i=1}^k (\underlying{Y} \cp Z_i)\right)_{\ml}
\xrightarrow{\cong}
\left(X \cp \coprod_{i=1}^k (Y \cp Z_i)\right)_{\ml}
\]
of multilinear summands. \qed
\end{corollary}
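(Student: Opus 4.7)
The plan is to assemble the corollary from the two inputs cited in the statement, applying the functoriality lemma to a very specific morphism and then observing that the conditions of the remark are met.

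First, I apply Lemma~\ref{lemma functoriality of multilinear summand} to the triple of morphisms $\id_X : X \to X$, the canonical inclusion $\underlying{Y} \hookrightarrow Y$, and the identities $\id_{Z_i} : Z_i \to Z_i$. This produces a homogeneous map
\[
	X \cp \coprod_{i=1}^k (\underlying{Y} \cp Z_i) \to X \cp \coprod_{i=1}^k (Y \cp Z_i),
\]
and in particular an induced map between the multilinear summands which is exactly the morphism whose invertibility we need to establish.

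Next, I invoke Remark~\ref{remark: Day powers multilinear naturality}. The inclusion $\underlying{Y} \hookrightarrow Y$ is by construction the identity map in arity one (both collections are concentrated there with the same value) and the $Z_i$ are not being modified, so the hypothesis of the second sentence of that remark applies. This gives that for every list $\ub \in \listsb$, writing $V = \coprod_{i=1}^k \underlying{Y} \cp Z_i$ and $V' = \coprod_{i=1}^k Y \cp Z_i$, the map $V^{\ub}_{\ml} \to (V')^{\ub}_{\ml}$ is an isomorphism in $\mathcal{E}$.

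Finally, by Definition~\ref{def: multilinear decomp}, the multilinear summand of $X \cp (-)$ is computed pointwise in $c \in \colorsc$ as the coend $\int^{\ub} X\vprof{\ub}{c} \otimes (-)^{\ub}_{\ml}$. Coends preserve isomorphisms of the integrand, and tensoring with $X\vprof{\ub}{c}$ preserves isomorphisms as well, so the pointwise isomorphisms $V^{\ub}_{\ml} \xrightarrow{\cong} (V')^{\ub}_{\ml}$ assemble to an isomorphism of the coends, which is the map in question. No step here is a real obstacle; the work has already been done in setting up the multilinear decomposition so that it is functorial (Lemma~\ref{lemma functoriality of multilinear summand}) and so that it only sees the arity-one part of $Y$ (Remark~\ref{remark: Day powers multilinear naturality}). \qed
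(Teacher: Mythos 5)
Your proposal is correct and follows exactly the route the paper intends: the corollary is stated with an immediate \qed precisely because it is the combination of Lemma~\ref{lemma functoriality of multilinear summand} (giving a homogeneous map, hence a map of multilinear summands) with the final sentence of Remark~\ref{remark: Day powers multilinear naturality} (giving the isomorphism $V^{\ub}_{\ml} \xrightarrow{\cong} (V')^{\ub}_{\ml}$, since $\underlying{Y}\to Y$ is an isomorphism in arity one and the $Z_i$ are fixed), followed by passing the pointwise isomorphism through the coend defining the multilinear summand of $X\cp(-)$. Your write-up simply makes explicit the coend step that the paper leaves implicit.
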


\begin{lemma}
\label{lemma hl summands}
Let $W$ be a $(\colorsc,\colorsd)$-collection.
Let $X$ be a positive $(\colorsb,\colorsc)$-collection.
Let $Y$ be a positive $(\colorsa,\colorsb)$-collection. 
For $i\in [k]=\{1,\ldots,k\}$, let $Z_i$ be $\colorsa$-objects.
The $\colorsd$-object map
\begin{equation*}
W \cp \coprod_{i=1}^k ((X\cp Y) \cp Z_i) \to W \cp X \cp \coprod_{i=1}^k (Y \cp Z_i)
\end{equation*}
induced by the maps $(X\cp Y) \cp Z_i \cong X \cp (Y\cp Z_i) \to X \cp \coprod_{i=1}^k (Y \cp Z_i)$
is homogeneous.
\end{lemma}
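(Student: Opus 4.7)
The plan is to verify homogeneity directly at the level of Day powers by showing that the map carries the LHS multilinear summand into the RHS multilinear summand, and the LHS nonlinear summand into the RHS nonlinear summand.

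First I would unpack the two multilinear summands explicitly. The LHS decomposition is defined with outer factor $W$ and positive inner collection $X\cp Y$ (positive because both $X$ and $Y$ are, as one checks directly from the coend definition). Using the identification $\underlying{X\cp Y}\vprof{a}{c} \cong \coprod_{b\in\colorsb} \underlying{X}\vprof{b}{c} \otimes \underlying{Y}\vprof{a}{b}$ that follows from positivity of $X$ and $Y$, a generator of the LHS multilinear summand at output color $d$ consists of an arity-$k$ operation $w\in W\vprof{\uc}{d}$, $k$ arity-one $X$-operations, $k$ arity-one $Y$-operations, and one element of each $Z_{\sigma(j)}$ for some bijection $\sigma\colon[k]\to[k]$. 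On the RHS, regarded as $(W\cp X)\cp\coprod_i(Y\cp Z_i)$ with outer $W\cp X$ and inner $Y$, a generator of the multilinear summand consists of an element of $(W\cp X)\vprof{\ub}{d}$ with $|\ub|=k$, together with $k$ arity-one $Y$-operations and one element of each $Z_{\sigma'(j)}$ for some bijection $\sigma'$.

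Next, I would track the map on such an element. The map arises from the associator $(X\cp Y)\cp Z_i \cong X\cp(Y\cp Z_i)$ summand-by-summand, followed by the canonical map $\coprod_i(X\cp A_i) \to X\cp\coprod_i A_i$, and the outer associator. Applied to the multilinear data, this regroups $w\otimes((x_j\otimes y_j)\otimes z_j)_{j=1}^k$ as $(w\otimes(x_1,\ldots,x_k))\otimes((y_1,z_1),\ldots,(y_k,z_k))$, where the first tensor is an element of $W\cp X$ at a profile of length $k$ and the second consists of $k$ arity-one $Y$-operations attached to the distinct $Z_{\sigma(j)}$; this visibly lies in the RHS multilinear summand with $\sigma'=\sigma$. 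For the complementary direction, a short case analysis suffices: an LHS-nonlinear generator fails either (a) its $i$-assignment $[m]\to[k]$ is not a bijection, or (b) some $(X\cp Y)$-factor has arity $\geq 2$. After regrouping, each LHS $i$-index is repeated on the RHS with multiplicity equal to the arity of the corresponding $X$-factor, so case (a), as well as the sub-case of (b) where the offending arity comes from the $X$-part, both yield a non-bijective multiset of $i$-indices on the right, while the sub-case of (b) where the arity comes from the $Y$-part produces an arity-$\geq 2$ $Y$-factor on the right. In every case the image is RHS-nonlinear.

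The main obstacle, I expect, is bookkeeping: the two multilinear decompositions are defined relative to different positive inner collections (namely $X\cp Y$ versus $Y$), and one must confirm that positivity of both $X$ and $Y$ is exactly what is needed so that arity-one $(X\cp Y)$-operations on the left correspond precisely to pairs consisting of an arity-one $X$-operation and an arity-one $Y$-operation on the right. Once this identification is in place and the associator is chased through, the verification reduces to a direct inspection of the Day-power summands.
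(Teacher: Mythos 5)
Your proposal is correct and follows essentially the same route as the paper's sketch proof: both arguments reduce the multilinear summand to arity-one data (using positivity to identify $\underlying{X\cp Y}$ with $\underlying{X}\cp\underlying{Y}$), observe that distributing $\underlying{X}$ over the coproduct only reindexes the bijection, and handle the nonlinear summand by the same case split, with the decomposition $(X\cp Y)^{(\geq 2)} = \underlying{X}\cp Y^{(\geq 2)}\amalg X^{(\geq 2)}\cp Y$ appearing implicitly in your sub-cases of (b). Your observation that an $X$-factor of arity $r$ repeats the index $i$ with multiplicity $r$ on the right is exactly the paper's argument for why an $X^{(\geq 2)}$ factor forces non-bijectivity after distribution.
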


\begin{proof}[Sketch proof]
On the left side, by Remark~\ref{remark: Day powers multilinear naturality}, taking multilinear summands of the relevant Day powers commutes with passing to the arity one part of $X\cp Y$. 
That is,
\[
\left(\coprod_{i=1}^k (X\cp Y)\cp Z_i\right)_{\ml}
\cong
\left(\coprod_{i=1}^k \underlying{X\cp Y}\cp Z_i\right)_{\ml}
\cong
\left(\coprod_{i=1}^k ((\underlying{X}\cp \underlying{Y})\cp Z_i\right)_{\ml},
\]
where the condition to be in the multilinear summand is merely that the function $g$ from Definition~\ref{defi: multilinear summand} is a bijection.
Since $\underlying{X}$ is concentrated in arity one, $\underlying{X}\cp(-)$ distributes over the coproduct by Lemma~\ref{one right adjoint}, and this distribution only changes the function $g$ by reindexing.
This shows that the multilinear summand is preserved by the map in the lemma.
This also shows that if we are in a summand where all $X\cp Y$ factors are in arity one (i.e., we are working with $\underlying{X\cp Y}$) but $g$ is not a bijection, it cannot become a bijection after distribution. 
This is part of the proof that the nonlinear summand is preserved by the map in the lemma.

There is a second way for a summand to be nonlinear, namely if it contains a factor of the form $(X\cp Y)^{(\geq 2)}$. 
Because $X$ and $Y$ are both positive, we have
\[
	(X\cp Y)^{(\geq 2)} = \underlying{X} \cp Y^{(\geq 2)} \amalg X^{(\geq 2)} \cp Y,
\]
so we must have either a factor of $X^{(\geq 2)}$ or a factor of $Y^{(\geq 2)}$.
In the former case, the $g$ on the right side of the distribution cannot be a bijection because some index $i$ in its codomain has multiple preimages. 
In the latter case, the factor from $Y^{(\geq 2)}$ still lives in arity bigger than one after distributing the $X$ factor on the right side and thus will still be nonlinear.
\end{proof}

\subsection{Preservation of coproducts}
\label{subsec: preservation of coproducts}

Now that we have the description of the multilinear summands, we can prove Lemma~\ref{lemma: reduced preserves finite coproducts}.

We will use a description of colimits in algebras over operads using reflexive coequalizers in the ground category due to Rezk~\cite{Rezk:SASCO}.
The only case we will use is the finite coproduct of algebras which is realized by the following reflexive coequalizer.

\begin{proposition}
\label{prop: Rezk formula for finite coproducts}
For a finite coproduct of $\mathcal{P}$-algebras $\mathcal{A}_1$ through $\mathcal{A}_n$, the coproduct $\coprod^{\alg} \mathcal{A}_i$ has as its underlying object the reflexive coequalizer
\[
\mathcal{P}\cp \left( {\coprod_i}^{\obj} \mathcal{P}\cp \mathcal{A}_i \right)\rightrightarrows \mathcal{P}\cp \left( {\coprod_i}^{\obj} \mathcal{A}_i \right)
\]
where the $\coprod^{\obj}$ coproducts are taken in the ground category and the two maps are
\begin{enumerate}
	\item composition in the operad $\mathcal{P}$ and
	\item the action by the operad $\mathcal{P}$ on the algebras $\mathcal{A}_1$ through $\mathcal{A}_n$. \qed
\end{enumerate}
\end{proposition}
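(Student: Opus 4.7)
The plan is to exhibit $\coprod^{\alg}_i\mathcal{A}_i$ as the claimed coequalizer by using the standard bar resolution of each algebra, combined with the preservation properties of the free functor and the forgetful functor.

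First, I would recall the canonical presentation of each $\mathcal{A}_i$ as a reflexive coequalizer in $\algebras{\mathcal{P}}$:
\[
\begin{tikzcd}
\mathcal{P}\cp \mathcal{P}\cp \mathcal{A}_i \rar[shift left, "\mu_\mathcal{P}\cp\mathcal{A}_i"] \rar[shift right, "\mathcal{P}\cp\lambda_{\mathcal{A}_i}" swap] & \mathcal{P}\cp\mathcal{A}_i \rar{\lambda_{\mathcal{A}_i}} & \mathcal{A}_i,
\end{tikzcd}
\]
with common section $\mathcal{P}\cp\eta_\mathcal{P}\cp\mathcal{A}_i$. Then, since colimits commute with colimits, $\coprod^{\alg}_i\mathcal{A}_i$ is the reflexive coequalizer in $\algebras{\mathcal{P}}$ of the coproduct diagram $\coprod^{\alg}_i(\mathcal{P}\cp\mathcal{P}\cp\mathcal{A}_i)\rightrightarrows\coprod^{\alg}_i(\mathcal{P}\cp\mathcal{A}_i)$.

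Next I would pass to underlying $\colorsa$-objects. The forgetful functor $U$ from $\algebras{\mathcal{P}}$ to $\colorsa$-objects preserves reflexive coequalizers, because $\mathcal{P}\cp(-)$ does (as already noted in the proof of Proposition~\ref{proposition associativity}) and the adjunction $\mathcal{P}\cp(-)\dashv U$ is monadic. Thus $U(\coprod^{\alg}_i\mathcal{A}_i)$ is the coequalizer in $\colorsa$-objects of the images of the two parallel arrows above. Since $\mathcal{P}\cp(-)$ is a left adjoint by Lemma~\ref{one right adjoint}, it preserves coproducts of $\colorsa$-objects, so there is a natural isomorphism
\[
U\!\left({\coprod}^{\alg}_i \mathcal{P}\cp X_i\right) \cong \mathcal{P}\cp\left({\coprod}^{\obj}_i X_i\right)
\]
for any family of $\colorsa$-objects $X_i$. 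Applying this with $X_i = \mathcal{P}\cp\mathcal{A}_i$ and with $X_i = \mathcal{A}_i$ rewrites the underlying coequalizer in exactly the form asserted in the proposition.

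Finally, I would check that under these identifications the two parallel maps become the operadic composition $\mathcal{P}\cp\mu_\mathcal{P}$ and the action on each $\mathcal{A}_i$ respectively, and that the common section is induced by $\eta_\mathcal{P}$ so the coequalizer is reflexive. This is a naturality check unwinding the definitions of coproducts of free algebras. The only real obstacle is making the monadicity/preservation step rigorous, i.e., justifying that $U$ carries the algebra coequalizer computing $\coprod^{\alg}_i\mathcal{A}_i$ to the $\colorsa$-object coequalizer in the statement; this rests on the fact that $\mathcal{P}\cp(-)$ preserves reflexive coequalizers, which is standard and already invoked elsewhere in the paper.
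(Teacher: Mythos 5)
Your argument is correct. Note that the paper itself offers no proof of this proposition: it is stated with a \verb|\qed| and attributed to Rezk's thesis, so there is no in-text argument to compare against. What you have written is the standard argument one would expect to find there: present each $\mathcal{A}_i$ by its canonical (Beck) reflexive coequalizer of free algebras, commute the coproduct past the coequalizer, use that the free functor $\mathcal{P}\cp(-)$ preserves coproducts to rewrite a coproduct of free algebras as a free algebra on a $\coprod^{\obj}$, and use that the forgetful functor creates reflexive coequalizers because $\mathcal{P}\cp(-)$ preserves them (a fact the paper already invokes in the proof of Proposition~\ref{proposition associativity}). The only step deserving a little extra care is the one you flag at the end: under the isomorphism $\coprod^{\alg}_i F(X_i)\cong F(\coprod^{\obj}_i X_i)$ the map $\coprod^{\alg}_i\epsilon_{F\mathcal{A}_i}$ becomes the composite of the canonical map $\coprod^{\obj}_i(\mathcal{P}\cp\mathcal{A}_i)\to\mathcal{P}\cp\coprod^{\obj}_i\mathcal{A}_i$ with $\mu_{\mathcal{P}}$, which is exactly the ``composition in the operad'' map as it is later used in Lemma~\ref{lemma: comparison map is homogeneous}; spelling this out would complete the proof, but it is indeed a routine unwinding of the universal properties.
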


Now we will realize the extension morphism
\[
\mathcal{P}\cp_{\underlying{\mathcal{P}}}(\bar{f} \cp \underlying{{\calQ}})
(b_1,\ldots, b_n;-)
\to \bar{f} \cp {\calQ}
(b_1,\ldots, b_n;-)
\]
as a summand of the comparison isomorphism between a coproduct of restrictions and a restriction of a coproduct $\coprod^{\alg}_i \phi^* \mathcal{A}_i \to \phi^* \coprod^{\alg}_i \mathcal{A}_i$.

\begin{notation}
\label{notation: vecb}
Let $(f,\phi):(\colorsa, \mathcal{P})\to (\colorsb,{\calQ})$ be a map of positive colored operads.
Fix a positive length list $\ub$ of $\colorsb$ and a color $a$ of $\colorsa$. 
By abuse of notation write $b_i$ for the $(\emptyset,\colorsb)$-collection which is the unit in the ground category concentrated in profile $(\,\,;b_i)$. 
Write $\freeq$ from $\colorsb$-objects to ${\calQ}$-algebras for the left adjoint to the forgetful functor.
\end{notation}

\begin{lemma}[Homogeneity of the comparison map]
\label{lemma: comparison map is homogeneous}
Let $(f,\phi):\mathcal{P}\to {\calQ}$ be a map of positive colored operads, and let $\ub$ be a positive length list in $\colorsb$.
\begin{enumerate}
\item 
\label{homogeneity lemma item: diagram for coprod of pullback}
The object underlying the coproduct of pulled back algebras $\coprod^{\alg}\phi^*\freeq(b_i)$ can be realized as the coequalizer of a homogeneous diagram
\begin{equation*}
\label{formula for coproduct of pullback of free}
\mathcal{P}\cp \left( {\coprod_i}^{\obj} (\mathcal{P}\cp \bar{f}\cp {\calQ})\cp b_i \right)\rightrightarrows \mathcal{P}\cp \left( {\coprod_i}^{\obj} (\bar{f}\cp{\calQ})\cp b_i \right),
\end{equation*}
with maps induced by 
\begin{itemize}
	\item the action of $\mathcal{P}$ on $\bar{f}\cp{\calQ}$ using $\phi$ and $\mu_{{\calQ}}$ (Lemma~\ref{lemma: left module structure}) and
	\item the distributor for the coproduct followed by the composition $\mu_{\mathcal{P}}$.
\end{itemize}
\item
\label{homegeneity lemma item: diagram for pullback of coprod}
The object underlying the pullback of the coproduct algebra $\phi^*\coprod^{\alg}\freeq(b_i)$
can be realized as the coequalizer of a homogeneous diagram
\begin{equation*}
\label{formula for pullback of coproduct of free}
\bar{f}\cp {\calQ}\cp \left( {\coprod_i}^{\obj} ({\calQ}\cp {\calQ})\cp b_i \right)\rightrightarrows \bar{f}\cp {\calQ}\cp \left( {\coprod_i}^{\obj} {\calQ}\cp b_i \right),
\end{equation*}
with maps induced by
\begin{itemize}
	\item the operadic composition map $\mu_{{\calQ}}$ and
	\item the distributor for the coproduct followed by the composition $\mu_{{\calQ}}$.
\end{itemize}
\item 
\label{homogeneity lemma item: comparison map}
given these presentations, the map underlying the comparison map
\[
{\coprod}^{\alg}\phi^*\freeq(b_i)\to \phi^*{\coprod}^{\alg}\freeq(b_i)
\]
is induced by a homogeneous map of coequalizer diagrams.
\end{enumerate}
\end{lemma}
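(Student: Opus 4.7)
The proof is an unpacking of Rezk's presentation of coproducts of algebras (Proposition~\ref{prop: Rezk formula for finite coproducts}): I apply it on both sides, then identify every structural map and every vertical comparison map as a composite of homogeneous building blocks, namely parameter-wise functoriality maps (Lemma~\ref{lemma functoriality of multilinear summand}) and distributors (Lemma~\ref{lemma hl summands}). Throughout, $Z_i = b_i$ remains fixed while $X$ and $Y$ vary among the various summands.

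For part~\ref{homogeneity lemma item: diagram for coprod of pullback}, the $\mathcal{P}$-algebra $\phi^*\freeq(b_i)$ has underlying $\colorsa$-object $\bar{f}\cp {\calQ}\cp b_i$ with action induced by $\lambda_\phi$ from Lemma~\ref{lemma: left module structure}, so substituting into Rezk's coequalizer for $\coprod^{\alg}\phi^*\freeq(b_i)$ produces the claimed diagram. Keeping $X = \mathcal{P}$ fixed and taking $Y$ to be $\bar{f}\cp {\calQ}$ (target) or $\mathcal{P}\cp \bar{f}\cp {\calQ}$ (source)---both positive since ${\calQ}$ is---the action map acts on the $Y$-slot via $\lambda_\phi:\mathcal{P}\cp \bar{f}\cp {\calQ}\to \bar{f}\cp {\calQ}$ and is homogeneous by Lemma~\ref{lemma functoriality of multilinear summand}, while the composition map factors as a distributor (Lemma~\ref{lemma hl summands}, reorganizing $\mathcal{P}\cp \coprod_i(\mathcal{P}\cp \bar{f}\cp {\calQ})\cp b_i$ into $\mathcal{P}\cp \mathcal{P}\cp \coprod_i(\bar{f}\cp {\calQ})\cp b_i$) followed by $\mu_{\mathcal{P}}$ applied to the $X$-slot. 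Part~\ref{homegeneity lemma item: diagram for pullback of coprod} is closely analogous: apply Rezk's formula to $\coprod^{\alg}\freeq(b_i)$ inside ${\calQ}$-algebras (with $\freeq(b_i)={\calQ}\cp b_i$), then apply the left adjoint $\bar{f}\cp (-)$, which preserves coequalizers, to produce the claimed diagram of $\colorsa$-objects. With $X = \bar{f}\cp {\calQ}$ fixed and $Y$ equal to ${\calQ}$ (target) or ${\calQ}\cp {\calQ}$ (source), the same argument applies with $\mu_{{\calQ}}$ playing both roles.

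For part~\ref{homogeneity lemma item: comparison map}, the comparison map on coequalizers is induced by vertical maps between the two diagrams. Using that $\bar{f}\cp (-)$ commutes with coproducts (being arity one, hence a left adjoint by Lemma~\ref{one right adjoint}), the vertical map between the bottom rows factors as
\[
\mathcal{P}\cp \coprod_i\bar{f}\cp {\calQ}\cp b_i \;\xrightarrow{\;\cong\;}\; \mathcal{P}\cp \bar{f}\cp \coprod_i{\calQ}\cp b_i \;\xrightarrow{\;\psi\cp \coprod_i{\calQ}\cp b_i\;}\; \bar{f}\cp {\calQ}\cp \coprod_i{\calQ}\cp b_i,
\]
where $\psi:\mathcal{P}\cp \bar{f}\to \bar{f}\cp {\calQ}$ is the 2-cell from Remark~\ref{remark: alt pres col op map}; the vertical map between the top rows admits an analogous factorization with an additional inner $\mu_{{\calQ}}$. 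The second step is a map of the $X$-slot, hence homogeneous by Lemma~\ref{lemma functoriality of multilinear summand}. The first step is a compatibility of multilinear decompositions under pulling $\bar{f}\cp (-)$ through the coproduct, which follows directly from the explicit formula in Definition~\ref{defi: multilinear summand}: in both decompositions the multilinear summand is precisely the part where each $b_i$ contributes exactly once and every ${\calQ}$-factor lies in arity one, so the two descriptions are identified summand-wise by the distributor isomorphism. Commutativity of the resulting vertical-horizontal squares and the identification of the induced map on coequalizers with the canonical comparison $\coprod^{\alg}\phi^*\freeq(b_i)\to \phi^*\coprod^{\alg}\freeq(b_i)$ are standard diagram chases using the universal property of the coproduct.

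The principal obstacle will be the bookkeeping: correctly identifying each structural and vertical map as a composite of maps covered by Lemmas~\ref{lemma functoriality of multilinear summand} and~\ref{lemma hl summands} with consistent choices of decomposition on source and target, and verifying the compatibility of the distributor isomorphism $\coprod_i\bar{f}\cp (-)\cong \bar{f}\cp \coprod_i(-)$ with the multilinear decomposition (a case not directly covered by Lemma~\ref{lemma hl summands}, as $\bar{f}$ is concentrated in arity one rather than positive). Once these identifications are made, homogeneity at each step and of the overall comparison morphism follows immediately by closure of the homogeneous maps under composition.
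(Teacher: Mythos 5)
Your proof follows essentially the same route as the paper's: Rezk's coequalizer presentation (Proposition~\ref{prop: Rezk formula for finite coproducts}) on both sides, homogeneity of the first parallel map via Lemma~\ref{lemma functoriality of multilinear summand} and of the second via Lemma~\ref{lemma hl summands} coupled with Lemma~\ref{lemma functoriality of multilinear summand}, and the comparison map assembled from repeated applications of $\psi$ (equivalently $\phi$) and distributors $\coprod(\bar{f}\cp -)\to \bar{f}\cp\coprod(-)$. Your one stated worry is unnecessary: a collection concentrated in arity one is in particular positive (its arity-zero part is the initial object), so the distributor involving $\bar{f}$ is already covered by Lemma~\ref{lemma hl summands} with $X=\bar{f}$.
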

\begin{proof}
The presentation as a coequalizer in part \ref{homogeneity lemma item: diagram for coprod of pullback} is a direct application of Proposition~\ref{prop: Rezk formula for finite coproducts}.
Since $\bar{f}\cp -$, the functor underlying $\phi^*$, commutes with colimits, the same proposition
yields the presentation as a coequalizer in part \ref{homegeneity lemma item: diagram for pullback of coprod} as well.

In both cases, homogeneity follows from Lemma~\ref{lemma functoriality of multilinear summand} for the first of the two parallel maps.
For the second parallel map it follows from Lemma~\ref{lemma hl summands} coupled with another application of Lemma~\ref{lemma functoriality of multilinear summand}.

Given these presentations, the comparison map in part \ref{homogeneity lemma item: comparison map} is induced by a map of coequalizer diagrams whose components are in turn induced by repeated use of $\phi:\mathcal{P}\cp \bar{f}\to \bar{f}\cp {\calQ}$ and distributors $\coprod (\bar{f}\cp -)\to \bar{f}\cp \coprod -$. 
Every map involved is homogeneous by Lemma~\ref{lemma functoriality of multilinear summand} or~\ref{lemma hl summands}.
\end{proof}

The preceding lemma identifies a splitting of ${\coprod}^{\alg}\phi^*\freeq(b_i)$ into what would be reasonable to call `multilinear' and `nonlinear' parts, despite being of a different form than our general framework from Definition~\ref{def: multilinear decomp}.
We will use this terminology without further comment, and also refer to the map from Lemma~\ref{lemma: comparison map is homogeneous}\eqref{homogeneity lemma item: comparison map} as being homogeneous (the codomain of the comparison map is isomorphic, as a collection, to $\bar{f} \cp {\calQ} \cp \coprod^{\obj} {b_i}$, and so already has a multilinear decomposition).

The following lemma will provide an identification of certain multilinear summands.
\begin{lemma}
\label{lemma: if concentrated in arity one then the multilinear summand picks out the objects}
Suppose that $Y$ is a positive $(\colorsb,\colorsc)$-collection and $X$ is a $(\colorsc,\colorsd)$-collection. Then for each $d\in \colorsd$ and each list $\ub$ of elements of $\colorsb$, there is an isomorphism
\[X\cp \left(\coprod_{i=1}^k Y\cp b_i\right)_{\ml}\vprof{\,}{d}\cong (X\cp \underlying{Y})\vprof{\ub}{d},\]
natural in $X$ and $Y$.
\end{lemma}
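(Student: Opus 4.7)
The plan is to unwind both sides of the claimed isomorphism as coends over $\listsc$ indexed by the same formula, and verify agreement term-by-term. Set $V \coloneqq \coprod_{i=1}^k Y\cp b_i$, so that the left-hand side is
\[
(X \cp V)_{\ml}\vprof{\,}{d} = \int^{\ub' \in \listsc} X\vprof{\ub'}{d} \otimes V^{\ub'}_{\ml},
\]
while the right-hand side, by Definition~\ref{definition sub prod}, is
\[
(X \cp \underlying{Y})\vprof{\ub}{d} = \int^{\ub' \in \listsc} X\vprof{\ub'}{d} \otimes \underlying{Y}^{\ub'}(\ub).
\]
It therefore suffices to exhibit a natural isomorphism $V^{\ub'}_{\ml} \cong \underlying{Y}^{\ub'}(\ub)$ of functors $\listsc \to \mathcal{E}$.

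For the left factor, I would first appeal to Corollary~\ref{corollary: arity one gives ml iso} to replace $Y$ by $\underlying{Y}$, so that $V^{\ub'}_{\ml}$ may be computed from the simpler collection $\coprod_{i=1}^k \underlying{Y}\cp b_i$. Using the formula for composition with an arity-one collection (which gives $(\underlying{Y}\cp b_i)\vprof{\,}{b'} \cong \underlying{Y}\vprof{b_i}{b'}$ by direct inspection of Definition~\ref{definition sub prod}) and the description from Definition~\ref{defi: multilinear summand}, together with the observation that the $h$-component of the indexing data is forced to be constantly $1$, one obtains, with $m = |\ub'|$,
\[
V^{\ub'}_{\ml} \cong \coprod_{g : [m] \cong [k]} \bigotimes_{j=1}^m \underlying{Y}\vprof{b_{g(j)}}{b'_j},
\]
which is the initial object unless $m = k$.

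For the right factor, Lemma~\ref{lemma: Day power when Y is arity one} directly yields
\[
\underlying{Y}^{\ub'}(\ub) \cong \coprod_{\sigma \in \Sigma_m} \bigotimes_{j=1}^m \underlying{Y}\vprof{b_{\sigma^{-1}(j)}}{b'_j},
\]
which likewise vanishes unless $m = k$. The correspondence $\sigma \leftrightarrow \sigma^{-1}$ between $\Sigma_k$ and the set of bijections $[k] \to [k]$ matches the summands of the two expressions, yielding the desired natural isomorphism of functors $\listsc \to \mathcal{E}$. Naturality in $X$ is automatic from the coend formula, while naturality in $Y$ is inherited from naturality at each step.

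The argument is essentially bookkeeping; the only mild obstacle is tracking the coend indexing carefully and confirming that the combinatorial match $g \leftrightarrow \sigma^{-1}$ is compatible with the $\Sigma_m$-action on $\ub'$ (so that it truly assembles into a morphism of functors on $\listsc$), but this follows formally from the definitions.
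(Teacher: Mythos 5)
Your proposal is correct and follows essentially the same route as the paper's own proof: reduce to comparing $V^{\uc}_{\ml}$ with $\underlying{Y}^{\uc}(\ub)$ under the coend, replace $Y$ by $\underlying{Y}$ via Corollary~\ref{corollary: arity one gives ml iso}, compute the multilinear summand explicitly, and identify it with the Day power via Lemma~\ref{lemma: Day power when Y is arity one}. Your extra care with the $g \leftrightarrow \sigma^{-1}$ matching just makes explicit what the paper leaves implicit.
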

\begin{proof}
By Corollary~\ref{corollary: arity one gives ml iso}, we may replace $Y$ by $\underlying{Y}$ on the left-hand side, so for the remainder of the proof we assume $Y= \underlying{Y}$ is concentrated in arity one.
By definition the two sides of the purported isomorphism are
\[
\int^{\uc \in \listsc} X\vprof{\uc}{d} \otimes V^{\uc}_{\ml}
\quad{}\text{and}\quad{}
\int^{\uc\in \listsc} X\vprof{\uc}{d}\otimes Y^{\uc}(\ub),
\]
respectively (taking $Z_i = b_i$ in the specification of $V^{\uc}_{\ml}$ in Definition~\ref{defi: multilinear summand}),
so it suffices to show that $V^{\uc}_{\ml}\cong Y^{\uc}(\ub)$. 
We have
\[
V^{\uc}_{\ml}
\cong 
\coprod_{g:[m]\xrightarrow{\cong}[k]} \bigotimes_{j=1}^m (Y\cp b_{g(i)})_{c_j}
\cong 
\coprod_{g:[m]\xrightarrow{\cong}[k]}\bigotimes_{j=1}^m Y\vprof{b_{g(i)}}{c_j}
\]
which is isomorphic to $Y^{\uc}(\ub)$ by Lemma~\ref{lemma: Day power when Y is arity one}.
\end{proof}

This lemma enables the following identification.

\begin{lemma}
\label{multilinear summand of comparison is extension}
Let $(f,\phi):\mathcal{P}\to {\calQ}$ be a map of positive colored operads, and let $\ub$ be a positive length list in $\colorsb$ (the colors of $\calQ$).

Via the identifications of Lemmas~\ref{lemma: comparison map is homogeneous} and~\ref{lemma: if concentrated in arity one then the multilinear summand picks out the objects}, 
the value of the multilinear summand of the comparison map 
\[
{\coprod}^{\alg}\phi^*\freeq(b_i)\to \phi^*{\coprod}^{\alg}\freeq(b_i)
\]
in profile $\vprof{\,}{a}$ is naturally isomorphic to the component of the extension morphism
\[
\mathcal{P}\cp_{\underlying{\mathcal{P}}}(\bar{f} \cp \underlying{{\calQ}})
\to
\bar{f} \cp {\calQ} 
\]
in profile $\vprof{\ub}{a}$.
\end{lemma}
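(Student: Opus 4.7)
The plan is to take multilinear summands on both sides of the homogeneous coequalizer presentations of the comparison map supplied by Lemma~\ref{lemma: comparison map is homogeneous}, and then identify the resulting coequalizer of multilinear summands with the extension morphism. Because a homogeneous map respects the direct sum decomposition into multilinear and nonlinear summands, a coequalizer of a homogeneous parallel pair splits as the coproduct of the coequalizer of the multilinear parts and the coequalizer of the nonlinear parts; in particular the multilinear summand functor preserves such coequalizers.

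First I would apply Lemma~\ref{lemma: if concentrated in arity one then the multilinear summand picks out the objects} to the two parallel pairs from parts~(1) and~(2) of Lemma~\ref{lemma: comparison map is homogeneous}, evaluating everywhere at profile $\vprof{\,}{a}$. On the domain side, since $\mathcal{P}$, $\bar{f}$, and $\mathcal{Q}$ are positive, the underlying (arity-one) part of $\mathcal{P} \cp \bar{f} \cp \mathcal{Q}$ is $\underlying{\mathcal{P}} \cp \bar{f} \cp \underlying{\mathcal{Q}}$, and the multilinear summand of the presentation of $\coprod^{\alg} \phi^* \freeq(b_i)$ in profile $\vprof{\,}{a}$ becomes the coequalizer of
\[
(\mathcal{P} \cp \underlying{\mathcal{P}} \cp \bar{f} \cp \underlying{\mathcal{Q}})\vprof{\ub}{a} \rightrightarrows (\mathcal{P} \cp \bar{f} \cp \underlying{\mathcal{Q}})\vprof{\ub}{a}.
\]
A direct inspection of the two parallel maps in Lemma~\ref{lemma: comparison map is homogeneous}\eqref{homogeneity lemma item: diagram for coprod of pullback} shows that, after restriction to the arity-one parts of the middle $\mathcal{P}$ and $\mathcal{Q}$ factors, they reduce to the right action of $\underlying{\mathcal{P}}$ on $\mathcal{P}$ via $\mu_{\mathcal{P}}$ and the left action $\lambda_{\underlying{\phi}}$ of $\underlying{\mathcal{P}}$ on $\bar{f} \cp \underlying{\mathcal{Q}}$ from Lemma~\ref{lemma: left module structure}. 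Thus this coequalizer is precisely $\bigl(\mathcal{P} \cp_{\underlying{\mathcal{P}}} \bar{f} \cp \underlying{\mathcal{Q}}\bigr)\vprof{\ub}{a}$.

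Next, on the codomain side the same procedure turns the presentation of $\phi^* \coprod^{\alg} \freeq(b_i)$ into the coequalizer of
\[
(\bar{f} \cp \mathcal{Q} \cp \underlying{\mathcal{Q}} \cp \underlying{\mathcal{Q}})\vprof{\ub}{a} \rightrightarrows (\bar{f} \cp \mathcal{Q} \cp \underlying{\mathcal{Q}})\vprof{\ub}{a},
\]
with one map induced by $\mu_{\mathcal{Q}}$ restricted to the right-hand $\underlying{\mathcal{Q}}$ factor (the right $\underlying{\mathcal{Q}}$-action on $\mathcal{Q}$) and the other by $\mu_{\mathcal{Q}}$ on the two $\underlying{\mathcal{Q}}$ factors (the left $\underlying{\mathcal{Q}}$-action on $\underlying{\mathcal{Q}}$). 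This is the reflexive coequalizer computing $\bar{f} \cp \mathcal{Q} \cp_{\underlying{\mathcal{Q}}} \underlying{\mathcal{Q}}$, which by the unit-style calculation at the end of Notation~\ref{notation: cp sub Q} is canonically isomorphic to $(\bar{f} \cp \mathcal{Q})\vprof{\ub}{a}$.

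Finally, I would verify that the induced map of coequalizers is the extension morphism. Unwinding Lemma~\ref{lemma: comparison map is homogeneous}\eqref{homogeneity lemma item: comparison map}, the multilinear summand of the comparison map at the level of the right-hand objects of the two coequalizer diagrams is built by restricting the composite $\mathcal{P} \cp \bar{f} \cp \underlying{\mathcal{Q}} \to \bar{f} \cp \mathcal{Q} \cp \underlying{\mathcal{Q}}$ to its arity-one-on-the-right part and applying $\psi \cp \underlying{\mathcal{Q}}$ followed by $\bar{f} \cp \mu_{\mathcal{Q}}$; this is precisely $\lambda_\phi$ restricted along the inclusion $\bar{f} \cp \underlying{\mathcal{Q}} \hookrightarrow \bar{f} \cp \mathcal{Q}$. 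Descending to the coequalizers, this gives exactly the morphism produced in Lemma~\ref{lemma: extension morphism exists}, i.e., the extension morphism in profile $\vprof{\ub}{a}$. The main obstacle is the last bookkeeping step: one must carefully match the two parallel maps of the domain coequalizer with the ones defining $\cp_{\underlying{\mathcal{P}}}$, and check that the distributor and $\phi$-based construction of the comparison map agrees after passing to multilinear summands with the composite defining the extension morphism; once the correspondence is written out, however, it is a routine diagram chase, and naturality in all the relevant variables is automatic from Lemmas~\ref{lemma functoriality of multilinear summand} and~\ref{lemma: if concentrated in arity one then the multilinear summand picks out the objects}.
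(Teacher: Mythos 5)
Your proposal is correct and follows essentially the same route as the paper's proof: split the homogeneous coequalizer presentations from Lemma~\ref{lemma: comparison map is homogeneous} into multilinear and nonlinear parts, use Lemma~\ref{lemma: if concentrated in arity one then the multilinear summand picks out the objects} to identify the multilinear coequalizers with the defining coequalizer of $\mathcal{P}\cp_{\underlying{\mathcal{P}}}(\bar{f}\cp\underlying{{\calQ}})$ on one side and the collapsing coequalizer for $\bar{f}\cp{\calQ}$ on the other, and then match the induced map with the extension morphism. Your identification of the comparison map's multilinear part as $\psi\cp\underlying{{\calQ}}$ followed by $\bar{f}\cp\mu_{{\calQ}}$ (i.e., the restriction of $\lambda_\phi$) is exactly the paper's final step, just stated with slightly more detail.
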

\begin{proof}
For the domain, since the coequalizer diagram of Lemma~\ref{lemma: comparison map is homogeneous}\eqref{homogeneity lemma item: diagram for coprod of pullback} is homogeneous, the multilinear decomposition distributes through the coequalizer. 
Then by Lemma~\ref{lemma: if concentrated in arity one then the multilinear summand picks out the objects}, the multilinear summand of the domain in profile $\vprof{}{a}$ is the coequalizer of the diagram
\[
(\mathcal{P}\cp \underlying{\mathcal{P}}\cp \bar{f} \cp \underlying{{\calQ}})
\vprof{\ub}{a}
\rightrightarrows
(\mathcal{P}\cp \bar{f} \cp \underlying{{\calQ}})\vprof{\ub}{a},
\]
with maps induced by the action of $\underlying{\mathcal{P}}$ on $\bar{f}\cp\underlying{{\calQ}}$ and the composition $\mu_{\mathcal{P}}$ of the operad $\mathcal{P}$.
This is the domain of the extension morphism.

We can do a similar computation for the codomain using the coequalizer diagram from Lemma~\ref{lemma: comparison map is homogeneous}\eqref{homegeneity lemma item: diagram for pullback of coprod}. 
By Lemma~\ref{lemma: if concentrated in arity one then the multilinear summand picks out the objects} the multilinear summand in profile $\vprof{}{a}$ is the coequalizer of the diagram
\[
\bar{f}\cp {\calQ}\cp \underlying{{\calQ}}\cp \underlying{{\calQ}}\vprof{\ub}{a} 
\rightrightarrows 
\bar{f}\cp {\calQ}\cp \underlying{{\calQ}}\vprof{\ub}{a},
\]
with maps induced by $\mu_{{\calQ}}$. This coequalizer collapses to $\bar{f}\cp {\calQ}\vprof{\ub}{a}$, the codomain of the extension morphism.

By the naturality condition of Lemma~\ref{lemma: if concentrated in arity one then the multilinear summand picks out the objects}, the map between components of $\mathcal{P}\cp_{\underlying{\mathcal{P}}}(\bar{f} \cp \underlying{{\calQ}})$ and $\bar{f} \cp {\calQ}$ is induced by 
\[
\mathcal{P}\cp \bar{f} \cp \underlying{{\calQ}}
\xrightarrow{\phi}
\bar{f}\cp {\calQ}\cp\underlying{{\calQ}}
\]
followed by the collapse to the coequalizer, which is induced by $\mu_{{\calQ}}$.
This is the extension morphism.
\end{proof}

\begin{proof}[Proof of Lemma~\ref{lemma: reduced preserves finite coproducts}]
Since the presentation of the comparison morphism
\[
{\coprod}^{\alg}\phi^*\freeq(b_i)\to \phi^*{\coprod}^{\alg}\freeq(b_i)
\]
of Lemma~\ref{lemma: comparison map is homogeneous} is homogeneous, it respects the multilinear decomposition of that lemma and induces a map between the multilinear summands which was identified in Lemma~\ref{multilinear summand of comparison is extension} as the extension morphism.

By the hypotheses of Lemma~\ref{lemma: reduced preserves finite coproducts}, the comparison morphism is an isomorphism. 
Since coproducts are conservative, its multilinear summand, the extension morphism, is also an isomorphism. 
\end{proof}

\section{Sufficiency of the criterion}
\label{section: sufficient}
Recall that if $\mathcal{P}\xrightarrow{(f,\phi)}{\calQ}$ is any map of operads and $\mathcal{B}$ is a ${\calQ}$-algebra, then by Remark~\ref{remark: usual adjunction between algebras} we know that $\underlying{\phi}^*\mathcal{B} \cong \bar{f} \cp \mathcal{B}$ is not just a $\underlying{\mathcal{P}}$-algebra, but is actually a $\mathcal{P}$-algebra.
By Corollary~\ref{cor: underlying adjunction}, $\underlying{\phi}^*$ has a right adjoint $\underlying{\phi}_*$.
It is not always the case that there is a meaningful ${\calQ}$-algebra structure on $\underlying{\phi}_*\mathcal{A}$ when $\mathcal{A}$ is a $\mathcal{P}$-algebra.
Throughout this section, let \[ \mathcal{P}\xrightarrow{(f,\phi)}{\calQ}\] be a map of operads which is a categorical extension.

Our current task is to prove 
\begin{itemize}
\item Lemma~\ref{lemma: Q-algebra structure and functor}, which says that Construction~\ref{construction:right adjoint} gives a functor $\phi_*$ from $\mathcal{P}$-algebras to ${\calQ}$-algebras, lying over the functor $\underlying{\phi}_* = \threehoms{\underlying{\mathcal{P}}}{\bar{f} \cp \underlying{{\calQ}}}{-}$, and 
\item Theorem~\ref{theorem: sufficient}, which says that the adjunction 
\begin{equation*}
\underlying{\phi}^* = \bar{f} \cp \underlying{{\calQ}} \cp_{\underlying{{\calQ}}} (-)
\dashv 
\threehoms{\underlying{\mathcal{P}}}{\bar{f} \cp \underlying{{\calQ}}}{-} = \underlying{\phi}_*.
\end{equation*}
of Corollary~\ref{cor: underlying adjunction} between $\underlying{\mathcal{P}}$ and $\underlying{\mathcal{Q}}$-algebras lifts to an adjunction $\phi^*\dashv\phi_*$ between $\mathcal{P}$ and $\mathcal{Q}$-algebras.
\end{itemize}

\begin{remark}
\label{remark PL LQ thing}
Let \[ L = \underlying{\phi}^* = \bar{f}\cp\underlying{{\calQ}}\cp_{\underlying{{\calQ}}}(-) \cong \bar{f}\cp(-)\] be the functor from $\underlying{{\calQ}}$-algebras to $\underlying{\mathcal{P}}$-algebras, which is left adjoint to 
\[ R = \underlying{\phi}_* = \threehoms{\underlying{\mathcal{P}}}{\bar{f}\cp \underlying{{\calQ}}}{-}. \]
We have a natural transformation
\begin{equation}\label{eq nat trans for algebras}
	\mathcal{P} \cp_{\underlying{\mathcal{P}}} L(-) \Rightarrow L({\calQ}\cp_{\underlying{{\calQ}}}(-))
\end{equation}
which is given by the extension morphism
\[ \begin{tikzcd}
\mathcal{P} \cp_{\underlying{\mathcal{P}}} \bar{f} \cp \underlying{{\calQ}} \cp_{\underlying{{\calQ}}} \mathcal{B} \to \bar{f} \cp {\calQ} \cp_{\underlying{{\calQ}}} \mathcal{B}.
\end{tikzcd} \]
If $(f,\phi)$ is a categorical extension, then \eqref{eq nat trans for algebras} is an isomorphism of functors from $\underlying{{\calQ}}$-algebras to $\underlying{\mathcal{P}}$-algebras.
In particular, these have the same underlying objects.
\end{remark}
\begin{notation}
To make diagrams in the proofs in this section less busy, we omit the symbol $\cp$ (e.g.,\ $\mathcal{P}(-)$ means $\mathcal{P}\cp(-)$), write $\mathcal{P}\shortdot(-)$ for $\mathcal{P}\cp_{\underlying{\mathcal{P}}}(-)$, and write ${\calQ}\shortdot(-)$ for ${\calQ}\cp_{\underlying{{\calQ}}}(-)$.
We also follow the notation in Remark~\ref{remark PL LQ thing}, writing $R$ for the functor $\threehoms{\underlying{\mathcal{P}}}{\bar{f} \cp \underlying{{\calQ}}}{ -} = \underlying{\phi}_*$ and similarly for its left adjoint $L = \underlying{\phi}^*$.
Further, all functors should be interpreted as being evaluated on everything to the right, which we use to omit all parenthesization.
\end{notation}

For example, \eqref{eq nat trans for algebras} would be written, at a ${\calQ}$-algebra $\mathcal{B}$, as $\mathcal{P}\shortdot L\mathcal{B} \to L{\calQ}\shortdot \mathcal{B}$.

\begin{remark}
We omit the detailed verification that the maps we write down descend to the coequalizers made using $\mathcal{P}\shortdot(-)$ and ${\calQ}\shortdot(-)$. This essentially follows from the fact that everything in sight at least respects $\underlying{\mathcal{P}}$-algebra and $\underlying{{\calQ}}$-algebra structures. 
This includes, in particular, the unit $\eta$, the counit $\epsilon$, the operad composition maps $\mu_{\mathcal{P}}$ and $\mu_{{\calQ}}$, and the $\mathcal{P}$-algebra structure map $\lambda$.
\end{remark}

\begin{remark}\label{remark: induced structural maps}
Let us recast certain induced structures using the notation now available to us.
\begin{itemize}
\item Suppose that $\mathcal{B}$ is a ${\calQ}$-algebra.
The $\mathcal{P}$-algebra structure on $L\mathcal{B}$ from Remark~\ref{remark: usual adjunction between algebras} takes the form 
\[
\begin{tikzcd}
	\mathcal{P} L \mathcal{B} \rar & \mathcal{P} \shortdot L \mathcal{B} \rar & L{\calQ}\shortdot \mathcal{B} \rar{L\lambda} & L\mathcal{B}
\end{tikzcd}
\]
where the middle arrow comes from Remark~\ref{remark PL LQ thing}.
\item
Suppose that $\mathcal{A}$ is a $\mathcal{P}$-algebra.
The proposed ${\calQ}$-algebra structure on $R\mathcal{A}$ comes in two pieces.
\begin{itemize}
\item First, we have the composite from Remark~\ref{remark: adjoint formula for action}, which we write as $\hat\alpha$ below.
\[
\begin{tikzcd}[column sep=2.2em]
L{\calQ} R\mathcal{A} \rar & L{\calQ} \shortdot R\mathcal{A} & \mathcal{P} \shortdot L R \mathcal{A} \rar{\mathcal{P}\shortdot\epsilon} 
\arrow[l, dashed, color=red] 
& \mathcal{P} \shortdot \mathcal{A} \rar{\lambda} & \mathcal{A}.
\end{tikzcd}
\]
The dashed red arrow is \eqref{eq nat trans for algebras} from Remark~\ref{remark PL LQ thing}, which is an isomorphism by assumption.
\item Second, we have the adjunct of $\hat\alpha$, which we call $\alpha$, which is given by 
\[ \begin{tikzcd}
{\calQ} R\mathcal{A} \rar{\eta} & RL {\calQ} R\mathcal{A} \rar{R\hat\alpha} & R\mathcal{A}.
\end{tikzcd} \]
This is our proposed action of ${\calQ}$ on $R\mathcal{A}$ from Construction~\ref{construction:right adjoint}.
\end{itemize}
\end{itemize}
\end{remark}

We start by showing this indeed gives a lift of the functor $R = \underlying{\phi}_*$ to algebras. 
\begin{proof}[Proof of Lemma~\ref{lemma: Q-algebra structure and functor}]
We first address objects.
Let $\mathcal{A}$ be a $\mathcal{P}$-algebra.
We wish to show that $\alpha$ constitutes a ${\calQ}$-algebra structure on $R\mathcal{A}$.
The aim is to show that the diagram
\[ \begin{tikzcd}
{\calQ}{\calQ}R\mathcal{A} \rar{{\calQ} \alpha} \dar{\mu R\mathcal{A}} & {\calQ}R\mathcal{A} \dar{\alpha} \\
{\calQ}R\mathcal{A} \rar{\alpha} & R\mathcal{A}
\end{tikzcd} \]
commutes, or by adjointness that the diagram 
\begin{equation}
\label{adjoint diagram} \begin{tikzcd}
L{\calQ}{\calQ}R\mathcal{A} \rar{L{\calQ}\alpha} \dar{L\mu R\mathcal{A}} & L{\calQ}R\mathcal{A} \rar{L\alpha} \arrow[ddr, "\hat\alpha"] & LR\mathcal{A} \arrow[dd, "\epsilon"]\\
L{\calQ}R\mathcal{A} \dar{L\alpha} \arrow[drr, "\hat \alpha"] \\
LR\mathcal{A} \arrow[rr, "\epsilon"] & & \mathcal{A}
\end{tikzcd} \end{equation}
commutes.
We of course only need to show that the inner chamber of this latter diagram commutes.

The maps $\alpha$ and $\hat \alpha$ depend upon the inverse of the extension isomorphism $\mathcal{P} \cp_{\underlying{\mathcal{P}}} \underlying{{\calQ}} \to {\calQ}$.
As in Remark~\ref{remark: induced structural maps}, we will use dashed red arrows 
$\begin{tikzcd}[cramped] {} \rar[dashed, color=red] & {} \end{tikzcd}$
for maps coming from the natural isomorphism in Remark~\ref{remark PL LQ thing}.

For any $\underlying{{\calQ}}$-algebra $\mathcal{B}$, the diagram
\[ \begin{tikzcd}
\mathcal{P} \shortdot \mathcal{P} \shortdot L\mathcal{B}  \arrow[d, color=red, dashed] \rar{\mu_{\mathcal{P}}\shortdot L\mathcal{B}}& \mathcal{P} \shortdot L \mathcal{B} \arrow[dd, color=red, dashed] \\
\mathcal{P} \shortdot L{\calQ} \shortdot \mathcal{B} \arrow[d, color=red, dashed]  \\
L{\calQ} \shortdot {\calQ} \shortdot \mathcal{B}
\rar{L\mu_{{\calQ}}\shortdot \mathcal{B}} 
& L {\calQ} \shortdot \mathcal{B}
\end{tikzcd} \]
commutes, essentially by the second adjoint form of colored operad maps from Remark~\ref{remark: alt pres col op map}.
When $\mathcal{B} = R\mathcal{A}$, this forms the middle chamber in the following commutative diagram
\[ \begin{tikzcd}[column sep=small]
L{\calQ}{\calQ}R\mathcal{A} \arrow[rr,"L\mu R\mathcal{A}"] \dar &[-0.5em] &[+1.7em] L{\calQ}R\mathcal{A} \dar \arrow[drrr, dotted, color=OliveGreen, "\hat \alpha" swap, bend left=15] &[-0.5em] &[+1.3em] \\
L{\calQ}\shortdot{\calQ}R\mathcal{A} \rar & L{\calQ}\shortdot{\calQ}\shortdot R\mathcal{A} \rar{L\mu\shortdot R\mathcal{A}} & L{\calQ}\shortdot R\mathcal{A} & \mathcal{P}\shortdot  L R\mathcal{A} \arrow[l, dashed, color=red]  \rar{\mathcal{P}\shortdot\epsilon} & \mathcal{P}\shortdot\mathcal{A} \rar{\lambda} & \mathcal{A} \\
\mathcal{P}\shortdot L {\calQ}R\mathcal{A} \arrow[u, dashed, color=red] \rar & \mathcal{P} \shortdot L{\calQ}\shortdot R\mathcal{A} \arrow[u, dashed, color=red] &  & \mathcal{P}\shortdot \mathcal{P} \shortdot LR\mathcal{A} \arrow[ll, dashed, color=red] 
\arrow[u, "\mu\shortdot LR\mathcal{A}"] 
\rar{\mathcal{P}\shortdot\mathcal{P}\shortdot\epsilon} & \mathcal{P}\shortdot \mathcal{P}\shortdot \mathcal{A} \uar{\mu \shortdot\mathcal{A}} \rar{\mathcal{P}\shortdot\lambda} & \mathcal{P}\shortdot \mathcal{A} \uar{\lambda}
\end{tikzcd} \]
whose top composite $L{\calQ}{\calQ}R\mathcal{A} \to \mathcal{A}$ is the left-bottom composite of the inner chamber of \eqref{adjoint diagram}.
Here, all unlabeled solid arrows are induced by the relevant structural maps to coequalizer objects.
On the bottom row, two of the squares commute by naturality, while the one on the right commutes since $\lambda$ is an action.

On the other hand, we have a commutative diagram in Figure~\ref{figure: big comm diagram} in which the composite through the top right corner agrees with the composite through the bottom left corner in the previous diagram. 
Most squares commute by naturality, whereas the upper right triangle is a triangular identity for the pair of adjoint functors.
\begin{figure}
\[ \begin{tikzcd}[column sep=small]
L{\calQ}{\calQ}R\mathcal{A} \rar \dar{L{\calQ}\eta} \arrow[ddddd, bend right=65, "L{\calQ}\alpha" swap, near start, dotted, color=OliveGreen] &
L{\calQ}\shortdot{\calQ}R\mathcal{A} \dar{L{\calQ}\shortdot \eta} &
\mathcal{P}\shortdot L{\calQ}R\mathcal{A} \arrow[l, dashed, color=red] \arrow[dr, bend left=20, "=" swap]  \dar{\mathcal{P}\shortdot L\eta}
\\
L{\calQ}RL{\calQ}R\mathcal{A} \rar \dar &
L{\calQ}\shortdot RL{\calQ}R\mathcal{A} \dar &
\mathcal{P}\shortdot  LRL{\calQ}R\mathcal{A} \arrow[l, dashed, color=red] \rar{\mathcal{P}\shortdot\epsilon} \dar &
\mathcal{P}\shortdot L{\calQ}R\mathcal{A} \dar
\\
L{\calQ}RL{\calQ}\shortdot R\mathcal{A} \rar &
L{\calQ}\shortdot RL{\calQ}\shortdot R\mathcal{A} &
\mathcal{P}\shortdot LRL{\calQ}\shortdot R\mathcal{A} \arrow[l, dashed, color=red] \rar{\mathcal{P}\shortdot\epsilon} &
\mathcal{P}\shortdot L{\calQ}\shortdot R\mathcal{A} 
\\
L{\calQ}R\mathcal{P}\shortdot LR\mathcal{A} \arrow[u, dashed, color=red] \rar \dar{L{\calQ}R\mathcal{P}\shortdot\epsilon}&
L{\calQ}\shortdot R\mathcal{P}\shortdot LR\mathcal{A} \arrow[u, dashed, color=red] \dar{L{\calQ}\shortdot R\mathcal{P}\shortdot\epsilon} &
\mathcal{P}\shortdot LR\mathcal{P}\shortdot LR\mathcal{A} \arrow[l, dashed, color=red] \arrow[u, dashed, color=red] \dar{\mathcal{P}\shortdot LR\mathcal{P}\shortdot \epsilon} \rar{\mathcal{P}\shortdot\epsilon}&
\mathcal{P}\shortdot \mathcal{P} \shortdot LR\mathcal{A} \arrow[u, dashed, color=red] \dar{\mathcal{P}\shortdot \mathcal{P}\shortdot \epsilon}
\\
L{\calQ}R\mathcal{P}\shortdot \mathcal{A} \rar \dar{L{\calQ}R\lambda}  &
L{\calQ}\shortdot R\mathcal{P}\shortdot \mathcal{A} \dar{L{\calQ}\shortdot R\lambda} &
\mathcal{P}\shortdot LR \mathcal{P}\shortdot \mathcal{A} \arrow[l, dashed, color=red] \rar{\mathcal{P}\shortdot\epsilon} \dar{\mathcal{P}\shortdot LR\lambda} &
\mathcal{P}\shortdot \mathcal{P}\shortdot \mathcal{A} \dar{\mathcal{P}\shortdot \lambda}
\\
L{\calQ}R\mathcal{A} \rar \arrow[rrrd, bend right=10, "\hat \alpha", dotted, color=OliveGreen] &
L{\calQ}\shortdot R\mathcal{A} &
\mathcal{P} \shortdot LR\mathcal{A} \arrow[l, dashed, color=red] \rar{\mathcal{P}\shortdot\epsilon} &
\mathcal{P}\shortdot \mathcal{A} \dar{\lambda} \\& & & \mathcal{A}
\end{tikzcd} \]
\caption{Part of the proof of Lemma~\ref{lemma: Q-algebra structure and functor}}\label{figure: big comm diagram}
\end{figure}
The left-bottom composite of this diagram is the top-right composite of the inner chamber of \eqref{adjoint diagram}.
Thus \eqref{adjoint diagram} commutes.

Suppose that $g : \mathcal{A} \to \mathcal{A}'$ is a morphism of $\mathcal{P}$-algebras.
We know that $Rg$ is a morphism of $\underlying{{\calQ}}$-algebras, and we wish to show that this is a morphism of ${\calQ}$-algebras.
That is, the diagram on the left below should commute.
\[ \begin{tikzcd}
{\calQ}R\mathcal{A} \rar{{\calQ}Rg}  \dar{\alpha}& {\calQ}R\mathcal{A}' \dar{\alpha'} \\
R\mathcal{A} \rar{Rg} & R\mathcal{A}'
\end{tikzcd} 
\qquad
\begin{tikzcd}
L{\calQ}R\mathcal{A} \rar{L{\calQ}Rg}  \dar{\hat\alpha}& L{\calQ}R\mathcal{A}' \dar{\hat\alpha'} \\
\mathcal{A} \rar{g} & \mathcal{A}'
\end{tikzcd} 
\]
Of course the diagram on the left is adjoint to the diagram on the right.
Expanding out the definitions of $\hat\alpha$ and $\hat\alpha'$, we have the following diagram.
\[
\begin{tikzcd}
L{\calQ}R\mathcal{A} \rar{L{\calQ}Rg}  \dar \arrow[dddd, bend right=65, "\hat\alpha", dotted, color=OliveGreen] & L{\calQ}R\mathcal{A}' \dar \arrow[dddd, bend left=65, "\hat\alpha'" swap, dotted, color=OliveGreen] \\
L{\calQ}\shortdot R\mathcal{A} \rar{L{\calQ}\shortdot Rg}  & L{\calQ}\shortdot R\mathcal{A}' \\
\mathcal{P} \shortdot LR \mathcal{A} \arrow[u,color=red, dashed] \rar{\mathcal{P} \shortdot LRg} \dar{\mathcal{P} \shortdot \epsilon} & \mathcal{P} \shortdot LR \mathcal{A}' \arrow[u,color=red, dashed] \dar{\mathcal{P} \shortdot \epsilon} \\
\mathcal{P} \shortdot \mathcal{A} \rar{\mathcal{P} \shortdot g} \dar{\lambda_{\mathcal{A}}} & \mathcal{P} \shortdot \mathcal{A}' \dar{\lambda_{\mathcal{A}'}} \\
\mathcal{A} \rar{g} & \mathcal{A}'
\end{tikzcd} 
\]
The top three squares commute by naturality, and the bottom square commutes since $g$ is a map of $\mathcal{P}$-algebras.
\end{proof}

Suppose that $\mathcal{A}$ is a $\mathcal{P}$-algebra and $\mathcal{B}$ is a ${\calQ}$-algebra.
We have now established that the maps in Remark~\ref{remark: induced structural maps} indeed give a ${\calQ}$-algebra structure on $R\mathcal{A} = \underlying{\phi}_* \mathcal{A}$ and a $\mathcal{P}$-algebra structure on $L\mathcal{B} = \underlying{\phi}^* \mathcal{B}$.
We have a pair of functors 
\begin{equation*}
\begin{tikzcd}
\algebras{\mathcal{Q}}
 \rar["\phi^*", shift left]
& \algebras{\mathcal{P}}
 \lar["\phi_*", shift left]
\end{tikzcd}
\end{equation*}
and our goal is to show that they are adjoint.
In order to prove Theorem~\ref{theorem: sufficient}, it is enough to show that the unit and counit of the adjunction
\begin{equation*}
\begin{tikzcd}[column sep=tiny]
\algebras{\underlying{{\calQ}}}
 \ar[rr,"{\underlying{\phi}^*}", shift left=2.5]
&\bot &\algebras{\underlying{\mathcal{P}}}.
 \ar[ll,"\underlying{\phi}_*", shift left=2.5]
\end{tikzcd}
\end{equation*}
from Corollary~\ref{cor: underlying adjunction} are compatible with this additional structure.

\begin{lemma}
\label{lemma: counit calculation}
Let $\mathcal{A}$ be a $\mathcal{P}$-algebra, viewed by restriction as a $\underlying{\mathcal{P}}$-algebra.
The counit of the adjunction $\underlying{\phi}^* \dashv \underlying{\phi}_*$ at $\mathcal{A}$ is a map of $\mathcal{P}$-algebras from $\phi^*\phi_*\mathcal{A}$ to $\mathcal{A}$.
\end{lemma}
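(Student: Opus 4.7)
The plan is to show that the counit $\epsilon = \epsilon_\mathcal{A} : LR\mathcal{A} \to \mathcal{A}$ of the underlying adjunction $L \dashv R$ intertwines the two $\mathcal{P}$-actions on its domain and codomain: the given one on $\mathcal{A}$, and the one on $LR\mathcal{A} = \phi^*\phi_*\mathcal{A}$ coming from the ${\calQ}$-algebra structure on $R\mathcal{A}$ via the formula of Remark~\ref{remark: induced structural maps}. That formula says that, after descending to the coequalizer $\mathcal{P}\shortdot LR\mathcal{A}$, the $\mathcal{P}$-action on $LR\mathcal{A}$ is the composite
\[
\mathcal{P}\shortdot LR\mathcal{A} \xrightarrow{\xi} L{\calQ}\shortdot R\mathcal{A} \xrightarrow{L\lambda_{R\mathcal{A}}} LR\mathcal{A},
\]
where $\xi$ is the factorization isomorphism of Remark~\ref{remark PL LQ thing} and $\lambda_{R\mathcal{A}}$ is the descent of $\alpha$. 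So the lemma amounts to the commutativity of the outer rectangle
\[
\begin{tikzcd}
\mathcal{P}\shortdot LR\mathcal{A} \rar{\xi} \dar[swap]{\mathcal{P}\shortdot\epsilon} & L{\calQ}\shortdot R\mathcal{A} \rar{L\lambda_{R\mathcal{A}}} & LR\mathcal{A} \dar{\epsilon} \\
\mathcal{P}\shortdot\mathcal{A} \arrow[rr,"\lambda_\mathcal{A}" swap] & & \mathcal{A}.
\end{tikzcd}
\]

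I would split this rectangle into two triangles meeting along the diagonal $\overline{\hat\alpha} : L{\calQ}\shortdot R\mathcal{A} \to \mathcal{A}$, the descent of $\hat\alpha$ from Remark~\ref{remark: induced structural maps}. The lower-left triangle $\overline{\hat\alpha} \circ \xi = \lambda_\mathcal{A}\circ(\mathcal{P}\shortdot\epsilon)$ is literally the definition of $\hat\alpha$ read after coequalizing, so it is free. The upper-right triangle $\epsilon\circ L\lambda_{R\mathcal{A}} = \overline{\hat\alpha}$ is where the adjunction enters: since $\alpha = R\hat\alpha \circ \eta_{{\calQ}R\mathcal{A}}$ by definition of adjunct, applying $L$, postcomposing with $\epsilon$, and invoking naturality of $\epsilon$ together with the triangle identity $\epsilon\circ L\eta = \id$ yields $\epsilon\circ L\alpha = \hat\alpha$ in one line; descending both sides to $L{\calQ}\shortdot R\mathcal{A}$ is exactly the identity we need.

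The main obstacle, if there is one, is the bookkeeping required to verify that all the relevant maps descend through the various coequalizers implicit in $\shortdot$ so that $\overline{\hat\alpha}$ is well-defined and the upper-right triangle makes sense. That is essentially a non-issue, because every map in sight is $\underlying{\mathcal{P}}$- or $\underlying{{\calQ}}$-equivariant as appropriate, which is precisely the point of Lemma~\ref{lemma: extension morphism exists} and matches the suppression policy the authors have adopted throughout this section.
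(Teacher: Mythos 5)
Your proposal is correct and follows essentially the same route as the paper: the paper's proof is one large commutative diagram whose cells are exactly your two triangles, namely the definition of $\hat\alpha$ (your lower-left triangle) and the identity $\epsilon \circ L\alpha = \hat\alpha$ obtained from naturality of $\epsilon$ together with the triangle identity $\epsilon \circ L\eta = \id$ (your upper-right triangle), with descent to the coequalizers handled by the same equivariance remark you invoke.
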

\begin{proof}
We must show that the diagram
\begin{equation}\label{diagram counit calculation}
\begin{tikzcd}
\mathcal{P} LR \mathcal{A} \rar{\mathcal{P}\epsilon_A} \dar & \mathcal{P} \mathcal{A} \dar{\lambda}  \\
LR \mathcal{A} \rar{\epsilon_A}& \mathcal{A}
\end{tikzcd} \end{equation}
commutes.
The left-hand map utilizes the formula for the action $\mathcal{P}L\mathcal{B} \to \mathcal{B}$, where $\mathcal{B}$ is a ${\calQ}$-algebra.
This, in turn, relies on the ${\calQ}$-action, called $\alpha$, on $R\mathcal{A}$.
Refer to Remark~\ref{remark: induced structural maps} for both of these.

The bottom two squares of the following diagram commute by naturality, the rightmost cell commutes because the coequalizers are well-behaved, the triangle commutes by a triangular identity, and the top map is defined so that the odd-shaped upper chamber commutes.
\[ \begin{tikzcd}
L{\calQ} \shortdot R\mathcal{A} \dar{L\eta} \arrow[dr, "="] 
\arrow[dddd, bend right=65, "L\alpha" swap, near start, dotted, color=OliveGreen]
&& \mathcal{P} LR \mathcal{A} \ar{ll} \arrow[ddl]
\ar{ddd}{\mathcal{P}\epsilon}
\\
LR L{\calQ} \shortdot R\mathcal{A} \rar{\epsilon} & L{\calQ} \shortdot R\mathcal{A}
\\
LR \mathcal{P} \shortdot L R \mathcal{A} \rar{\epsilon} \dar{LR\mathcal{P}\shortdot \epsilon} \arrow[u, dashed, color=red]& \mathcal{P} \shortdot L R \mathcal{A} \dar{\mathcal{P}\shortdot\epsilon} \arrow[u, dashed, color=red] \\
LR \mathcal{P}\shortdot \mathcal{A} \dar{LR\lambda}\rar{\epsilon}
&
\mathcal{P}\shortdot\mathcal{A}\dar{\lambda}
&\mathcal{P}\mathcal{A}\ar{dl}{\lambda}
\\
LR \mathcal{A} \rar{\epsilon} & \mathcal{A}
\end{tikzcd} \]
The composite (upper-right to lower-left corners) from $\mathcal{P} LR \mathcal{A}$ to $LR \mathcal{A}$ is the left-hand map from \eqref{diagram counit calculation}.
Thus commutativity of \eqref{diagram counit calculation} follows from this commutative diagram.
\end{proof}

\begin{lemma}
\label{lem: unit calculation}
Let $\mathcal{B}$ be a ${\calQ}$-algebra, viewed by restriction as a $\underlying{{\calQ}}$-algebra.
The unit of the adjunction $\underlying{\phi}^* \dashv \underlying{\phi}_*$ at $\mathcal{B}$ is a map of ${\calQ}$-algebras from $\mathcal{B}$ to $\phi_*\phi^*\mathcal{B}$.
\end{lemma}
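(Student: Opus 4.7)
The plan is to follow the template established in the proof of Lemma~\ref{lemma: counit calculation}: I would pass to adjuncts under the adjunction $\underlying{\phi}^* \dashv \underlying{\phi}_*$, turning the claim that $\eta_{\mathcal{B}} : \mathcal{B} \to RL\mathcal{B}$ is a ${\calQ}$-algebra morphism into an equality of two maps $L{\calQ}\mathcal{B} \to L\mathcal{B}$. Specifically, the condition $\alpha_{L\mathcal{B}} \circ {\calQ}\eta = \eta \circ \lambda_{\mathcal{B}}$ translates under the adjunction to the equality $\hat\alpha_{L\mathcal{B}} \circ L{\calQ}\eta = L\lambda_{\mathcal{B}}$, where on the right-hand side a triangular identity has already collapsed $\epsilon_{L\mathcal{B}} \circ L\eta_{\mathcal{B}}$ to the identity, and on the left-hand side $\hat\alpha_{L\mathcal{B}}$ denotes the transpose of $\alpha_{L\mathcal{B}}$ as in Remark~\ref{remark: induced structural maps}.

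The next step is to unpack $\hat\alpha_{L\mathcal{B}}$ using the formula from Remark~\ref{remark: induced structural maps} with $\mathcal{A} = L\mathcal{B}$, yielding the composite
\[
L{\calQ}\mathcal{B} \xrightarrow{L{\calQ}\eta} L{\calQ}RL\mathcal{B} \to L{\calQ}\shortdot RL\mathcal{B} \xleftarrow{\cong} \mathcal{P}\shortdot LRL\mathcal{B} \xrightarrow{\mathcal{P}\shortdot\epsilon} \mathcal{P}\shortdot L\mathcal{B} \xrightarrow{\lambda_{L\mathcal{B}}} L\mathcal{B}.
\]
I would then use naturality of the factorization isomorphism $\mathcal{P}\shortdot L(-) \xrightarrow{\cong} L{\calQ}\shortdot(-)$ from Remark~\ref{remark PL LQ thing}, applied to $\eta_{\mathcal{B}}: \mathcal{B} \to RL\mathcal{B}$ viewed as a map of $\underlying{{\calQ}}$-algebras, to pull the application of $\eta$ through the extension isomorphism. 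A second triangular identity $\epsilon_{L\mathcal{B}} \circ L\eta_{\mathcal{B}} = \id$ then collapses the portion $\mathcal{P}\shortdot LRL\mathcal{B} \to \mathcal{P}\shortdot L\mathcal{B}$ to the identity on $\mathcal{P}\shortdot L\mathcal{B}$.

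After these simplifications the composite reduces to $\lambda_{L\mathcal{B}}$ precomposed with $L{\calQ}\mathcal{B} \to L{\calQ}\shortdot \mathcal{B} \xleftarrow{\cong} \mathcal{P}\shortdot L\mathcal{B}$. Finally, the very definition of $\lambda_{L\mathcal{B}}$ in Remark~\ref{remark: induced structural maps} says that $\lambda_{L\mathcal{B}}$ equals $L\lambda_{\mathcal{B}}$ precomposed with the inverse extension isomorphism $\mathcal{P}\shortdot L\mathcal{B} \xleftarrow{\cong} L{\calQ}\shortdot \mathcal{B}$, which cancels the intermediate iso and leaves $L\lambda_{\mathcal{B}}$ as required.

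I expect the main obstacle, exactly as in the proof of Lemma~\ref{lemma: counit calculation}, to be bookkeeping rather than conceptual: organizing the various commutative cells arising from naturality of $\epsilon$, naturality of the extension isomorphism, and the two triangular identities into a single readable diagram of the sort appearing as Figure~\ref{figure: big comm diagram}. Conceptually the argument is dual to the counit calculation, the key observation being that $\hat\alpha_{L\mathcal{B}} \circ L{\calQ}\eta$ is engineered, via a triangular identity, to compute the $\mathcal{P}$-action on $L\mathcal{B}$, so that its transpose is nothing but the statement that $\eta_{\mathcal{B}}$ intertwines the ${\calQ}$-actions.
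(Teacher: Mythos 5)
Your proposal is correct and follows essentially the same route as the paper's proof: pass to the adjunct triangle $\hat\alpha_{L\mathcal{B}}\circ L({\calQ}\shortdot\eta)=L\lambda_{\mathcal{B}}$, expand $\hat\alpha_{L\mathcal{B}}$, apply naturality of the extension isomorphism to $\eta_{\mathcal{B}}$, invoke the triangle identity to cancel $\mathcal{P}\shortdot\epsilon\circ\mathcal{P}\shortdot L\eta$, and finish with the definition of the $\mathcal{P}$-action $\lambda_{L\mathcal{B}}$. These are exactly the three chambers of the diagram the authors draw, so no gaps remain beyond the bookkeeping you already anticipate.
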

\begin{proof}
As we already know that the unit is a map of $\underlying{{\calQ}}$-algebras, it is sufficient to show that the diagram
\[ \begin{tikzcd}
{\calQ} \shortdot \mathcal{B} \rar{{\calQ} \shortdot \eta}\dar{\lambda} & {\calQ} \shortdot RL \mathcal{B} \dar{\alpha} \\
\mathcal{B} \rar{\eta} & RL \mathcal{B}
\end{tikzcd} \]
commutes, where the map on the right is the induced action coming from the $\mathcal{P}$-action on $L\mathcal{B}$.
By adjointness, this is equivalent to showing that the triangle
\[ \begin{tikzcd}[column sep=large]
L({\calQ} \shortdot \mathcal{B}) \rar{L({\calQ} \shortdot \eta)} \dar{L\lambda_B}&
L({\calQ} \shortdot RL \mathcal{B}) \arrow[dl] \\
L\mathcal{B}
\end{tikzcd} \]
commutes, where the diagonal map is induced from $\hat\alpha$.
Expanding this slightly, we have the following:
\[ \begin{tikzcd}[column sep=large]
L({\calQ} \shortdot \mathcal{B}) \rar{L({\calQ} \shortdot \eta)} 
\arrow[dd, bend right=50, "L\lambda_{\mathcal{B}}" swap]
& L({\calQ} \shortdot RL \mathcal{B}) \\
\mathcal{P} \shortdot L\mathcal{B} \rar{\mathcal{P} \shortdot L\eta}  \arrow[u, dashed, color=red] \arrow[dr, "="] & 
\mathcal{P} \shortdot LRL\mathcal{B} \arrow[u, dashed, color=red] \dar{\mathcal{P} \shortdot \epsilon} \\
L\mathcal{B} & 
P\shortdot L \mathcal{B} \lar{\lambda_{L\mathcal{B}}}
\end{tikzcd} \]
The bottom left chamber is just the definition of the $\mathcal{P}$-action on $L\mathcal{B}$, while the other two chambers commute automatically.
The composition from upper right to bottom left is induced from $\hat\alpha$, so we have shown that the triangle we want to commute does commute.
\end{proof}
\begin{proof}[Proof of Theorem~\ref{theorem: sufficient}]
In light of Lemma~\ref{lemma: counit calculation} and Lemma~\ref{lem: unit calculation}, we know that the unit and counit of the adjunction $\underlying{\phi}^* \dashv \underlying{\phi}_*$ lift to $\algebras{\mathcal{Q}}$ and $\algebras{\mathcal{P}}$.
Since the functors $\algebras{{\calQ}} \to \algebras{\underlying{{\calQ}}}$ and $\algebras{\mathcal{P}} \to \algebras{\underlying{\mathcal{P}}}$ are faithful, naturality and the triangle identities follow from the corresponding properties for $\underlying{\phi}^* \dashv \underlying{\phi}_*$.
\end{proof}
\begin{remark}
This proof, at the current (colored and categorical) level of generality, is a bit abstract. 
It is a diverting exercise to verify the triangle identities by hand in, say, the case of monochrome operads in sets.
\end{remark}

\appendix
\section{Examples of colored operads}
\label{appendix: examples of colored operads}
Colored operads which describe various types of generalized operads are built on the notion of \emph{graphs with loose ends}, which we will just call \emph{graphs} in what follows.
In such graphs, edges need not be attached to anything at one or both edges, or may be attached to themselves, forming a circle.
A picture is instructive, and we have included one in Figure~\ref{figure: loose ends}.
Any such graph consists of
\begin{itemize}
	\item A finite set of vertices $V$.
	\item A finite set of edges $E$.
	\item For each vertex $v$, a set $\nbhd(v)$ of germs of edges at the vertex; there is a function $\coprod_{v\in V} \nbhd(v) \to E$ whose fibers have cardinality less than three.
	\item A boundary set $B$, equipped with a function $B \to E$ whose fibers have cardinality less than three.
\end{itemize}
The only additional condition to be a graph is that each fiber of $B \amalg \coprod_{v\in V} \nbhd(v) \to E$ has cardinality either zero or two.
If $e$ is an edge such that this fiber is empty, we regard $e$ as being like a circle, while if two elements of $B$ map to $e$, we regard $e$ as being like an interval.
Alternative presentations of such graphs may be found in \cite[Definition~13.1]{BataninBerger:HTAOPM}, \cite[Definition~1.1]{HackneyRobertsonYau:MONT}, and \cite[Section~1.2]{YauJohnson:FPAM}.
There is a realization functor from graphs to topological spaces whose details we omit; when we refer to topological properties of a graph we always implicitly use this functor.
In what follows we always restrict to connected graphs.
An \emph{ordered} graph is a graph $G$ with the following additional structure:
\begin{itemize}
	\item A total ordering on the set of vertices $V$.
	\item A total ordering on each $\nbhd(v)$.
	\item A total ordering on the boundary $B$.
\end{itemize}

\begin{figure}
\includegraphics{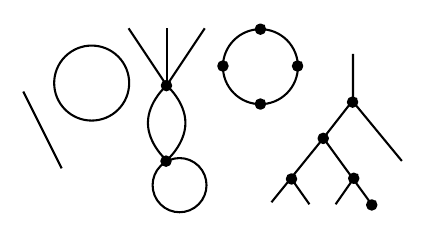}
\caption{A sample (non-connected) graph with loose ends and ten-element boundary set.}
\label{figure: loose ends}
\end{figure}

The following is a special case of \cite[\S4.5]{Raynor:CSMPL}, when $C$ is a point.
Raynor's term `CSM' refers to what we call `modular operad' in this paper, whereas what Raynor calls a `modular operad' we would call a `non-unital modular operad'.

\begin{example}[Modular operads]\label{example modular}
Let $\coloredopfont{M}$ be the $\mathbb{N}$-colored collection whose elements are isomorphism classes of ordered graphs.
Specifically, an element of $\coloredopfont{M}(k_1, \dots, k_n; p)$ will be represented by an ordered graph $G$ with $n$ vertices $\{v_1, \dots, v_n\}$ so that $\nbhd(v_j)$ has cardinality $k_j$ and $B(G)$ has cardinality $p$.
There is a function 
\[
	\coloredopfont{M}(k_1, \dots, k_n; p) \times \prod_{j=1}^n \coloredopfont{M}(\ell_{j,1}, \dots, \ell_{j,m_j}; k_j) \to \coloredopfont{M}(\ell_{1,1}, \dots, \ell_{n,m_n}; p)
\]
which replaces the $v_j\in V(G)$ with a graph $H_j$ with the gluing specified by the unique ordered bijection $B(H_j) \to \nbhd(v_j)$.
This type of graph substitution is both unital (with respect to corollas) by \cite[Lemma 5.31]{YauJohnson:FPAM} and associative by \cite[Theorem 5.32]{YauJohnson:FPAM}, hence $\coloredopfont{M}$ is a colored operad.
Algebras over $\coloredopfont{M}$ are a kind of modular operad. 

The underlying category of $\coloredopfont{M}$ consists of ordered graphs with precisely one vertex. 
Each edge will either be loop at the vertex, or be connected at one end to $v$.
Then an equivalent presentation of this category has objects $\mathbb{N}$ and morphisms from $k$ to $p$ consisting of the data:
\begin{enumerate}
\item an involution $\iota$ on $[k]$ with precisely $p$ fixed points (the boundary edges) and $\frac{k-p}{2}$ free orbits (the loops), and
\item a bijection between the fixed points of $\iota$ with $[p]$.
\end{enumerate}
In particular the set of morphisms is nonempty if and only if $k\ge p$ and $k\equiv p\pmod 2$.
\end{example}

The following example can be recovered from the Feynman category of \cite[\S2.3.3]{KaufmannWard:FC} using the biequivalence of \cite[Theorem 5.16]{BataninKockWeber:RPSFCO}.

\begin{example}[A genus-aware version]\label{example modular genus}
We also have $\coloredopfont{M}^{\mathrm{g}}$, which is a $\mathbb{N}^2$-colored collection defined as follows.
Let $\coloredopfont{M}^{\mathrm{g}}((k_1,g_1),\ldots, (k_n,g_n);(p,g))$ be the set of isomorphism classes of ordered graphs as before but where the graph $G$ is restricted to have first betti number $g-\sum g_j$. 
That is, operations of $\coloredopfont{M}^{\mathrm{g}}$ are \emph{genus-decorated graphs}.
The composition map defined for $\coloredopfont{M}$ respects genus appropriately, making $\coloredopfont{M}^{\mathrm{g}}$ an operad.
There is a map of operads $\coloredopfont{M}^{\mathrm{g}}$ to $\coloredopfont{M}$ which on color sets is projection on the first factor, $(k,g)\mapsto k$.
In the underlying category of $\coloredopfont{M}^{\mathrm{g}}$, the set of morphisms from $(k,g)$ to $(p,h)$ is given by $\underlying{\coloredopfont{M}}(k;p)$ when $k-p = 2(h-g)$, while in other cases it is empty.
\end{example}

\begin{example}[Cyclic operads]\label{example cyclic operads}
Let $\coloredopfont{C}$ be the suboperad of $\coloredopfont{M}$ with the same color set consisting of ordered graphs which are simply-connected.
Algebras over $\coloredopfont{C}$ are a variant of cyclic operads. 
They are slightly more general than the cyclic operads of~\cite{GetzlerKapranov:COCH} because they contain ``constants'' in level $0$ and two ``elements'' in level $1$ can be paired to give a constant. 

We can define a further suboperad $\coloredopfont{C}_{\mathrm{GK}}$ recovering Getzler and Kapranov's cyclic operads precisely. This suboperad has colors the positive integers, and its elements have the additional restriction that the boundary set $B$ is nonempty.
A version of $\coloredopfont{C}_{\mathrm{GK}}$ appeared in \cite[\S1.6.4]{Lukacs:CODSHC}.

In both cases, the underlying category is a disjoint union of symmetric groups. 
That is, the morphisms between different colors are empty, while the endomorphisms of $n$ are the symmetric group $\Sigma_n$.
\end{example}

\begin{example}[Operads]\label{example operad for operads}
We further restrict $\coloredopfont{C}_{\mathrm{GK}}\subseteq \coloredopfont{M}$ to give an operad $\coloredopfont{O}$ governing monochrome operads. 
This is a variant of the description of~\cite[\S1.5.6]{BergerMoerdijk:RCORHA}. 
Our presentation is slightly more complicated but has the virtue of having a direct relationship with $\coloredopfont{C}$ and $\coloredopfont{M}$.
See also \cite[\S1.2]{DehlingVallette:SHTO} and \cite[\S14.1]{YauJohnson:FPAM}. 

Suppose that $G$ is an ordered graph in $\coloredopfont{C}_{\mathrm{GK}}$, that is, suppose that $G$ is a tree with at least one boundary element.
There is a unique edge flow in the direction of the first element of $B(G)$, which we call the root.
That is, we have a partial order with the root as the minimal element.
This allows us to declare that the root of a vertex $v$ is the element of $\nbhd(v)$ that is nearest to the global root.
We call $G$ a \emph{rooted tree} just when, for each $v$, the root of $v$ is also the minimal element of $\nbhd(v)$.
We declare that $\coloredopfont{O} \subseteq \coloredopfont{C}_{\mathrm{GK}}$ is the collection of all rooted trees. Algebras over $\coloredopfont{O}$ are operads. 

The underlying category is again a disjoint union of symmetric groups. But in the underlying category of $\coloredopfont{O}$, the endomorphisms of $n$ are the symmetric group $\Sigma_{n-1}$ (the root remains fixed).

Let us give a derived example.
There is a suboperad $\coloredopfont{O}_{\mathrm{ns}}$ with colors again the positive integers, but fewer operations in most profiles.
Namely, for a rooted tree to be in $\coloredopfont{O}_{\mathrm{ns}}(k_1,\dots, k_n; p)$, the orderings on $B$ must be compatible with the orderings on each $\nbhd(v)$.
Precisely, suppose that $a_1$ and $a_2$ are elements of $\nbhd(v)$ and $b_1$ and $b_2$ are elements of $B$ so that the image of $b_i$ in $E$ is greater than or equal to the image of $a_i$ in the partial order on $E$.
Compatibility means that if $a_1 < a_2$ in the total ordering on $\nbhd(v)$, then $b_1 < b_2$ in the total ordering on $B$.
Algebras over $\coloredopfont{O}_{\mathrm{ns}}$ are nonsymmetric operads.

The underlying category of $\coloredopfont{O}_{\mathrm{ns}}$ has only the identity in each color because, for a graph with one vertex, the compatibility condition forces the orders on $B \cong E$ and $\nbhd(v) \cong E$ to coincide.
This version was studied by van der Laan~\cite{VanderLaan:CKDSHO}.

Our convention for the colors of $\coloredopfont{O}$ are shifted by one from all conventions in the literature. 
We make this nonstandard choice because we are interested in the comparison with $\coloredopfont{C}$ and $\coloredopfont{M}$ where this shift is natural.
\end{example}
The operad $\coloredopfont{O}$ has operations given by rooted trees.
One can imagine analogous operads whose operations are other kinds of directed graphs and whose algebras are dioperads, properads, wheeled operads, and so on.
A general construction of such operads is included in Section 14.1 of \cite{YauJohnson:FPAM}, so we will omit further details here.

\begin{example}[Colored variants]\label{example colored variants}
Given a set $\colorsa$ of colors, one can form an operad $\coloredopfont{O}^\colorsa$ whose set of colors is $\coprod_{n \geq 1} \colorsa^{\times n}$ and whose operations are isomorphism classes of ordered rooted trees equipped with a function from the set of edges to the set $\colorsa$.
Algebras over this $\coprod_{n \geq 1} \colorsa^{\times n}$-colored operad are precisely $\colorsa$-colored operads.
When $\colorsa$ is a point, one recovers the operad $\coloredopfont{O}$.
The underlying category of $\coloredopfont{O}^\colorsa$ is a groupoid of positive length lists of elements of $\colorsa$.

This same pattern extends in a straightforward way to other types of directed graphs, and actually falls under the general construction of \cite[\S14.1]{YauJohnson:FPAM}.
For operadic structures built on undirected graphs, like cyclic operads and modular operads, one has the flexibility to work with an involutive set of colors $\colorsa$.
The main difference is that the coloring function $E\to \colorsa$ should be replaced with an involutive function from the involutive set of oriented edges to $\colorsa$. See, e.g.,~\cite[\S2]{DrummondColeHackney:DKHCO},~\cite{JoyalKock:FGNTCSM},~\cite[\S4.5]{Raynor:CSMPL}, and~\cite[\S2]{HackneyRobertsonYau:MONT} for implementations of this involutive perspective.
\end{example}

\bibliographystyle{amsalpha}
\bibliography{references-2018}
\end{document}